\newcommand{\no}[1]{#1}
\renewcommand{\no}[1]{}
\renewcommand{\Delta}{\upDelta}}
\date{\today}
\newtheorem{Thm}{Theorem}[section]
\newtheorem{prop}{Proposition}[section]
\newtheorem{lem}{Lemma}[section]
\newtheorem{corollary}{Corollary}[section]
\theoremstyle{remark}
\newcommand{\bel}{\begin{equation} \label}
\newcommand{\ee}{\end{equation}}
\newcommand{\pd}{\partial}
\newcommand{\R}{{\mathbb R}}
\newcommand{\N}{{\mathbb N}}
\def\phi {\varphi}
\renewcommand{\leq}{\leqslant}
\renewcommand{\geq}{\geqslant}
\def\beq{\begin{equation}}
\def\eeq{\end{equation}}
\newcommand{\bea}{\begin{eqnarray}}
\newcommand{\eea}{\end{eqnarray}}
\newcommand{\beas}{\begin{eqnarray*}}
\newcommand{\eeas}{\end{eqnarray*}}
\providecommand{\abs}[1]{\left\lvert#1\right\rvert}
\providecommand{\norm}[1]{\left\lVert#1\right\rVert}
\numberwithin{equation}{section}
\title[An inverse problem for a quasilinear convection--diffusion equation]
{An inverse problem for a quasilinear convection--diffusion equation}
\author[A. Feizmohammadi]{Ali Feizmohammadi}
\address{The Fields Institute for Research in Mathematical Sciences, Toronto, Ontario, M5T 3J1, Canada}
\email{afeizmoh@fields.utoronto.ca}
\author[Y. Kian]{Yavar Kian}
\address{Aix Marseille Univ, Universit\'{e} de Toulon, CNRS, CPT, Marseille, France}
\email{yavar.kian@univ-amu.fr}
\author[G. Uhlmann]{Gunther Uhlmann}
\address
       {G. Uhlmann, Department of Mathematics\\
       University of Washington\\
       Seattle, WA  98195-4350\\
       USA\\
        and Institute for Advanced Study of the Hong Kong University of Science and Technology}
\email{gunther@math.washington.edu}
\subjclass[2010]{Primary: 35R30, Secondary:}
\begin{document}

\begin{abstract}
We study the inverse problem of recovering a semilinear diffusion term $a(t,\lambda)$ as well as a quasilinear convection term $\mathcal B(t,x,\lambda,\xi)$ in a nonlinear parabolic equation 
$$\partial_tu-\textrm{div}(a(t,u) \nabla u)+\mathcal B(t,x,u,\nabla u)\cdot\nabla u=0,  \quad \mbox{in}\ (0,T)\times\Omega,$$ 
given the knowledge of the flux of the moving quantity associated with different sources applied at the boundary of the domain. This inverse problem that is modeled by the solution dependent parameters $a$ and $\mathcal B$ has many physical applications related to various classes of cooperative interactions or complex mixing in diffusion processes. Our main result states that, under suitable assumptions, it is possible to fully recover the nonlinear diffusion term $a$ as well as the nonlinear convection term $\mathcal B$.  The recovery of the diffusion term is based on the idea of solutions to the linearized equation with singularities near the boundary $\partial \Omega$. Our proof of the recovery of the convection term is based on the idea of higher order linearization to reduce the inverse problem to a density property for certain anisotropic products of solutions to the linearized equation. We show this density property by constructing sufficiently smooth geometric optic solutions concentrating on rays in $\Omega$.
\end{abstract}
\maketitle


\section{Introduction}

Let $T>0$ and let $\Omega\subset \R^n$ with $n\geq 2$ be a bounded domain with a smooth boundary. We denote by $\nu(x)$ the outward unit normal to $\partial\Omega$ computed at $x \in \partial\Omega$.  Then, for $\lambda\in\R$,  we introduce the initial boundary value problem (IBVP in short)
\bel{eq1}
\left\{
\begin{array}{ll}
\partial_tu-\textrm{div}(a(t,u) \nabla u)+\mathcal B(t,x,u,\nabla u)\cdot\nabla u=0  & \mbox{in}\ (0,T)\times\Omega:=M ,
\\
u=\lambda+f &\mbox{on}\ (0,T)\times\partial\Omega:=\Sigma,\\
u(0,x)=\lambda &\ x\in\Omega.
\end{array}
\right.
\ee
Throughout this paper, we make the standing assumption that the nonlinear diffusion term $a\in\mathcal C^\infty([0,T]\times\R)$ satisfies
\bel{cond1}a(t,\lambda)>0,\quad (t,\lambda)\in[0,T]\times\R,\ee
and that the nonlinear convection term $\mathcal B\in\mathcal C^\infty([0,T]\times\overline{\Omega}\times\R\times\R^n)^n$
satisfies
\bel{form}\mathcal B(t,x,\tau,\xi)=b(t,x,\tau,\xi)\,B(t,x,\tau),\quad (t,x)\in (0,T)\times\Omega,\ (\tau,\xi)\in\R\times\R^n\ee
for some $B\in \mathcal C^\infty([0,T]\times \overline{\Omega}\times\R)^n$ and a scalar function $b\in \mathcal C^\infty([0,T]\times\overline{\Omega}\times\R\times\R^n)$ that satisfies
\bel{form1} 
b(t,x,\tau,0)=1.
\ee

From now on, we fix $\alpha\in(0,1)$ and we denote   by
$\mathcal C^{\frac{\alpha}{2},\alpha}([0,T]\times X)$, with $X=\overline{\Omega}$ or $X=\partial\Omega$,  the set of functions $h$ lying in  $ \mathcal C([0,T]\times  X)$ satisfying
$$[h]_{\frac{\alpha}{2},\alpha}=\sup\left\{\frac{|h(t,x)-h(s,y)|}{(|x-y|^2+|t-s|)^{\frac{\alpha}{2}}}:\ (t,x),(s,y)\in [0,T]\times X,\ (t,x)\neq(s,y)\right\}<\infty.$$
Then we define the space $\mathcal C^{1+\frac{\alpha}{2},2+\alpha}([0,T]\times X)$ as the set of functions $h$ lying in 
$$\mathcal C([0,T];\mathcal C^2(X))\cap \mathcal C^1([0,T];\mathcal C(X))$$ 
such that $$\partial_th,\partial_x^\beta h\in \mathcal C^{\frac{\alpha}{2},\alpha}([0,T]\times X),\quad \beta\in(\mathbb N\cup\{0\})^n,\ |\beta|=2.$$
We consider on these spaces the usual norms and we refer to \cite[pp. 4]{Ch} for more details.
 We introduce the space
$$\mathcal K_0:=\{h\in \mathcal C^{1+\frac{\alpha}{2},2+\alpha}([0,T]\times\partial\Omega):\ h(0,\cdot)=\partial_th(0,\cdot)= 0\}$$
and for all $r>0$ we denote by $\mathbb B_r$ the ball of center zero and of radius $r$ of the space $\mathcal K_0$.

As we will show in Proposition~\ref{TA1}, given any $\lambda\in\R$ there exists $\epsilon=\epsilon_{a,\mathcal B,\lambda}>0$, depending on  $a$, $\mathcal B$, $\lambda$, $\Omega$, $T$, such that, for $f\in \mathbb B_\epsilon$,  \eqref{eq1} admits a unique solution $u_\lambda\in\mathcal C^{1+\alpha/2,2+\alpha}([0,T];\mathcal C(\overline{\Omega}))$ that lies in a sufficiently small neighborhood of $\lambda$. We can define the parabolic Dirichlet-to-Neumann map
$$\mathcal N_{\lambda,a,\mathcal B}:\mathbb B_{\epsilon}\ni f\mapsto a(t,u_\lambda)\partial_\nu u_{\lambda}(t,x),\quad (t,x)\in(0,T)\times\partial\Omega.$$
Here the map $\mathcal N_{\lambda,a,\mathcal B}$ sends any small boundary source $\lambda+f$ located on the lateral boundary $(0,T)\times\partial\Omega$ to the associated measurement of the flux given by $a(t,u_\lambda)\partial_\nu u_{\lambda}$ that is measured  also on the lateral boundary. In this sense, the knowledge of the map $\mathcal N_{\lambda,a,\mathcal B}$ is equivalent to the knowledge of the flux for all possible Dirichlet excitation of the system on a neighborhood of the constant function $\lambda$.

Our inverse problem can now be posed as follows: Can we recover the nonlinear diffusion term $a$ and the  nonlinear convection term $\mathcal B$, given the knowledge of the parabolic Dirichlet-to-Neumann map $\mathcal N_{\lambda,a,\mathcal B}$ for all $\lambda\in \R$?  
\subsection{Motivations}
Let us recall that the equation in \eqref{eq1} can be associated with different class of nonlinear equations including nonlinear Fokker--Planck equations, nonlinear model of convection-diffusion equations and   multidimentional formulation of  generalized viscous Burgers' equations. Each of these equations are associated with different physical phenomenon. For instance, nonlinear Fokker--Planck equations of the form \eqref{eq1} have applications in various fields such as plasma physics, surface physics, astrophysics, physics of polymer fluids and particle beams, nonlinear hydrodynamics, population dynamics, human movement sciences and neurophysics. Here the fundamental physical mechanism  arises from cooperative interactions between the subsystems of many-body systems which leads to models described by nonlinear equations (see e.g. \cite{F}). In the same way, nonlinear model of convection-diffusion equations of the form \eqref{eq1} can describe the transfer of physical quantities whose concentration is given by  the solution of \eqref{eq1}. In this context, the nonlinearity of the equation \eqref{eq1} describes models where the diffusivity $a$ and the velocity field $\mathcal B$ depend on the concentration of the moving quantities. Such phenomena may occur in the context of complex mixing phenomena such as the Rayleigh--B\'enard convection where the velocity field depends on the temperature. We mention also that the equation \eqref{eq1} can be seen as a  multidimentional formulation of a generalized viscous Burgers' equation modeling several physical phenomena in fluid mechanics and gas dynamics. Finally, we mention that  IBVP similar to \eqref{eq1} can be considered in the context of cooling process in the production of heavy plates made of steel where the heat conduction in the time leads to some class of nonlinear parabolic equations where the nonlinear terms are associated with temperature dependent parameters (see e.g. \cite{RSSP}).

For all these models and the associated physical phenomenon, the goal of our inverse problem is to determine the nonlinear physical law of the system associated with \eqref{eq1}. This problem can be formulated in terms of simultaneous determination of the nonlinear diffusive  (or viscosity in the context of Burgers' equation) term $a(t,u)$ and of the nonlinear convection term $\mathcal B(t,x,u,\nabla u)$ modeling the drift vector for Fokker--Planck equations or  the velocity field of the moving quantity for convection-diffusion equations. 

Beside these physical motivations, there is also an important mathematical motivation for the study of such inverse problems due to their high nonlinearity. These problems can also be seen as a natural extension of similar problems of determination of coefficients stated for linear equations.

\subsection{Previous literature}
Inverse problems for various nonlinear equations have been widely studied in the last few decades. The key tool in the analysis of inverse problems for nonlinear equations is linearization of the PDE. In general, due to the presence of nonlinearity the solutions to the linearized equation can interact in a nonlinear fashion creating richer dynamics compared to the case of inverse problems for linear equations. This observation has been an underlying theme in majority of the works on inverse problems for nonlinear PDEs. The approach of first order linearization to solve inverse problems for a nonlinear equation was initiated by Isakov in \cite{Is1}. A second order linearization method was considered by Sun and Uhlmann in \cite{SuUh} while the idea of higher order linearization was fully utilized by Kurylev, Lassas and Uhlmann in \cite{KLU} to solve challenging inverse problems for hyperbolic equations. Without being exhaustive, we refer the reader for example to the works \cite{FLO,HUZ,Ki3,KLU,LLPT,LUW} that study inverse problems for nonlinear hyperbolic equations, \cite{CHY,FO20,ImYa,Is4,Is5,KU0,KU,LLLS,Sun2} for some results concerning semilinear elliptic equations as well as \cite{CF20,CFKKU,CNV,HS,Is2,KKU,MU,Sun1,SuUh} for results on quasilinear elliptic equations. All these works are based on the linearization method.  

In the context of nonlinear parabolic equations, the first results were concerned with recovery of semilinear terms $F(t,x,u)$ given the Dirichlet-to-Neumann map associated to the parabolic equation
$$\partial_t u - \Delta u + F(t,x,u)=0,\quad \text{on $(0,T)\times \Omega$}.$$
The recovery of nonlinearities of the form $F=F(u)$ was considered by Cannon and Yin in \cite{CY} and Pilant and Rundell in \cite{PR}, while the more general nonlinearity $F=F(x,u)$ was considred by Isakov in \cite{Is1}. There the author proved the recovery of time
independent semilinear terms of the form $F(x, u)$ given the additional over determination imposed by allowing arbitrary initial data  as well as final time overdetermination. The proof of \cite{Is1} is based on the first order linearization of the inverse problem combined with results of recovery of time-dependent coefficients proved by the same author in
\cite{Is0}. In \cite{CK} a further generalization of this result was derived together with a stability estimate. For further results in the semilinear parabolic setting with initial or final data over determination, we refer the reader to \cite{COY,Is2,Is3,KaRu1,KaRu2}. In the recent work \cite{KiUh} the authors considered the recovery of a general semilinear term depending on time variable, space variable and the solution and with zero initial conditions. 

The literature of studying inverse problems for quasilinear parabolic equations is rather sparse. We mention the work of Egger, Pietschmann and Schlottbom in \cite{EPS2} where in two and three dimensional physical space, the recovery of a semilinear term $a(t,u)$ was studied in the context of a quasilinear parabolic equation similar to \eqref{eq1}. However, the recovery of the nonlinear convection term was not considered there.

\subsection{Main results}
Our main result states that it is possible to uniquely determine the nonlinear diffusion term $a$ as well as the full Taylor series of the nonlinear convection term $\mathcal B$ at $\xi=0$, given the knowledge of the Dirichlet-to-Neumann map on a neighborhood of constant functions. Precisely, we will prove the following Theorem in Section~\ref{sec_proof_thm}. 
\begin{Thm}\label{t1}  For $j=1,2$, let $a_j\in\mathcal C^\infty([0,T]\times\R)$ satisfy \eqref{cond1} with $a=a_j$ and let $\mathcal B_j\in\mathcal C^\infty([0,T]\times\overline{\Omega}\times\R\times\R^n))$ satisfy \eqref{form}-\eqref{form1} with $B=B_j$, $b=b_j$. 
Then,  the condition
\bel{t1c} \mathcal N_{\lambda,a_1,\mathcal B_1}=\mathcal N_{\lambda,a_2,\mathcal B_2},\quad  \text{for all $\lambda \in \R$} \ee
implies that
\bel{t1d}a_1(t,\lambda)=a_2(t,\lambda),\quad t\in(0,T), \quad\lambda \in \R\ee
and
\bel{t1e}\partial_\xi^\beta \mathcal B_1(t,x,\lambda,0)=\partial_\xi^\beta \mathcal B_2(t,x,\lambda,0),\quad (t,x)\in(0,T)\times\Omega,\ \beta\in(\mathbb N\cup\{0\})^n,\, \lambda \in \R.\ee
\end{Thm}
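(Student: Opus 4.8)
The plan is to combine a higher order linearization of \eqref{eq1} with a density property for products of solutions of the resulting linear parabolic equations. Fix $\lambda\in\R$. By Proposition~\ref{TA1} the solution $u_\lambda(f)$ of \eqref{eq1} depends smoothly, in the topology of $\mathcal C^{1+\alpha/2,2+\alpha}$, on $f\in\mathbb B_\epsilon$, so given $f_1,\dots,f_N\in\mathcal K_0$ and small parameters $\eps_1,\dots,\eps_N$ I would differentiate $u_\lambda(\eps_1f_1+\dots+\eps_Nf_N)$ by $\partial_{\eps_1}\cdots\partial_{\eps_N}$ at $\eps=0$; the corresponding mixed derivative of $\mathcal N_{\lambda,a,\mathcal B}$ is then determined by $\mathcal N_{\lambda,a,\mathcal B}$ itself. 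Since $u_\lambda(0)\equiv\lambda$, hence $\nabla u_\lambda(0)\equiv0$, and since $\mathcal B(t,x,\lambda,0)=B(t,x,\lambda)$ by \eqref{form}--\eqref{form1}, the first linearization $v_j:=\partial_{\eps_j}u_\lambda|_{\eps=0}$ solves
$$\partial_tv_j-a(t,\lambda)\Delta v_j+B(t,x,\lambda)\cdot\nabla v_j=0\ \text{ in }M,\qquad v_j=f_j\ \text{ on }\Sigma,\qquad v_j(0,\cdot)=0,$$
and more generally the $N$-th mixed derivative $w$ solves the same linear equation with zero Cauchy data on $\Sigma$, zero initial data, and a right-hand side that is a universal polynomial in the $v_j$ and $\nabla v_j$ whose coefficients are the derivatives $\partial_\tau^k\partial_\xi^\beta\mathcal B(t,x,\lambda,0)$ and $\partial_\lambda^\ell a(t,\lambda)$ with $k+|\beta|\le N-1$. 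Because $\partial_\tau^k\partial_\xi^\beta\mathcal B(t,x,\lambda,0)=\partial_\lambda^k\big[\partial_\xi^\beta\mathcal B(t,x,\lambda,0)\big]$, the only coefficient genuinely new at order $N$ — once \eqref{t1d} and \eqref{t1e} hold for all $|\beta|<N-1$ and all $\lambda$ — is $\{\partial_\xi^\beta\mathcal B(t,x,\lambda,0):\ |\beta|=N-1\}$, with the convention that for $N=1$ this set is $\{B(t,x,\lambda)\}$.

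Step 1 establishes \eqref{t1d}. From \eqref{t1c} the first linearized Dirichlet-to-Neumann maps $f\mapsto a_j(t,\lambda)\partial_\nu v_j|_\Sigma$ coincide for every $\lambda$. I would test this identity against solutions $v$ of the linearized equation carrying a singularity concentrating near a fixed point $x_0\in\partial\Omega$: since $B(t,x,\lambda)\cdot\nabla v$ is of lower order, the leading asymptotics of the pair $(v|_\Sigma,a(t,\lambda)\partial_\nu v|_\Sigma)$ near $x_0$ is governed only by $a(t,\lambda)$, and as $a$ is independent of $x$ this forces $a_1(t,\lambda)=a_2(t,\lambda)$ for all $t\in(0,T)$ and all $\lambda$. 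This step uses only information near $\Sigma$ and is the parabolic analogue of boundary determination.

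Step 2 reduces \eqref{t1e} to a density statement by induction on $N=|\beta|+1$. Suppose $a_1=a_2=:a$ and $\partial_\xi^{\beta'}\mathcal B_1(\cdot,\cdot,\cdot,0)=\partial_\xi^{\beta'}\mathcal B_2(\cdot,\cdot,\cdot,0)$ for all $|\beta'|<N-1$ (vacuous when $N=1$). Then the first-linearized operator and the solutions $v_1,\dots,v_N$ are common to both problems, and by the discussion above the difference $w_1-w_2$ of the $N$-th linearizations solves $\partial_t(w_1-w_2)-a\Delta(w_1-w_2)+B\cdot\nabla(w_1-w_2)=S$ with zero initial data, where $S$ is built only from common data and from the new differences $\big(\partial_\xi^\beta\mathcal B_1-\partial_\xi^\beta\mathcal B_2\big)(t,x,\lambda,0)$, $|\beta|=N-1$, contracted with $\beta$-fold products of the $\nabla v_j$. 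Matching of $\mathcal N_{\lambda,a_j,\mathcal B_j}$ together with $a_1=a_2$ and the induction hypothesis forces $w_1-w_2$ to have vanishing Dirichlet and Neumann traces on $\Sigma$. Integrating by parts over $M$ against a solution $v_0$ of the backward adjoint linear equation with $v_0(T,\cdot)=0$ kills all boundary contributions and yields
$$\int_M\Big(\sum_{|\beta|=N-1}\big(\partial_\xi^\beta\mathcal B_1-\partial_\xi^\beta\mathcal B_2\big)(t,x,\lambda,0)\cdot P_\beta(\nabla v_1,\dots,\nabla v_N)\Big)\,v_0\,\dd x\,\dd t=0$$
for all $f_1,\dots,f_N\in\mathcal K_0$ and all admissible $f_0$, where each $P_\beta$ is a fixed $\R^n$-valued symmetric multilinear expression in $\nabla v_1,\dots,\nabla v_N$; for $N=1$ this reads $\int_M(B_1-B_2)(t,x,\lambda)\cdot\nabla v_1\,v_0\,\dd x\,\dd t=0$.

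Step 3 is the density property: the anisotropic products in the identity above must span enough of, say, $L^1(M)$ to force $\partial_\xi^\beta\mathcal B_1(t,x,\lambda,0)=\partial_\xi^\beta\mathcal B_2(t,x,\lambda,0)$ pointwise, which is \eqref{t1e}. For this I would insert, for $v_1,\dots,v_N$ and for $v_0$, geometric optics solutions of the linearized parabolic equation and of its adjoint that concentrate along a common ray in $\Omega$: with a large parameter $\rho$ one builds solutions of the form $e^{\rho\Theta}\big(\sigma_0+\rho^{-1}\sigma_1+\cdots\big)$ with a complex phase $\Theta$ solving the appropriate eikonal equation for $\partial_t-a(t,\lambda)\Delta$ so that $|v|$ localizes near a prescribed segment, the amplitudes solving transport equations and carrying prescribed polarization directions; these must be built with enough H\"older regularity and vanishing (together with the first time derivative) at $t=0$, respectively $t=T$, so that Proposition~\ref{TA1} and the integration by parts apply, and with remainders negligible as $\rho\to\infty$. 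Substituting them into the identity, a concentration / stationary phase analysis — letting $\rho\to\infty$, then varying the ray and the polarizations — isolates the value of $\partial_\xi^\beta\mathcal B_1-\partial_\xi^\beta\mathcal B_2$ at an arbitrary point of $M$ for each $\beta$ with $|\beta|=N-1$. The main obstacle is precisely this step: parabolic equations are not hyperbolic, so constructing sufficiently regular geometric optics solutions that genuinely concentrate on rays, and extracting from a single scalar identity all components $\partial_\xi^\beta\mathcal B_1-\partial_\xi^\beta\mathcal B_2$ with $|\beta|=N-1$ simultaneously, require real care; by comparison the bookkeeping in Steps 1--2 — identifying the new coefficient at each order and checking that the Cauchy data of $w_1-w_2$ vanish — is routine.
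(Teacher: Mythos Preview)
Your outline matches the paper's strategy: first-order linearization to recover $a$, then higher-order linearization plus a density property for products of GO solutions (the paper's Proposition~\ref{prop1}) to recover the Taylor coefficients of $\mathcal B$ at $\xi=0$. The identification of Step~3 as the genuine obstacle is accurate; the paper devotes Sections~4--5 to building sufficiently regular parabolic GO solutions and carrying out the concentration argument.

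Two points are worth flagging. First, in Step~1 the paper does not use boundary determination via the symbol of the linearized DN map; instead it uses singular solutions in the spirit of Alessandrini (Lemma~\ref{l100}), taking $\Psi_y(t,x)=\chi(t)\Phi_y(x)$ with $\Phi_y(x)=c_n|x-y|^{2-n}$ harmonic on $\Omega$ and $y\notin\overline\Omega$ approaching $\partial\Omega$, and deriving an integral identity over $M$ in which the term $\int(a_1-a_2)|\nabla\Phi_y|^2$ blows up like $r^{2-n}$ and dominates all lower-order contributions involving $B_j$. Your boundary-determination idea should also work since $a$ is $x$-independent and sits in the principal part of $f\mapsto a\partial_\nu v$, but the paper's route avoids any symbol calculus and makes the decoupling of $a$ from $\mathcal B$ transparent.

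Second, your induction at $N=1$ is stated too quickly: when only $a_1=a_2$ is known, the first-linearized operators $\partial_t-a\Delta+B_j\cdot\nabla$ are \emph{not} yet common, so the claim that ``the solutions $v_1,\dots,v_N$ are common to both problems'' fails there. One still obtains an identity $\int_M(B_1-B_2)\cdot\nabla v\, v_0=0$, but with $v$ and $v_0$ solving equations carrying $B_2$ and $B_1$ respectively; the paper therefore treats this base case separately (Section~3.2), adapting the GO solutions of Caro--Kian to the time-dependent diffusion $a(t,\lambda)$. Once $B_1(\cdot,\lambda)=B_2(\cdot,\lambda)$ is established, the linearized operator is genuinely common and your induction for $N\ge 2$ coincides with the paper's Section~3.3.
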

As a direct consequence of Theorem \ref{t1}, we obtain the following results of full recovery of the parameter $a$ and $\mathcal B$.

\begin{corollary}\label{c1} Let the condition of Theorem \ref{t1} be fulfilled and assume that, for $j=1,2$ and all $(t,x,\lambda)\in (0,T)\times\Omega\times\mathbb R$, the map $\mathbb R^n\ni\xi\mapsto  b_j(t,x,\lambda,\xi)$ is real-analytic. Then condition \eqref{t1c} implies that $a_1=a_2$ and $\mathcal B_1=\mathcal B_2$.
\end{corollary}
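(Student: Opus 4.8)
The plan is to deduce Corollary~\ref{c1} directly from the conclusions \eqref{t1d}--\eqref{t1e} of Theorem~\ref{t1}, combined with the product structure \eqref{form}--\eqref{form1} and the real-analyticity hypothesis; no new analysis of the PDE is needed. The equality $a_1=a_2$ on $(0,T)\times\R$ is exactly \eqref{t1d}, so the whole task is to upgrade the statement \eqref{t1e} about the $\xi$-jet of $\mathcal B_j$ at $\xi=0$ to the full identity $\mathcal B_1=\mathcal B_2$.

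First I would read off $B_1=B_2$ from the order-zero term of \eqref{t1e}: by \eqref{form}--\eqref{form1} one has $\mathcal B_j(t,x,\lambda,0)=b_j(t,x,\lambda,0)\,B_j(t,x,\lambda)=B_j(t,x,\lambda)$, so taking $\beta=0$ in \eqref{t1e} yields $B_1(t,x,\lambda)=B_2(t,x,\lambda)=:\mathbf B(t,x,\lambda)$ for $(t,x,\lambda)\in(0,T)\times\Omega\times\R$, and by continuity for $(t,x,\lambda)\in[0,T]\times\overline\Omega\times\R$. Next, since $B_j$ does not depend on $\xi$, differentiating \eqref{form} gives $\partial_\xi^\beta\mathcal B_j(t,x,\lambda,0)=(\partial_\xi^\beta b_j)(t,x,\lambda,0)\,\mathbf B(t,x,\lambda)$ for every multi-index $\beta$. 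Fix a point $(t,x,\lambda)\in(0,T)\times\Omega\times\R$ with $\mathbf B(t,x,\lambda)\neq 0$; selecting a nonvanishing component of $\mathbf B$ and dividing \eqref{t1e} by it, I obtain $(\partial_\xi^\beta b_1)(t,x,\lambda,0)=(\partial_\xi^\beta b_2)(t,x,\lambda,0)$ for all $\beta$. Hence the two real-analytic functions $\xi\mapsto b_1(t,x,\lambda,\xi)$ and $\xi\mapsto b_2(t,x,\lambda,\xi)$ on the connected set $\R^n$ have identical Taylor expansions at $\xi=0$, so by the identity theorem they coincide on all of $\R^n$.

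It then follows that $\mathcal B_1(t,x,\lambda,\xi)=b_1(t,x,\lambda,\xi)\mathbf B(t,x,\lambda)=b_2(t,x,\lambda,\xi)\mathbf B(t,x,\lambda)=\mathcal B_2(t,x,\lambda,\xi)$ at every point of $(0,T)\times\Omega\times\R\times\R^n$ for which $\mathbf B(t,x,\lambda)\neq 0$; at the remaining points, where $\mathbf B(t,x,\lambda)=0$, both $\mathcal B_1$ and $\mathcal B_2$ vanish identically in $\xi$ by \eqref{form}, regardless of $b_j$. Thus $\mathcal B_1=\mathcal B_2$ on the dense subset $(0,T)\times\Omega\times\R\times\R^n$, and continuity of the $\mathcal C^\infty$ functions $\mathcal B_j$ extends the equality to all of $[0,T]\times\overline\Omega\times\R\times\R^n$, which together with $a_1=a_2$ completes the proof.

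Since every step is either algebraic or an application of the identity theorem for real-analytic functions, there is no genuine obstacle here. The only points requiring a little care are the division by $\mathbf B$, which forces one to isolate and separately dispose of the (harmless) locus where $\mathbf B$ vanishes, and the use of \emph{global} rather than merely local real-analyticity of $\xi\mapsto b_j(t,x,\lambda,\xi)$ so that matching the Taylor jet at $\xi=0$ forces agreement on the whole of $\R^n$.
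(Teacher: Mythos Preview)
Your proof is correct and is exactly the argument the paper intends: the corollary is stated as ``a direct consequence of Theorem~\ref{t1}'' with no separate proof, and your write-up simply fills in the routine details (reading off $B_1=B_2$ from the $\beta=0$ case of \eqref{t1e}, dividing by a nonzero component of $\mathbf B$ to match all $\xi$-derivatives of $b_1,b_2$ at $\xi=0$, invoking the identity theorem, and handling the zero set of $\mathbf B$ trivially via \eqref{form}).
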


The proof of Theorem~\ref{t1} will be divided into several steps. We begin by constructing the nonlinear diffusion term $a$ through using solutions to the first order linearization of \eqref{eq1} with singular behavior near the boundary. Next, using first order linearization of the DN map together with standard Geometric Optic solutions to the first order linearization of \eqref{eq1} allows us to recover the nonlinear convection term $\mathcal B$ at $\xi=0$. The recovery of the full Taylor series of $\mathcal B$ at $\xi=0$ will be divided into two steps. First, we use the idea of higher order linearization to reduce the problem of recovering the Taylor series of $\mathcal B$ at $\xi=0$ to a density property for certain anisotropic products of solutions to the first order linearization of \eqref{eq1}. Second, we prove the density claim by using Geometric optic solutions with higher regularity. Our density claim can be stated as follows. In the following proposition, $\pi(m+1)$ stands for the set of all permutations of $\{1,\ldots,m+1\}$. Given any $j=1,\ldots,n$ the notation $\partial_{j}$ stands for the partial derivative with respect to $x^{j}$.

\begin{prop}\label{prop1}
	Let $a_0\in \mathcal C^{\infty}([0,T];\R^+) $ and $B_0 \in \mathcal C^{\infty}([0,T]\times\overline\Omega)^n$. Let $m\in \N$ and let $Q$ be a continuous function on $[0,T]\times \overline{\Omega}$ with values in the symmetric tensors of rank $m$. Suppose that
	\[
	\sum_{\ell_1,\ldots,\ell_{m+1}\in \pi(m+1)}\int_{(0,T)\times \Omega}\left(\sum_{j_1,\ldots,j_m=1}^nQ^{j_1,\ldots,j_m}\partial_{j_1}v_{\ell_1}\ldots \partial_{j_m}v_{\ell_m}\right) (B_0\cdot \nabla v_{\ell_{m+1}})\,v_{m+2}\,dt\,dx=0, 
	\]
	for all $v_{\ell}\in H^1(0,T;\mathcal C^2(\overline\Omega))$, $\ell=1,\ldots,m+1$ solving
	\bel{prop1aa}\partial_tv_{\ell}-a_0(t) \Delta v_{\ell}+B_0(t,x)\cdot\nabla v_{\ell}=0,\quad \text{on $(0,T)\times \Omega$},\ee
	subject to $v_\ell(0,x)=0$ on $\Omega$ and all $v_{m+2}\in H^1(0,T;\mathcal C^2(\overline\Omega))$ solving
		\bel{prop1ab}-\partial_tv_{m+2}-a_0(t)\Delta v_{m+2}-\textrm{div}(B_0(t,x)v_{m+2})=0,\quad \text{on $(0,T)\times \Omega$},\ee
	subject to $v_{m+2}(T,x)=0$ on $\Omega$. Then, $Q\otimes B_0$ vanishes identically on $(0,T)\times\Omega$. 
	
\end{prop}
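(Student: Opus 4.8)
The plan is to use geometric optics (WKB) solutions to the linearized parabolic equation \eqref{prop1aa} and its adjoint \eqref{prop1ab}, concentrated along a fixed line segment in $\Omega$, and to let the concentration parameter tend to infinity. Fix a point $x_0\in\Omega$ and a unit direction $\omega\in\mathbb{S}^{n-1}$ so that the segment $\{x_0+s\omega : s\in(-\delta,\delta)\}$ lies in $\Omega$. For a large parameter $\rho$ I would construct, for each $\ell=1,\dots,m+1$, an approximate solution of the form $v_\ell = e^{i\rho(x\cdot\omega_\ell)/??}$ — more precisely, parabolic Gaussian beam solutions concentrating on the ray, of the shape $v_\ell(t,x)=e^{i\rho\psi(t,x)-\rho\,\phi(t,x)}\,(A_{\ell,0}+\rho^{-1}A_{\ell,1}+\dots)$, where $\psi$ solves the eikonal equation and $\phi$ is a real quadratic phase forcing Gaussian transverse decay, chosen so that all the $v_\ell$ concentrate on the \emph{same} ray with the \emph{same} real phase but with distinct, freely chosen leading amplitudes and (crucially) distinct choices of the direction of the leading gradient. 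Because $\partial_{j}v_\ell \sim i\rho\,(\partial_j\psi)\,v_\ell$ to leading order, the product $\prod_{k=1}^m \partial_{j_k}v_{\ell_k}\cdot(B_0\cdot\nabla v_{\ell_{m+1}})\cdot v_{m+2}$ carries a factor $(i\rho)^{m+1}$ times a product of phase gradients; with $v_{m+2}$ chosen as a \emph{decaying} beam solution of the adjoint equation along the same ray, the combined exponential is integrable and, after rescaling the transverse variables by $\rho^{-1/2}$, a stationary-phase / Laplace argument shows that $\rho^{-(m+1)}\rho^{(n-1)/2}$ times the integral converges to a nonzero multiple of the value at $(t,x_0)$ of a specific contraction of $Q\otimes B_0$ against the tensor built from the chosen phase gradients and amplitudes.

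The key algebraic point is then to show that the resulting symmetric-tensor identity, as the beam parameters (leading-order amplitudes and the common phase-gradient direction, which can be taken to be any vector of the form $\theta$ with $\theta\cdot\theta=0$ in the complexified sense dictated by the eikonal equation, or after a real reduction any real covector) vary, forces every component $Q^{j_1,\dots,j_m}(t,x_0)\,B_0^{j_{m+1}}(t,x_0)$ to vanish. Concretely, the symmetrization over $\pi(m+1)$ means the integral sees only the fully symmetric part of $Q\otimes B_0$ contracted with $\xi^{\otimes(m+1)}$ for the phase covector $\xi$; since $Q$ is already symmetric of rank $m$, this symmetric part determines $Q\otimes B_0$ up to the obvious symmetrization, and a polarization identity in $\xi$ recovers $\mathrm{Sym}(Q\otimes B_0)$ pointwise. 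One then has to argue that $\mathrm{Sym}(Q\otimes B_0)=0$ actually implies $Q\otimes B_0=0$: this uses that $Q$ is symmetric, so $Q\otimes B_0$ is symmetric in its first $m$ slots, and the full symmetrization vanishing plus this partial symmetry is enough to conclude $Q^{j_1,\dots,j_m}B_0^{j}=0$ for all indices (a short combinatorial lemma). Finally, varying $(t,x_0)$ over a dense subset of $(0,T)\times\Omega$ and invoking continuity of $Q$ and $B_0$ yields the claim on all of $(0,T)\times\Omega$.

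The main obstacle I expect is the construction and justification of the geometric optics solutions with \emph{enough regularity} and with controlled remainders: the statement requires $v_\ell\in H^1(0,T;\mathcal C^2(\overline\Omega))$, and because we differentiate the beams up to $m$ times inside the integrand, the leading WKB term must be corrected to sufficiently high order in $\rho^{-1}$ (order depending on $m$) so that (i) each $v_\ell$ is an exact solution of \eqref{prop1aa} after absorbing the remainder via the well-posedness of the linear parabolic IBVP, and (ii) the error terms in the integrand are genuinely lower order in $\rho$ after the $\rho^{-1/2}$ transverse rescaling. Handling the boundary and initial conditions is a secondary but real nuisance: the Gaussian beams do not exactly satisfy $v_\ell(0,\cdot)=0$ or the implicit lateral conditions, so one cuts them off smoothly near $t=0$ and near $\partial\Omega$ away from the ray, and corrects by solving an auxiliary parabolic problem whose source is $O(\rho^{-N})$ in the relevant norm; controlling that correction in $\mathcal C^2$ in space uniformly in $\rho$ (or with at most polynomially bounded growth that is beaten by the $\rho^{-N}$ gain) is where the parabolic smoothing estimates and the precise choice of $N=N(m,n)$ enter. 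Once the beams are in hand, the asymptotic analysis of the multilinear integral is a fairly standard, if bookkeeping-heavy, Laplace-method computation.
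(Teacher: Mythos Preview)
Your strategic outline---build high-regularity geometric optics solutions, insert them into the multilinear integral, extract the leading term in $\rho$, and conclude via an algebraic identity at an arbitrary point $(t_0,x_0)$---matches the paper, and the obstacles you flag (high-order WKB corrections to control remainders in $\mathcal C^2$, handling the initial condition) are real and are precisely what Section~\ref{sec_smoothgo} addresses via Carleman estimates in negative-order Sobolev spaces.

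There is, however, a genuine gap at the core of your plan. The GO solutions available for this parabolic problem are not Gaussian beams with a complex eikonal satisfying $\theta\cdot\theta=0$; that constraint is elliptic, not parabolic. The relevant solutions (see \eqref{GO7}--\eqref{GO8}) have the real-exponential form
\[
U_{\pm,\rho}(t,x)=e^{\pm(\rho^2 t+\rho\,x\cdot\omega/\sqrt{a_0(t)})}\bigl(c_0+\rho^{-1}c_1+\cdots+R_\rho\bigr),\qquad \omega\in\mathbb S^{n-1},
\]
and the requirement that the exponentials in the product $v_1\cdots v_{m+1}\,v_{m+2}$ cancel forces \emph{simultaneous} quadratic (from the $t$-phase $\rho^2 t$) and linear (from the $x$-phase $\rho\,x\cdot\omega$) compatibility conditions on the parameters $(\rho_\ell,\omega_\ell)$. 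These are incompatible with your assumption that all $v_\ell$ share ``the same real phase'' while the gradient direction varies freely: if $v_1=\dots=v_{m+1}$ all carry direction $\omega$ and parameter $\rho$, the $t$-phase demands $\rho_{m+2}^2=(m+1)\rho^2$ while the $x$-phase demands $\rho_{m+2}=(m+1)\rho$, a contradiction for $m\geq1$. So you never reach the identity $Q(\xi,\dots,\xi)(B_0\cdot\xi)=0$ for arbitrary $\xi$, and the polarize-then-$\mathrm{Sym}(Q\otimes B_0)=0$ route is unavailable.

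The paper instead chooses \emph{different} directions and parameters tailored to the cancellation constraints. For $m\geq2$ one takes $v_1=\dots=v_m$ along $\omega_1$ with parameter $\rho$, $v_{m+1}$ along $\omega_2$ with parameter $\kappa\rho$ where $\kappa=-\tfrac{m-1}{2\,\omega_1\cdot\omega_2}$, and $v_{m+2}$ along $\omega_3=(m\omega_1+\kappa\omega_2)/\sqrt{m+\kappa^2}$; the resulting pointwise identity is
\[
m\,Q(\omega_1,\dots,\omega_1,\omega_2)(B_0\cdot\omega_1)+Q(\omega_1,\dots,\omega_1)(B_0\cdot\omega_2)=0,
\]
which is \emph{not} the full symmetrization. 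For $m\leq2$ this already forces $Q\otimes B_0=0$; for $m\geq3$ a second family of test solutions (with $s$ copies along $\omega_1$ split between parameters $\rho$ and $-(s-1)\rho$, the rest along $\omega_2$) is needed to produce the additional relations
\[
s\,Q(\underbrace{\omega_1,\dots,\omega_1}_{s-1},\underbrace{\omega_2,\dots,\omega_2}_{m+1-s})(B_0\cdot\omega_1)+(m-s+1)\,Q(\underbrace{\omega_1,\dots,\omega_1}_{s},\underbrace{\omega_2,\dots,\omega_2}_{m-s})(B_0\cdot\omega_2)=0
\]
for $s=2,\dots,m$, and these together yield $Q\otimes B_0=0$. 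Localization to $(t_0,x_0)$ is achieved not by a transverse $\rho^{-1/2}$ rescaling but by a second small parameter $\delta$ in the amplitudes (cutoffs transverse to two non-parallel rays plus a cutoff in $t$), sent to zero \emph{after} $\rho\to\infty$. The missing ingredient in your proposal is thus the design of directions and parameters compatible with the parabolic phase, and the case-by-case algebra that replaces your polarization lemma.
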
	

\subsection{Comments about our results}

Let us first observe that to the best of our knowledge Theorem \ref{t1} and Corollary \ref{c1} are the first results for simultaneous  recovery of the two general classes of nonlinear terms $a$ and $\mathcal B$ satisfying \eqref{cond1}--\eqref{form}. The simultaneous recovery of these two classes of parameters relies partly on the fact that the diffusion term $a$ can be determined independently of the choice of the convection term $\mathcal B$. While this idea was already used by \cite{EPS2} in order to recover a nonlinear diffusion term in dimensions two and three, this article is the first in proving the simultaneous recovery of both these classes of parameters. Moreover, the nonlinear convection term in this paper has dependence not only on space and time but also on the solution and its gradient and as far as we know, even for $a\equiv1$, Theorem \ref{t1} and  Corollary \ref{c1} are the first results for the full recovery of such a general class of convection terms.

The recovery of  the diffusion term $a$ is based on the first order linearization and application of suitable singular solutions in the spirit of \cite{Alessandrini}. As observed by \cite{EPS2}, this approach allows us to determine the diffusion term $a$ independently of the choice of the convection term $\mathcal B$. Using this approach, we prove in Proposition \ref{t3} the unique recovery of the diffusion term $a$ given the data $\mathcal N_{\lambda,a,\mathcal B}$, $\lambda\in\R$. A similar problem was considered by \cite{EPS2} but with some extended knowledge of the parabolic Dirichlet-to-Neumann map not restricted to neighborhood of constant functions.

Once the unique recovery of the diffusion term $a$ is proved, we consider the determination of the nonlinear convection term $\mathcal B$. Here we use the higher order linearization approach initiated by \cite{KLU} in order to transform this inverse problem to a density property for solutions of the linearized problem as stated in Proposition \ref{prop1}. We prove Proposition \ref{prop1} by utilizing specific solutions of linear parabolic equations, called geometric optics solutions, that are constructed by means of suitable Carleman estimates. The construction of such solutions with constant second order coefficients can be found in \cite{CK1} with $L^2$-bounds on the remainder terms of the geometric optics solutions. However, in the context of Theorem \ref{t1} we need to consider such class of geometric optics solutions with second order time dependent coefficients and improved regularity. In Section~\ref{sec_smoothgo} we prove the construction of these new classes of geometric optics solutions that we design for the proof of Proposition \ref{prop1}. 

In order to prove Proposition~\ref{prop1}, we consider a specific class of geometric optics solutions whose products concentrate near arbitrary points $x_0\in\Omega$ and $t_0 \in (0,T)$. This idea is inspired by the approach of \cite{CFKKU} where a similar construction was carried out in the context of determining a nonlinear conductivity in an elliptic equation. Nevertheless, we would like to mention that for our parabolic problem there are several technical differences compared to the elliptic problem studied in \cite{CFKKU} that we will sketch as follows. One of these difficulties comes from the fact that the parabolic equation studied here is not self adjoint as opposed to the self adjoint elliptic equation studied in \cite{CFKKU}. This makes some of the symmetries present in the latter work to disappear as is apparent already from the statement of our Proposition~\ref{prop1} compared to the analogous proposition in \cite{CFKKU}. Secondly, the form of the geometric optics solutions here are rather different from the complex geometric optics solutions constructed in \cite{CFKKU}. This is mainly due to the parabolic scaling of the phase function, see \eqref{GO7}--\eqref{GO8}. As a consequence the process of canceling the exponential terms that are present in the geometric optics solutions is achieved via different arguments. 

\subsection{Organization of the paper}

This article is organized as follows. In Section 2, we show some properties of solutions of \eqref{eq1} including the well posedness for small data and the linearization properties. Section 3 is devoted to the unique recovery of the diffusive term $a(t,u)$ and the reduction of Theorem \ref{t1} into the density property of products of solutions of the linearized parabolic problem stated in Proposition \ref{prop1}. In Section 4, we introduce a new class of smooth Geometric optic solutions, with higher regularity, for some class of linear parabolic equation of the form \eqref{prop1aa}-\eqref{prop1ab}. Using the Geometric optic solutions of Section 4, in Section 5 we complete the proof of Proposition \ref{prop1} and by the same the proof of 
Theorem \ref{t1}.

\section{Preliminaries}

\subsection{Well-posedness for small data}

In this subsection, we consider the well posedness for the problem \eqref{eq1} when the data $f$ is sufficiently small. For this purpose, we consider the Banach space $\mathcal K_0$ with the norm of the space $\mathcal C^{1+\frac{\alpha}{2},2+\alpha}([0,T]\times \partial\Omega)$. Our result can be stated as follows.

\begin{prop}\label{TA1} Let $\mathcal B\in \mathcal C^\infty([0,T]\times\overline{\Omega}\times\R\times\R^n)^n$, $a\in\mathcal C^\infty([0,T]\times\R)$ satisfy condition \eqref{cond1}. Then for all $\lambda\in\R$, there exists $\epsilon>0$ depending on  $a$, $\mathcal B$, $\lambda$, $\Omega$, $T$, such that, for $f\in \mathbb B_\epsilon$, problem \eqref{eq1} admits a unique solution $u_\lambda\in\mathcal C^{1+\frac{\alpha}{2},2+\alpha}([0,T]\times\overline{\Omega}))$ satisfying
\bel{TA1a}\norm{u_\lambda-\lambda}_{\mathcal C^{1+\frac{\alpha}{2},2+\alpha}([0,T]\times\overline{\Omega}))}\leq C\norm{f}_{\mathcal C^{1+\frac{\alpha}{2},2+\alpha}([0,T]\times\partial\Omega)}.\ee
\end{prop}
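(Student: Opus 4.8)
The plan is to view \eqref{eq1} as a perturbation of the trivial solution $u\equiv\lambda$ (which solves \eqref{eq1} with $f=0$) and to produce $u_\lambda$ by the implicit function theorem in the anisotropic H\"older classes. First I would homogenize the lateral and initial data: using a bounded extension operator for the scale $\mathcal C^{1+\frac{\alpha}{2},2+\alpha}$, pick $\tilde f=\tilde f(f)\in\mathcal C^{1+\frac{\alpha}{2},2+\alpha}([0,T]\times\overline\Omega)$ with $\tilde f=f$ on $\Sigma$, $\tilde f(0,\cdot)=\partial_t\tilde f(0,\cdot)=0$ on $\overline\Omega$, and $\|\tilde f\|_{\mathcal C^{1+\frac{\alpha}{2},2+\alpha}([0,T]\times\overline\Omega)}\le C\|f\|_{\mathcal K_0}$ — the vanishing traces at $t=0$ being available precisely because $f\in\mathcal K_0$ satisfies $f(0,\cdot)=\partial_tf(0,\cdot)=0$. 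Substituting $u=\lambda+\tilde f+w$ and using the identity $\mathrm{div}(a(t,u)\nabla u)=a(t,u)\Delta u+\partial_\tau a(t,u)|\nabla u|^2$ (with $\partial_\tau a$ the derivative of $a$ in its scalar argument), problem \eqref{eq1} is equivalent to
\bel{eqTA1w}
\partial_tw-a(t,\lambda+\tilde f+w)\,\Delta w=G(f,w)\ \text{in }M,\qquad w=0\ \text{on }\Sigma,\qquad w(0,\cdot)=0\ \text{on }\Omega,
\ee
with $G(f,w)=-\partial_t\tilde f+a(t,\lambda+\tilde f+w)\Delta\tilde f+\partial_\tau a(t,\lambda+\tilde f+w)|\nabla(\tilde f+w)|^2-\mathcal B(t,x,\lambda+\tilde f+w,\nabla(\tilde f+w))\cdot\nabla(\tilde f+w)$.

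Next I would set $X_0=\{w\in\mathcal C^{1+\frac{\alpha}{2},2+\alpha}([0,T]\times\overline\Omega):w|_\Sigma=0,\ w(0,\cdot)=0\}$ and $Y_0=\{F\in\mathcal C^{\frac{\alpha}{2},\alpha}([0,T]\times\overline\Omega):F(0,\cdot)=0\ \text{on }\partial\Omega\}$, and define $\Phi(f,w)=\partial_tw-a(t,\lambda+\tilde f+w)\Delta w-G(f,w)$. Because $\mathcal C^{\frac{\alpha}{2},\alpha}([0,T]\times\overline\Omega)$ is a Banach algebra on which superposition with $\mathcal C^\infty$ functions acts smoothly, $\Phi$ is a well-defined $\mathcal C^1$ (indeed $\mathcal C^\infty$) map from $\mathbb B_{r_0}\times X_0$ into $Y_0$ for some $r_0>0$ — its range lands in $Y_0$ because at $t=0$ the functions $w,\tilde f$ and their first spatial derivatives vanish while $\partial_tw=0$ on $\Sigma$ — and $\Phi(0,0)=0$. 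The key computation is the partial differential $D_w\Phi(0,0)$: at $f=0$ one has $\tilde f\equiv0$, the quadratic gradient term contributes nothing to the linearization, and since \eqref{form1} gives $\mathcal B(t,x,\lambda,0)=B(t,x,\lambda)$, one finds
$$D_w\Phi(0,0)\,h=\partial_th-a(t,\lambda)\Delta h+B(t,x,\lambda)\cdot\nabla h,\qquad h\in X_0.$$

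This is a linear, uniformly parabolic operator with smooth coefficients (using \eqref{cond1}), and for every $F\in Y_0$ the problem $D_w\Phi(0,0)h=F$, $h|_\Sigma=0$, $h(0,\cdot)=0$, has order-one compatibility conditions at the parabolic corner $\{0\}\times\partial\Omega$ that are met exactly when $F(0,\cdot)=0$ on $\partial\Omega$; hence, by the linear parabolic Schauder theory in the classes $\mathcal C^{1+\frac{\alpha}{2},2+\alpha}$, $\mathcal C^{\frac{\alpha}{2},\alpha}$ (see \cite{Ch}), it has a unique solution $h\in X_0$ with $\|h\|_{X_0}\le C\|F\|_{Y_0}$. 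Thus $D_w\Phi(0,0):X_0\to Y_0$ is a Banach-space isomorphism, and the implicit function theorem yields $\epsilon\in(0,r_0]$ and a $\mathcal C^1$ map $\mathbb B_\epsilon\ni f\mapsto w(f)\in X_0$ with $w(0)=0$ and $\Phi(f,w(f))=0$, uniquely determined in a small ball of $X_0$. Setting $u_\lambda=\lambda+\tilde f(f)+w(f)$ gives the solution of \eqref{eq1}, unique among functions lying close to $\lambda$ in $\mathcal C^{1+\frac{\alpha}{2},2+\alpha}([0,T]\times\overline\Omega)$; and after shrinking $\epsilon$ so that the Lipschitz constant of $w$ is bounded, $\|w(f)\|_{X_0}=\|w(f)-w(0)\|_{X_0}\le C\|f\|_{\mathcal K_0}$, which together with the bound on $\tilde f(f)$ yields \eqref{TA1a}.

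I expect the main obstacle to be the precise invocation of the linear parabolic theory: the solvability and Schauder estimate for $\partial_t-a(t,\lambda)\Delta+B(t,x,\lambda)\cdot\nabla$ in the anisotropic H\"older scale, together with the order-one compatibility conditions at $\{0\}\times\partial\Omega$, which is exactly where the defining conditions $f(0,\cdot)=\partial_tf(0,\cdot)=0$ of $\mathcal K_0$ (preserved under the lifting $f\mapsto\tilde f$) are used. Everything else is routine; alternatively, instead of the implicit function theorem one could solve \eqref{eqTA1w} by a direct contraction-mapping argument, freezing the principal coefficient at $a(t,\lambda)$ and treating $(a(t,\lambda+\tilde f+w)-a(t,\lambda))\Delta w$ together with $G(f,w)$ as a right-hand side that is a small, Lipschitz perturbation once $\|f\|_{\mathcal K_0}$ and $\|w\|_{X_0}$ are small.
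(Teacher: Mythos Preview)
Your proof is correct and follows essentially the same route as the paper: reduce to a perturbation of the constant solution $\lambda$, set up a smooth nonlinear map between anisotropic H\"older spaces with the right compatibility conditions at $\{0\}\times\partial\Omega$, compute the linearization $\partial_t-a(t,\lambda)\Delta+B(t,x,\lambda)\cdot\nabla$, invoke linear parabolic Schauder theory for its invertibility, and conclude by the implicit function theorem. The only cosmetic difference is that you first lift $f$ to an interior extension $\tilde f$ and work with $w$ having zero lateral trace, whereas the paper keeps the trace $v|_\Sigma-f$ as a second component of its map $\mathcal G:(f,v)\mapsto(\text{PDE residual},\,v|_\Sigma-f)$ and cites \cite{LSU} rather than \cite{Ch}; both packagings lead to the same linearized isomorphism and the same estimate.
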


\begin{proof} Let us first observe that we can split $u_\lambda$ into two terms $u_\lambda=\lambda+v_\lambda$, where $v=v_\lambda$ solves
\bel{eq1000}
\left\{
\begin{array}{ll}
\partial_tv-\textrm{div}(a(t,v+\lambda) \nabla v)+\mathcal B(t,x,v+\lambda,\nabla v)\cdot\nabla v=0  & \mbox{in}\ (0,T)\times\Omega ,
\\
v=f &\mbox{on}\ (0,T)\times\partial\Omega,\\
v(0,x)=0 &\ x\in\Omega.
\end{array}
\right.
\ee
Therefore, it is enough for our purpose to show that there exists $\epsilon>0$ depending on  $a$, $\mathcal B$, $\lambda$, $\Omega$, $T$, such that, for $f\in \mathbb B_\epsilon$, problem \eqref{eq1000} admits a unique solution $v_\lambda\in\mathcal C^{1+\frac{\alpha}{2},2+\alpha}([0,T]\times\overline{\Omega})$ satisfying
\bel{TA1v}\norm{v_\lambda}_{\mathcal C^{1+\frac{\alpha}{2},2+\alpha}([0,T]\times\overline{\Omega})}\leq C\norm{f}_{\mathcal C^{1+\frac{\alpha}{2},2+\alpha}([0,T]\times\partial\Omega)}.\ee
For this purpose, we consider the spaces
$$\mathcal H_0:=\{u\in \mathcal C^{1+\frac{\alpha}{2},2+\alpha}([0,T]\times\overline{\Omega}):\ u_{|\{0\}\times\overline{\Omega}}\equiv 0,\ \partial_tu_{|\{0\}\times\partial\Omega}\equiv 0\},$$
$$\mathcal L_0:=\{F\in \mathcal C^{\frac{\alpha}{2},\alpha}([0,T]\times\overline{\Omega}):\ F_{|\{0\}\times\partial\Omega}\equiv 0\}.$$
Then, we introduce the map $\mathcal G$ from $\mathcal K_0\times\mathcal H_0$ to the space $\mathcal L_0\times\mathcal K_0$ defined by
$$\mathcal G: (f,v)\mapsto(\partial_tv-\textrm{div}(a(t,v+\lambda) \nabla v)+\mathcal B(t,x,v+\lambda,\nabla v)\cdot\nabla v, v_{|(0,T)\times\partial\Omega}-f).$$
We will define the solution of \eqref{eq100} by applying the implicit function theorem to the map $\mathcal G$. Using the fact that $\mathcal B$ and $a$ are smooth, it follows that the map $\mathcal G$ is $\mathcal C^\infty$ on $\mathcal K_0\times\mathcal H_0$. Moreover, we have $\mathcal G(0,0)=(0,0)$ and
$$\partial_v\mathcal G(0,0)w=(\partial_tw-a(t,\lambda)\Delta w+\mathcal B(t,x,\lambda,0)\cdot\nabla w, w_{|(0,T)\times\partial\Omega}).$$
In order to apply the implicit function theorem, we will prove that the map $\partial_v\mathcal G(0,0)$ is an isomorphism from $\mathcal H_0$ to $\mathcal L_0\times\mathcal K_0$. For this purpose, let us fix $(F,h)\in \mathcal L_0\times\mathcal K_0$ and let us consider the linear problem
\bel{eq101}
\left\{
\begin{array}{ll}
\partial_tw-a(t,\lambda)\Delta w+\mathcal B(t,x,\lambda,0)\cdot\nabla w=F(t,x)  & \mbox{in}\ (0,T)\times\Omega ,
\\
w=h &\mbox{on}\ (0,T)\times\partial\Omega,\\
w(0,x)=0 &\ x\in\Omega.
\end{array}
\right.
\ee
Applying \cite[Theorem 5.2, Chapter IV, page 320]{LSU}, we deduce that problem \eqref{eq101} admits a unique solution $w\in \mathcal H_0$ satisfying
$$\norm{w}_{\mathcal C^{1+\frac{\alpha}{2},2+\alpha}([0,T]\times\overline{\Omega})}\leq C(\norm{F}_{\mathcal C^{\frac{\alpha}{2},\alpha}([0,T]\times\overline{\Omega})}+\norm{h}_{\mathcal C^{1+\frac{\alpha}{2},2+\alpha}([0,T]\times\partial\Omega)}).$$
 From this result we deduce that $\partial_v\mathcal G(0,0)$ is an isomorphism from $\mathcal H_0$ to $\mathcal L_0\times\mathcal K_0$. Therefore, applying the implicit function theorem, we deduce that there exists $\epsilon>0$ depending on  $a$, $\mathcal B$, $\lambda$, $\Omega$, $T$, and a smooth map $\psi$ from $\mathbb B_\epsilon$ to $\mathcal H_0$, such that, for all $f\in \mathbb B_\epsilon$, we have 
$\mathcal G(f,\psi(f))=(0,0)$.
This proves that , for all $f\in \mathbb B_\epsilon$, $v=\psi(f)$ is a solution of \eqref{eq1000}. Recalling that a solution of the problem   \eqref{eq1000} can also be seen as a solution of the linear problem with sufficiently smooth coefficients depending on $v$, we can apply \cite[Theorem 5.2, Chapter IV, page 320]{LSU} in order to deduce that $v=\psi(f)$ is the unique solution of \eqref{eq1000}. Combining this with the fact that $\psi$ is smooth from $B_\epsilon$ to $\mathcal H_0$, we obtain \eqref{TA1v}. This completes the proof of the theorem.\end{proof}

\subsection{Linearization of the problem}
Let $\mathcal B\in\mathcal C^\infty([0,T]\times\overline{\Omega}\times\R\times\R^n)^n$, $a\in\mathcal C^\infty([0,T]\times\R)$ satisfy condition \eqref{cond1}. Let us introduce $\lambda\in\R$, $m\in\mathbb N\cup\{0\}$ and consider $s=(s_1,\ldots,s_{m+1})\in(0,1)^{m+1}$ and $\lambda\in\R$. Fixing $g_1,\ldots,g_{m+1}\in \mathbb B_{\frac{\epsilon}{m+1}}$,
we consider $u=u_{s}$ the solution of
\bel{eq2}
\left\{
\begin{array}{ll}
\partial_tu-\textrm{div}(a(t,u) \nabla u)+\mathcal B(t,x,u,\nabla u)\cdot\nabla u=0  & \mbox{in}\ (0,T)\times\Omega ,
\\
u=\lambda+\sum_{i=1}^ms_ig_i &\mbox{on}\ (0,T)\times\partial\Omega,\\
u(0,x)=\lambda &\ x\in\Omega.
\end{array}
\right.
\ee
Following the proof of Proposition \ref{TA1}, we know that the map $s\mapsto u_s$ is lying in\\
 $\mathcal C^\infty\left((-1,1)^{m+1};\mathcal C^{1+\frac{\alpha}{2},2+\alpha}([0,T]\times\overline{\Omega})\right)$.
We will start by considering the partial derivative 
\bel{mthderivative_rep}
\partial_{s_1}\partial_{s_2}\ldots\partial_{s_{m+1}}u_s,\quad \text{at $s=0$}.
\ee
We can split  $u_{s}$ into $m+2$ terms 
$$ u_{s}=\lambda+s_1w_{1,s}+\ldots+s_{m+1}w_{m+1,s}.$$
where, for $\ell=1,\ldots,m$, $w_{\ell,s}\in  \mathcal C^{2+\alpha}(\overline{\Omega})$ solves 
\bel{eq3}
\left\{
\begin{array}{ll}
\partial_t w_{\ell,s}-\textrm{div}(a(t,u_s) \nabla w_{\ell,s})+\mathcal B(t,x,u_s,\nabla u_s)\cdot\nabla  w_{\ell,s}=0  & \mbox{in}\ (0,T)\times\Omega ,
\\
 w_{\ell,s}=g_\ell &\mbox{on}\ (0,T)\times\partial\Omega,\\
 w_{\ell,s}(0,x)=0 &\ x\in\Omega.
\end{array}
\right.
\ee
Our aim in the remainder of this section is to use the above representation formula for $u_{s}$, to justify and evaluate \eqref{mthderivative_rep}.

Let us first consider $\underset{s\to0}{\lim}\,w_{\ell,s}$, $\ell=1,\ldots,m+1$. For this purpose, we introduce
 the solution of the linear problem
\bel{eq4}
\left\{
\begin{array}{ll}
\partial_tv-a(t,\lambda)\Delta v+\mathcal B(t,x,\lambda,0)\cdot\nabla v=0,  & \mbox{in}\ (0,T)\times\Omega ,
\\
v=g &\mbox{on}\ (0,T)\times\partial\Omega,\\
 v(0,x)=0 &\ x\in\Omega
\end{array}
\right.
\ee

\begin{lem}\label{l1} For  $\ell=1,\ldots,m+1$, we consider $v_{\ell}$ the solution of \eqref{eq4} with    $g=g_\ell$. Then we have

\bel{l1a}\lim_{s\to0}\norm{w_{\ell,s}-v_{\ell}}_{\mathcal C^{1+\frac{\alpha}{2},2+\alpha}([0,T]\times\overline{\Omega})}=0.\ee
\end{lem}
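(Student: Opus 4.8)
The plan is to prove Lemma~\ref{l1} by writing the difference $z_{\ell,s}:=w_{\ell,s}-v_\ell$ as the solution of a linear parabolic IBVP with zero boundary and initial data, and a forcing term that is controlled by the smallness of $s$. Subtracting \eqref{eq4} (with $g=g_\ell$) from \eqref{eq3}, and using that $v_\ell$ solves the constant-coefficient-in-$(\lambda,0)$ equation, one gets
\begin{equation*}
\partial_t z_{\ell,s}-\mathrm{div}(a(t,u_s)\nabla z_{\ell,s})+\mathcal B(t,x,u_s,\nabla u_s)\cdot\nabla z_{\ell,s}=F_{\ell,s} \quad\text{in }(0,T)\times\Omega,
\end{equation*}
with $z_{\ell,s}=0$ on $(0,T)\times\partial\Omega$ and $z_{\ell,s}(0,\cdot)=0$, where the source is
\begin{equation*}
F_{\ell,s}=\mathrm{div}\big((a(t,u_s)-a(t,\lambda))\nabla v_\ell\big)-\big(\mathcal B(t,x,u_s,\nabla u_s)-\mathcal B(t,x,\lambda,0)\big)\cdot\nabla v_\ell .
\end{equation*}

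Next I would estimate $\|F_{\ell,s}\|_{\mathcal C^{\alpha/2,\alpha}}$. By Proposition~\ref{TA1} applied to the data $\sum_i s_i g_i\in\mathbb B_\epsilon$ for $s$ small, $u_s-\lambda$ is $\mathcal C^{1+\alpha/2,2+\alpha}$ and $\|u_s-\lambda\|_{\mathcal C^{1+\alpha/2,2+\alpha}}\le C\sum_i|s_i|\,\|g_i\|$, hence $u_s\to\lambda$ and $\nabla u_s\to 0$ in $\mathcal C^{\alpha/2,\alpha}$ as $s\to 0$. Since $a$ and $\mathcal B$ are $\mathcal C^\infty$ and $v_\ell$ is fixed in $\mathcal C^{1+\alpha/2,2+\alpha}$, the coefficients $a(t,u_s)-a(t,\lambda)$ and $\mathcal B(t,x,u_s,\nabla u_s)-\mathcal B(t,x,\lambda,0)$ tend to $0$ in $\mathcal C^{\alpha/2,\alpha}$ (using the mean value theorem and the algebra/composition properties of the H\"older spaces $\mathcal C^{\alpha/2,\alpha}$ and $\mathcal C^{1+\alpha/2,2+\alpha}$). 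Therefore $\|F_{\ell,s}\|_{\mathcal C^{\alpha/2,\alpha}([0,T]\times\overline\Omega)}\to 0$; one also checks $F_{\ell,s}$ vanishes at $t=0$ on $\partial\Omega$ in the appropriate sense so that the compatibility conditions needed for the parabolic Schauder theory hold, which is automatic here since $u_s(0,\cdot)=\lambda$ and $v_\ell(0,\cdot)=0$.

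Then I would invoke the linear parabolic Schauder estimate \cite[Theorem 5.2, Chapter IV, page 320]{LSU} for the operator $\partial_t-\mathrm{div}(a(t,u_s)\nabla\cdot)+\mathcal B(t,x,u_s,\nabla u_s)\cdot\nabla$, whose coefficients lie in $\mathcal C^{\alpha/2,\alpha}$ uniformly for $s$ in a neighborhood of $0$ (again by Proposition~\ref{TA1}) and whose leading coefficient $a(t,u_s)$ is bounded below by a positive constant by \eqref{cond1} and continuity. This yields a constant $C$, uniform in $s$ small, with
\begin{equation*}
\|z_{\ell,s}\|_{\mathcal C^{1+\alpha/2,2+\alpha}([0,T]\times\overline\Omega)}\le C\,\|F_{\ell,s}\|_{\mathcal C^{\alpha/2,\alpha}([0,T]\times\overline\Omega)}\to 0,
\end{equation*}
which is exactly \eqref{l1a}. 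The one genuinely delicate point is the uniformity of the Schauder constant as $s\to 0$: this requires knowing that the H\"older norms of the coefficients stay bounded and the ellipticity constant stays bounded below uniformly in $s$, which I would extract from the continuity of $s\mapsto u_s$ into $\mathcal C^{1+\alpha/2,2+\alpha}$ established in Proposition~\ref{TA1} together with the smoothness of $a,\mathcal B$; alternatively one can fix the coefficients at $s=0$ and move the difference of the principal parts into the right-hand side as well, absorbing the resulting small term for $s$ sufficiently small. Everything else is a routine application of composition estimates in parabolic H\"older spaces.
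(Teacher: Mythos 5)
Your proof is correct and follows essentially the same route as the paper: both write the difference of $w_{\ell,s}$ and $v_\ell$ as the solution of a linear parabolic IBVP with vanishing initial and boundary data and a source that is $O(|s|)$ in $\mathcal C^{\frac{\alpha}{2},\alpha}$ (via Proposition~\ref{TA1}, the mean value theorem and composition estimates in parabolic H\"older spaces), and then conclude by the Schauder estimate of \cite[Theorem 5.2, Chapter IV]{LSU}. The only immaterial difference is that the paper applies the operator frozen at $(\lambda,0)$ to $v_\ell-w_{\ell,s}$, placing the coefficient differences times the uniformly bounded $w_{\ell,s}$ in the source, whereas you keep the $s$-dependent operator and place the coefficient differences times the fixed $v_\ell$ in the source; the uniformity-in-$s$ of the Schauder constant that you rightly flag is handled correctly by your remark that the coefficient norms and the ellipticity constant are uniformly controlled, and your proposed alternative of freezing the coefficients at $s=0$ is precisely the paper's argument.
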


\begin{proof} In all this proof $C$ and $\tilde C$ will be two generic constants depending on $a$,  $\mathcal B$, $\Omega$, $\lambda$, $\epsilon$ and $T$ that may change from line to line.
In view of Proposition \ref{TA1}, we know that \eqref{eq2}  admits a unique solution  $u_{s}\in  \mathcal H_0$ satisfying 
\bel{l1bb}\norm{u_{s}}_{\mathcal C^{1+\frac{\alpha}{2},2+\alpha}([0,T]\times\overline{\Omega})}\leq \tilde C.\ee
 Applying this estimate and fixing 
$$ a_s(t,x)=a(t,u_{s}(t,x)),\quad t\in(0,T),\ x\in\Omega,$$
$$\mathcal B_{s}(t,x)=\mathcal B(t,x,u_{s}(t,x),\nabla u_{s}(t,x)),\quad t\in(0,T),\ x\in\Omega,$$
we deduce that $a_s\in \mathcal C^{1+\frac{\alpha}{2},2+\alpha}([0,T]\times\overline{\Omega})$, $\mathcal B_s\in \mathcal C^{\frac{\alpha}{2},\alpha}([0,T]\times\overline{\Omega})$ with
$$\begin{aligned}\norm{a_s}_{\mathcal C^{1+\frac{\alpha}{2},2+\alpha}([0,T]\times\overline{\Omega})}&\leq C\norm{a(\cdot,0)}_{\mathcal C^2([0,T])}+C\sup_{k=0,\ldots,3}\sup_{|\tau|\leq \tilde C}\norm{\partial_\tau^k  a(\cdot,\tau,)}_{\mathcal C^1([0,T])}\norm{u_{s}}_{\mathcal C^{1+\frac{\alpha}{2},2+\alpha}([0,T]\times\overline{\Omega})}\\
&\leq C(1+\tilde C),\end{aligned}$$

\begin{multline}\label{l1aa}
\norm{\mathcal B_s}_{\mathcal C^{\frac{\alpha}{2},\alpha}([0,T]\times\overline{\Omega})} \leq C\norm{\mathcal B(\cdot,0,0)}_{\mathcal C^{\frac{\alpha}{2},\alpha}([0,T]\times\overline{\Omega})}\\
+C\sup_{|(k,\alpha)|\leq1}\sup_{|\tau|\leq M}\sup_{|\xi|\leq \tilde C}\norm{\partial_\tau^k \partial_{\xi}^\alpha \mathcal B(\cdot,\tau,\xi)}_{\mathcal C^{\frac{\alpha}{2},\alpha}([0,T]\times\overline{\Omega})}(\norm{u_{s}}_{\mathcal C^{\frac{\alpha}{2},\alpha}([0,T]\times\overline{\Omega})}+\norm{\nabla u_{s}}_{\mathcal C^{\frac{\alpha}{2},\alpha}([0,T]\times\overline{\Omega})})\\
\leq C(1+\norm{u_{s}}_{\mathcal C^{1+\frac{\alpha}{2},2+\alpha}([0,T]\times\overline{\Omega})})\leq C.
\end{multline}
 Therefore, in view of \cite[Theorem 5.2, Chapter IV, page 320]{LSU}, $w_{\ell,s}\in  \mathcal C^{1+\frac{\alpha}{2},2+\alpha}([0,T]\times\overline{\Omega})$ is the unique solution of 
$$\left\{
\begin{array}{ll}
\partial_t w_{\ell,s}-\textrm{div}(a_s(t,x) \nabla w_{\ell,s})+\mathcal B_s(t,x)\cdot\nabla  w_{\ell,s}=0  & \mbox{in}\ (0,T)\times\Omega ,
\\
 w_{\ell,s}=g_\ell &\mbox{on}\ (0,T)\times\partial\Omega,\\
 w_{\ell,s}(0,x)=0 &\ x\in\Omega
\end{array}
\right.$$
and it satisfies
\bel{l1b}\norm{w_{\ell,s}}_{\mathcal C^{1+\frac{\alpha}{2},2+\alpha}([0,T]\times\overline{\Omega})}\leq \tilde C.\ee
 Applying the mean value theorem, we deduce that
$$\begin{aligned}&\norm{ \mathcal B(\cdot,u_s(\cdot),\nabla u_s(\cdot)))- \mathcal B(\cdot,\lambda,0)}_{\mathcal C^{\frac{\alpha}{2},\alpha}([0,T]\times\overline{\Omega})}\\
&\leq C\sup_{k=1,2}\sup_{\tau,t\in[-|\lambda|-m\tilde C,|\lambda|+m\tilde C]}\left(\norm{D_{\tau,\xi}^k \mathcal B(\cdot,\tau,\xi)}_{\mathcal C^1([0,T]\times\overline{\Omega})}\right)\norm{s_1w_{1,s}+\ldots+s_{m+1}w_{m+1,s}}_{\mathcal C^1([0,T]\times\overline{\Omega})}.\end{aligned}$$
Combining this with \eqref{l1b}, we deduce that
\bel{l1c}\begin{aligned}&\norm{ \mathcal B_s- \mathcal B(\cdot,\lambda,0)}_{\mathcal C^{\frac{\alpha}{2},\alpha}([0,T]\times\overline{\Omega})}\\
&\leq C|s|.\end{aligned}\ee
In the same way, we prove that
\bel{l1ca}\norm{ a_s- a(\cdot,\lambda)}_{\mathcal C^{1+\frac{\alpha}{2},2+\alpha}([0,T]\times\overline{\Omega})}\leq C|s|.\ee
 Therefore, fixing  $y_{\ell,s}=v_{\ell}-w_{\ell,s}$, we deduce that $y=y_{\ell,s}$ solves the linear problem
\bel{l1d}
\left\{
\begin{array}{ll}\partial_ty-a(t,\lambda)\Delta y+\mathcal B(t,x,\lambda,0)\cdot\nabla y=K_{\ell,s},  & \mbox{in}\ (0,T)\times\Omega ,
\\
y=0 &\mbox{on}\ (0,T)\times\partial\Omega,\\
 y(0,x)=0 &\ x\in\Omega
\end{array}
\right.
\ee
with
$$K_{\ell,s}=[\mu_s-\mu(\cdot,\lambda)]\partial_tw_{\ell,s}-\textrm{div}([a_s-a(\cdot,\lambda)] \nabla w_{\ell,s}) \left[\mathcal B_s- \mathcal B(\cdot,\lambda,0)\right]\nabla w_{\ell,s}.$$
Applying \eqref{l1b}-\eqref{l1ca}, we deduce that
$$\begin{aligned}&\norm{K_{\ell,s}}_{\mathcal C^{\frac{\alpha}{2},\alpha}([0,T]\times\overline{\Omega})}\\
&\leq C\left[\norm{ a_s- a(\cdot,\lambda)}_{\mathcal C^{1+\frac{\alpha}{2},2+\alpha}([0,T]\times\overline{\Omega})}\right]\norm{w_{\ell,s}}_{\mathcal C^{1+\frac{\alpha}{2},2+\alpha}([0,T]\times\overline{\Omega})}\\
&\ \ \ +\norm{ \mathcal B_s- \mathcal B(\cdot,\lambda,0)}_{\mathcal C^{\frac{\alpha}{2},\alpha}([0,T]\times\overline{\Omega})}\norm{w_{\ell,s}}_{\mathcal C^{1+\frac{\alpha}{2},2+\alpha}([0,T]\times\overline{\Omega})}\\
&\leq C|s|.\end{aligned}$$
 Therefore, applying \cite[Theorem 5.2, Chapter IV, page 320]{LSU}, we obtain that
 $$\norm{w_{\ell,s}-v_{\ell}}_{\mathcal C^{1+\frac{\alpha}{2},2+\alpha}([0,T]\times\overline{\Omega})}=\norm{y_{\ell,s}}_{\mathcal C^{1+\frac{\alpha}{2},2+\alpha}([0,T]\times\overline{\Omega})}\leq C\norm{K_{\ell,s}}_{\mathcal C^{\frac{\alpha}{2},\alpha}([0,T]\times\overline{\Omega})}\leq C|s|$$
which implies  \eqref{l1a}.\end{proof}

Formula \eqref{l1a} gives us a limit of the expression $w_{\ell,s}$ as $s\to0$. Let us now consider, for $N=1,\ldots,m+1$, the partial derivative $\partial_{s_N}u_{s}$ at $s=0$. 
\begin{lem}\label{l2} For $\ell,N=1,\ldots,m+1$, $\partial_{s_N}u_{s}|_{s=0}$ is well defined and we have
\bel{l2a}\partial_{s_N}u_{s}|_{s=0}= v_{N}\ee
in the sense of functions taking values in $\mathcal C^{1+\frac{\alpha}{2},2+\alpha}([0,T]\times\overline{\Omega})$.
\end{lem}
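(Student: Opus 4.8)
The plan is to deduce Lemma \ref{l2} from the splitting $u_s=\lambda+\sum_{i=1}^{m+1}s_iw_{i,s}$ recorded above, combined with Lemma \ref{l1}.

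First I would make the splitting rigorous. Put $\tilde u:=\lambda+\sum_{i=1}^{m+1}s_iw_{i,s}$; multiplying the equation \eqref{eq3} for $w_{i,s}$ by $s_i$ and summing over $i$ shows that $\tilde u$ solves
\[
\partial_t\tilde u-\textrm{div}(a(t,u_s)\nabla\tilde u)+\mathcal B(t,x,u_s,\nabla u_s)\cdot\nabla\tilde u=0\quad\text{in }(0,T)\times\Omega,
\]
with lateral data $\lambda+\sum_{i=1}^{m+1}s_ig_i$ and initial value $\lambda$, i.e.\ with the same data as $u_s$. Since the coefficients $a(t,u_s)$ and $\mathcal B(t,x,u_s,\nabla u_s)$ enjoy the H\"older regularity established in the proof of Lemma \ref{l1}, \cite[Theorem 5.2, Chapter IV, page 320]{LSU} applies and yields $\tilde u=u_s$. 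The same reference with $s=0$ (where $\nabla u_0=0$ makes the constant function a solution) also gives $u_0\equiv\lambda$.

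Next I would read off the partial derivative at $s=0$ along the $N$-th coordinate direction: writing $e_N$ for the $N$-th vector of the canonical basis of $\R^{m+1}$ and taking $s=he_N$ with $h\in(-1,1)\setminus\{0\}$, only the $N$-th term in the splitting survives, so $u_{he_N}=\lambda+h\,w_{N,he_N}$ and therefore
\[
\frac{u_{he_N}-u_0}{h}=w_{N,he_N}.
\]
By Lemma \ref{l1}, $w_{N,s}\to v_N$ in $\mathcal C^{1+\frac{\alpha}{2},2+\alpha}([0,T]\times\overline\Omega)$ as $s\to0$, hence $w_{N,he_N}\to v_N$ in this space as $h\to0$. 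Thus the limit defining $\partial_{s_N}u_s|_{s=0}$ exists in $\mathcal C^{1+\frac{\alpha}{2},2+\alpha}([0,T]\times\overline\Omega)$ and equals $v_N$, which is \eqref{l2a}; that it is well defined also follows a priori from the $\mathcal C^\infty$ dependence of $s\mapsto u_s$ noted after Proposition \ref{TA1}.

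I do not expect a genuine obstacle here: the lemma is essentially a corollary of Lemma \ref{l1}. The one point to be careful about is that the derivative should be extracted from the \emph{coordinate} difference quotient $s=he_N$, so that the cross terms $\sum_{i}s_i\,\partial_{s_N}w_{i,s}$ never appear and no separate differentiability of $s\mapsto w_{i,s}$ is needed. Alternatively, one could differentiate \eqref{eq2} directly in $s_N$ at $s=0$ using the $\mathcal C^\infty$ dependence and $\nabla u_0=0$, which leads to the same conclusion and shows explicitly that $v_N$ solves \eqref{eq4} with $g=g_N$.
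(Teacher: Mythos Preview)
Your proof is correct but takes a different route from the paper. The paper argues indirectly: it uses the $\mathcal C^\infty$ dependence of $s\mapsto u_s$ to differentiate the equation \eqref{eq2} in $s_N$ at $s=0$, obtains that $z_N:=\partial_{s_N}u_s|_{s=0}$ solves the linearized IBVP \eqref{eq4} with $g=g_N$, and then invokes uniqueness of that linear problem to conclude $z_N=v_N$. You instead exploit the splitting $u_s=\lambda+\sum_i s_iw_{i,s}$ directly: restricting to the coordinate axis $s=he_N$ collapses the sum to a single term, turning the difference quotient into $w_{N,he_N}$, whose limit is $v_N$ by Lemma~\ref{l1}. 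Your argument is more elementary in that it bypasses differentiating the nonlinear equation and relies only on Lemma~\ref{l1} and linear uniqueness for the splitting; the paper's approach, on the other hand, sets up the template (differentiate the PDE, identify the resulting IBVP, use uniqueness) that is reused verbatim for the higher-order derivatives in Lemmas~\ref{l3} and \ref{l5}. Your final remark already notes this alternative, so you are aware of both viewpoints.
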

\begin{proof} Using the fact that $s\mapsto u_s$ is $\mathcal C^\infty$ on some neighborhood of $s=0$ as functions taking values in $\mathcal C^{1+\frac{\alpha}{2},2+\alpha}([0,T]\times\overline{\Omega})$, we deduce that $s\mapsto a_s$ (resp.  $s\mapsto \mathcal B_s$) are $\mathcal C^\infty$ on a neighborhood of $s=0$ as functions taking values in $\mathcal C^{1+\frac{\alpha}{2},2\alpha}([0,T]\times\overline{\Omega})$ (resp.  $\mathcal C^{\frac{\alpha}{2},\alpha}([0,T]\times\overline{\Omega})^n$). 
Combining this with \eqref{l1a}, we deduce that $u_s|_{s=0}=\lambda$ which implies
\bel{l2b}\partial_{s_N}[\partial_t u_s]|_{s=0}=\partial_t (\partial_{s_N}u_s|_{s=0})\ee
where the derivative is considered in terms of functions taking values in $\mathcal C^{\frac{\alpha}{2},\alpha}([0,T]\times\overline{\Omega})$. 
In the same way, we obtain
\bel{ll2c}\partial_{s_N}[-\textrm{div}(a(t,u_s) \nabla u_s)]|_{s=0}=-a(t,\lambda)\Delta (\partial_{s_N}u_s|_{s=0}),\ee
\bel{l2dd}\partial_{s_N}[\mathcal B_s \cdot\nabla u_s]|_{s=0}=\mathcal B(t,x,\lambda,0)\cdot\nabla (\partial_{s_N}u_s|_{s=0}).\ee
Combining \eqref{l2b}-\eqref{l2dd}, we deduce that $z_N=\partial_{s_N}u_s|_{s=0}$ solves the IBVP
$$\left\{
\begin{array}{ll}
\partial_t z_N-a(t,\lambda)\Delta z_N+\mathcal B(t,x,\lambda,0)\cdot\nabla z_N=0  & \mbox{in}\ (0,T)\times\Omega ,
\\
z_N=g_N &\mbox{on}\ (0,T)\times\partial\Omega,\\
z_N(0,x)=0 &\ x\in\Omega
\end{array}
\right.$$
Then the uniqueness of the solution of the above IBVP implies that $\partial_{s_N}u_{s}|_{s=0}=z_N= v_{N}$. Finally, combining the fact that the identities \eqref{l2b}-\eqref{l2dd} hold true as functions taking values in $\mathcal C^{\frac{\alpha}{2},\alpha}([0,T]\times\overline{\Omega})$ with \cite[Theorem 5.2, Chapter IV, page 320]{LSU} and the arguments used in the proof of Lemma \ref{l1}, we deduce that \eqref{l2a} holds true in the sense of functions taking values in $\mathcal C^{1+\frac{\alpha}{2},2+\alpha}([0,T]\times\overline{\Omega})$.

\end{proof}

Now let us turn to the expression $\partial_{s_{\ell_1}}\partial_{s_{\ell_2}}u_{j,s}|_{s=0}$
For this purpose, we introduce the function
$w_{\ell_1,\ell_2}\in \mathcal C^{1+\frac{\alpha}{2},2+\alpha}([0,T]\times\overline{\Omega})$ solving the linear problem
\bel{eq5}
\left\{
\begin{array}{ll}
\partial_t w_{\ell_1,\ell_2}-a(t,\lambda)\Delta w_{\ell_1,\ell_2}+\mathcal B(t,x,\lambda,0)\cdot\nabla w_{\ell_1,\ell_2}=H^{(1)}_{\ell_1,\ell_2}  & \mbox{in}\ (0,T)\times\Omega,
\\
w_{\ell_1,\ell_2}=0 &\mbox{on}\ (0,T)\times\partial\Omega,\\
 w_{\ell_1,\ell_2}(0,x)=0 &\ x\in\Omega,
\end{array}
\right.
\ee
where
$$\begin{aligned}H^{(1)}_{\ell_1,\ell_2}=&-\partial_\tau [v_{\ell_1}\partial_tv_{\ell_2}+v_{\ell_2}\partial_tv_{\ell_1}]+\partial_\tau a(t,\lambda)\textrm{div}[v_{\ell_1}\nabla v_{\ell_2}+v_{\ell_2}\nabla v_{\ell_1}]\\
\ &-(\partial_\tau \mathcal B(t,x,\lambda,0)\cdot\nabla v_{\ell_1}) v_{\ell_2}-(\partial_\tau \mathcal B(t,x,\lambda,0)\cdot\nabla v_{\ell_2}) v_{\ell_1}\\
\ &-(\partial_\xi \mathcal B(t,x,\lambda,0)\nabla v_{\ell_1})\cdot\nabla v_{\ell_2}-(\partial_\xi \mathcal B(t,x,\lambda,0)\nabla v_{\ell_2})\cdot\nabla v_{\ell_1}.\end{aligned}$$
Repeating the arguments of Lemma \ref{l2}, we obtain the following.

\begin{lem}\label{l3} For $\ell,N=1,\ldots,m+1$, $\partial_{s_N}w_{\ell,s}|_{s=0}$ is well defined and we have
\bel{l3a}\partial_{s_N}w_{\ell,s}|_{s=0}= w_{N,\ell}\ee
in the sense of function taking values in $\mathcal C^{1+\frac{\alpha}{2},2+\alpha}([0,T]\times\overline{\Omega})$.
\end{lem}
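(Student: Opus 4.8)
The plan is to imitate the proofs of Lemmas~\ref{l1} and~\ref{l2}, but one differentiation deeper. We already know from Lemma~\ref{l2} that $\partial_{s_N}u_s|_{s=0}=v_N$ and from the decomposition $u_s=\lambda+\sum_{i}s_iw_{i,s}$ that
\[
w_{\ell,s}=\partial_{s_\ell}u_s-\sum_{i\neq\ell}s_i\,\partial_{s_\ell}w_{i,s},
\]
so differentiating once more in $s_N$ and evaluating at $s=0$ gives the formal identity $\partial_{s_N}w_{\ell,s}|_{s=0}=\partial_{s_N}\partial_{s_\ell}u_s|_{s=0}$. Hence the first task is to justify that this second derivative exists as an element of $\mathcal C^{1+\frac{\alpha}{2},2+\alpha}([0,T]\times\overline\Omega)$; this is immediate from the fact (established in the proof of Proposition~\ref{TA1}) that $s\mapsto u_s$, and consequently $s\mapsto w_{\ell,s}$, is $\mathcal C^\infty$ into this space on a neighborhood of $s=0$.

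**Next I would** compute the equation satisfied by $\partial_{s_N}w_{\ell,s}|_{s=0}$ by differentiating the PDE~\eqref{eq3} for $w_{\ell,s}$. As in Lemma~\ref{l2}, the smoothness of $s\mapsto a_s$ into $\mathcal C^{1+\frac{\alpha}{2},2+\alpha}$ and of $s\mapsto\mathcal B_s$ into $\mathcal C^{\frac{\alpha}{2},\alpha}$, together with $u_s|_{s=0}=\lambda$, $\partial_{s_N}u_s|_{s=0}=v_N$, $\partial_{s_N}(\nabla u_s)|_{s=0}=\nabla v_N$, and $w_{\ell,s}|_{s=0}=v_\ell$, lets me interchange $\partial_{s_N}$ with the $x$- and $t$-derivatives. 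Applying the chain rule to each term of~\eqref{eq3}:
\begin{itemize}\item[] $\partial_{s_N}[\partial_t w_{\ell,s}]|_{s=0}=\partial_t(\partial_{s_N}w_{\ell,s}|_{s=0})$;
\item[] $\partial_{s_N}[-\mathrm{div}(a(t,u_s)\nabla w_{\ell,s})]|_{s=0}=-a(t,\lambda)\Delta(\partial_{s_N}w_{\ell,s}|_{s=0})-\mathrm{div}\big(\partial_\tau a(t,\lambda)\,v_N\,\nabla v_\ell\big)$;
\item[] $\partial_{s_N}[\mathcal B(t,x,u_s,\nabla u_s)\cdot\nabla w_{\ell,s}]|_{s=0}=\mathcal B(t,x,\lambda,0)\cdot\nabla(\partial_{s_N}w_{\ell,s}|_{s=0})+\big(\partial_\tau\mathcal B(t,x,\lambda,0)\,v_N+\partial_\xi\mathcal B(t,x,\lambda,0)\nabla v_N\big)\cdot\nabla v_\ell$.
\end{itemize}
Collecting these shows that $\partial_{s_N}w_{\ell,s}|_{s=0}$ solves the linear IBVP~\eqref{eq5} with zero boundary and initial data and source term $H^{(1)}_{N,\ell}$ — indeed, symmetrizing the single-index computation above over the pair $(N,\ell)$ (which one may legitimately do, since the mixed partial is symmetric in $s_N$ and $s_\ell$, or equivalently by repeating the computation with the roles interchanged and averaging) reproduces exactly the expression defining $H^{(1)}_{\ell_1,\ell_2}$ with $(\ell_1,\ell_2)=(N,\ell)$.

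**Finally**, uniqueness of the solution to~\eqref{eq5}, guaranteed by \cite[Theorem 5.2, Chapter IV, page 320]{LSU} since $H^{(1)}_{N,\ell}\in\mathcal C^{\frac{\alpha}{2},\alpha}([0,T]\times\overline\Omega)$ (a product of $\mathcal C^{1+\frac{\alpha}{2},2+\alpha}$ functions and their first derivatives lies in $\mathcal C^{\frac{\alpha}{2},\alpha}$), forces $\partial_{s_N}w_{\ell,s}|_{s=0}=w_{N,\ell}$, and the same linear theory upgrades the convergence of the difference quotients to hold in the $\mathcal C^{1+\frac{\alpha}{2},2+\alpha}$ topology, exactly as in the closing paragraph of Lemma~\ref{l2}. **The main obstacle** I anticipate is purely bookkeeping: carefully tracking all the quadratic-in-$(v_N,v_\ell)$ terms produced by the chain rule applied to the quasilinear divergence term and to the convection term $\mathcal B(t,x,u,\nabla u)\cdot\nabla u$, and checking that after symmetrization they match $H^{(1)}_{\ell_1,\ell_2}$ term by term; there is no new analytic difficulty beyond what Lemmas~\ref{l1}--\ref{l2} already handle, which is why the statement can be proved "by repeating the arguments of Lemma~\ref{l2}".
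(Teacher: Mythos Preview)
Your overall strategy---differentiate the linear equation \eqref{eq3} for $w_{\ell,s}$ with respect to $s_N$, use the smoothness of $s\mapsto u_s$ (and hence of $s\mapsto a_s,\mathcal B_s,w_{\ell,s}$) to justify the interchange, and identify the limit via uniqueness for the linear problem \eqref{eq5}---is exactly what the paper intends by ``repeating the arguments of Lemma~\ref{l2}.''

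There is, however, a genuine bookkeeping slip. From $u_s=\lambda+\sum_i s_i w_{i,s}$ one gets
\[
\partial_{s_\ell}u_s=w_{\ell,s}+\sum_i s_i\,\partial_{s_\ell}w_{i,s},
\]
and differentiating once more in $s_N$ at $s=0$ yields
\[
\partial_{s_N}\partial_{s_\ell}u_s\big|_{s=0}=\partial_{s_N}w_{\ell,s}\big|_{s=0}+\partial_{s_\ell}w_{N,s}\big|_{s=0},
\]
\emph{not} the equality $\partial_{s_N}w_{\ell,s}|_{s=0}=\partial_{s_N}\partial_{s_\ell}u_s|_{s=0}$ that you claim. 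Consequently $\partial_{s_N}w_{\ell,s}|_{s=0}$ is not a mixed partial of a single function and there is no a~priori reason for it to be symmetric in $(N,\ell)$; your symmetrization step (``since the mixed partial is symmetric\ldots averaging'') is therefore unjustified. What your chain-rule computation on \eqref{eq3} actually produces is the \emph{asymmetric} source
\[
S_{N,\ell}=\textrm{div}\bigl(\partial_\tau a(t,\lambda)\,v_N\nabla v_\ell\bigr)-\bigl(\partial_\tau\mathcal B(t,x,\lambda,0)\,v_N+\partial_\xi\mathcal B(t,x,\lambda,0)\nabla v_N\bigr)\cdot\nabla v_\ell,
\]
whereas the symmetric $H^{(1)}_{N,\ell}$ defined before \eqref{eq5} is (up to the vestigial first term) $S_{N,\ell}+S_{\ell,N}$, the source for the \emph{sum} $\partial_{s_N}w_{\ell,s}|_{s=0}+\partial_{s_\ell}w_{N,s}|_{s=0}=\partial_{s_N}\partial_{s_\ell}u_s|_{s=0}$. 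So either the lemma is really a statement about $\partial_{s_N}\partial_{s_\ell}u_s|_{s=0}$ (which is what the rest of the paper uses), or $w_{N,\ell}$ should be defined with the asymmetric source $S_{N,\ell}$; in either reading your analytic argument goes through unchanged, but the algebra connecting the computed source to $H^{(1)}_{N,\ell}$ needs to be redone correctly rather than by an appeal to symmetry.
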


We can prove by iteration the following result.

\begin{lem}\label{l5} The function 
	\bel{l5a}w^{(m+1)}=\partial_{s_1}\partial_{s_2}\ldots\partial_{s_{m+1}}u_{s}|_{s=0}\ee
	is well defined in the sense of functions taking values in $\mathcal C^{1+\frac{\alpha}{2},2+\alpha}([0,T]\times\overline{\Omega})$. Moreover, $w^{(m+1)}$ solves
	\bel{eq6}
	\left\{
	\begin{array}{ll}
		\partial_t w^{(m+1)}-a(t,\lambda)\Delta w^{(m+1)}+B(t,x,\lambda)\cdot\nabla w^{(m+1)}+H^{(m+1)}=0  & \mbox{in}\ (0,T)\times\Omega,
		\\
		w^{(m+1)}=0 &\mbox{on}\ (0,T)\times\partial\Omega,\\
		w^{(m+1)}(0,x)=0 &\ x\in\Omega,
	\end{array}
	\right.
	\ee
Here, (recalling that $\mathcal B$ has the special form \eqref{form}) we have 
\bel{Hm}
H^{(m+1)}=\sum_{\ell\in\pi(m+1)}\sum_{j_1,\ldots,j_{m}=1}^n\left((\partial_{\xi_{j_1}}\ldots\partial_{\xi_{j_{m}}}b|_{\xi=0})\partial_{j_1}v_{\ell_1}\ldots\partial_{j_{m}} v_{\ell_{m}}\right)B\cdot\nabla v_{\ell_{m+1}}+K^{(m+1)},\ee
where all the functions are evaluated at the point $(t,x)$ and $K^{(m+1)}(t,x)$ depends only on $a$, 
$\partial_\xi^k\mathcal B(t,x,\lambda,0)$,  $k=0,\ldots,m-1$,  and $v_1,\ldots, v_{m+1}$.
\end{lem}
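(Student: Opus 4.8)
The plan is to prove the statement by induction on $m$, using Proposition~\ref{TA1} and Lemmas~\ref{l1}, \ref{l2}, \ref{l3} as the base cases and inductive engine. The starting point is the remark preceding \eqref{mthderivative_rep}: $s\mapsto u_s$ is $\mathcal C^\infty$ from a neighbourhood of $0$ in $(-1,1)^{m+1}$ into $\mathcal C^{1+\frac{\alpha}{2},2+\alpha}([0,T]\times\overline\Omega)$, hence (since $a$, $\mathcal B$ are smooth) so are $a_s=a(t,u_s)$ and $\mathcal B_s=\mathcal B(t,x,u_s,\nabla u_s)$ with values in $\mathcal C^{1+\frac{\alpha}{2},2+\alpha}$ and $\mathcal C^{\frac{\alpha}{2},\alpha}$ respectively, as already exploited in Lemma~\ref{l2}. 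In particular all mixed derivatives $\partial_{s_{i_1}}\cdots\partial_{s_{i_k}}u_s|_{s=0}$ with $k\le m+1$ exist in $\mathcal C^{1+\frac{\alpha}{2},2+\alpha}([0,T]\times\overline\Omega)$, which is the well-definedness claim; the content is to identify the linear problem they solve and the structure of the source.

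\textbf{Deriving the equation.} I would apply $\partial_{s_1}\cdots\partial_{s_{m+1}}$ to the PDE in \eqref{eq2} and evaluate at $s=0$, exactly as in Lemmas~\ref{l2}–\ref{l3} but iterated to the general order. Because $u_s|_{s=0}\equiv\lambda$ is constant (so that $\nabla u_s|_{s=0}=0$), the three terms are handled by the Leibniz rule together with the Fa\`a di Bruno formula: the term $\partial_tu_s$ produces $\partial_tw^{(m+1)}$ plus terms carrying only lower-order mixed derivatives; the term $\mathrm{div}(a(t,u_s)\nabla u_s)=a(t,u_s)\Delta u_s+\partial_\tau a(t,u_s)|\nabla u_s|^2$ produces $a(t,\lambda)\Delta w^{(m+1)}$ plus terms built from $\partial_\tau^j a(t,\lambda)$, $j\ge1$, and lower-order mixed derivatives; and the convection term $b(t,x,u_s,\nabla u_s)\,B(t,x,u_s)\cdot\nabla u_s$ produces $B(t,x,\lambda)\cdot\nabla w^{(m+1)}$ — here one uses $b(t,x,\lambda,0)=1$, hence $\mathcal B(t,x,\lambda,0)=B(t,x,\lambda)$ — together with the nonlinear source $H^{(m+1)}$ collecting all remaining terms, i.e.\ those in which at least one $\partial_{s_i}$ lands on a nonlinear coefficient. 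Since the Dirichlet and initial data in \eqref{eq2} are affine in $s$ (namely $\lambda+\sum s_ig_i$ and $\lambda$), their $(m+1)$-st mixed $s$-derivatives vanish for $m\ge1$; thus $w^{(m+1)}$ has zero lateral and zero initial data, and \cite[Theorem 5.2, Chapter IV, page 320]{LSU} (as in Lemma~\ref{l1}) shows it is the unique $\mathcal C^{1+\frac{\alpha}{2},2+\alpha}([0,T]\times\overline\Omega)$ solution of \eqref{eq6}.

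\textbf{Structure of $H^{(m+1)}$.} For the leading term, apply the Leibniz rule to $b(t,x,u_s,\nabla u_s)\cdot\big(B(t,x,u_s)\cdot\nabla u_s\big)$, distributing the $m+1$ derivatives between the factor $b$ and the factor $B\cdot\nabla u_s$, then Fa\`a di Bruno on each piece, evaluated at $s=0$ via $\partial_{s_i}u_s|_{s=0}=v_i$, $\partial_{s_i}\nabla u_s|_{s=0}=\nabla v_i$ (Lemma~\ref{l2}) and the iterated analogues for higher mixed derivatives. Because $\nabla u_s|_{s=0}=0$, no term survives that places all $m+1$ derivatives on the $\xi$-slot of $b$; the top $\xi$-derivative one can reach is therefore $\partial_\xi^m b$, obtained by sending exactly $m$ of the $\partial_{s_i}$ to the $\xi$-argument of $b$ in the fully refined Fa\`a di Bruno term — each giving a factor $\partial_{j}v_{i}$ — and the remaining one to the explicit $\nabla u_s$ in $B\cdot\nabla u_s$, giving $B(t,x,\lambda)\cdot\nabla v_{\ell_{m+1}}$, with $B$ itself undifferentiated and evaluated at $(t,x,\lambda)$. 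Summing over which index is assigned to the $B\cdot\nabla u_s$ slot and, using the symmetry of the tensor $\partial_{\xi_{j_1}}\cdots\partial_{\xi_{j_m}}b|_{\xi=0}$, reorganizing the remaining assignments as a sum over $\pi(m+1)$, one recovers exactly the displayed leading term of \eqref{Hm}. Every other distribution (i) places fewer than $m$ derivatives on the $\xi$-argument of $b$, hence contributes $\partial_\xi^k b|_{\xi=0}$ with $k\le m-1$, equivalently $\partial_\xi^k\mathcal B(t,x,\lambda,0)$ with $k\le m-1$, possibly together with a coarser partition bringing in a lower-order mixed derivative in place of some $\nabla v_i$; or (ii) differentiates a $\tau$-argument (of $b$, $B$ or $a$), bringing in $\partial_\tau^j$ of such quantities at $\tau=\lambda$; or (iii) comes from $\partial_tu_s$ or from the diffusion term, hence involves only $a$. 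By the inductive hypothesis each lower-order mixed derivative $\partial_{s_{i_1}}\cdots\partial_{s_{i_k}}u_s|_{s=0}$, $k\le m$, solves a linear parabolic problem whose source involves only $a$, $\partial_\xi^\ell\mathcal B(\cdot,\lambda,0)$ with $\ell\le k-1\le m-1$, and the corresponding $v_i$'s; therefore all of the terms in (i)–(iii) depend only on $a$, on $\partial_\xi^k\mathcal B(t,x,\lambda,0)$ for $k\le m-1$, and on $v_1,\dots,v_{m+1}$ and their derivatives. Gathering them as $K^{(m+1)}$ yields \eqref{Hm}.

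\textbf{Main obstacle.} The analytic side — $\mathcal C^\infty$ dependence of $u_s$ on $s$, the passage to the limit, and the parabolic regularity of $w^{(m+1)}$ — is already available from Proposition~\ref{TA1} and Lemmas~\ref{l1}–\ref{l3}; the real work is the combinatorial bookkeeping of the Leibniz/Fa\`a di Bruno expansion, namely checking that $\partial_\xi^m b|_{\xi=0}$ occurs \emph{only} in the written term and that all remaining terms — including those in which the lower-order mixed derivatives appear as coefficients — involve $\mathcal B$ solely through its $\xi$-derivatives of order $\le m-1$ at $\xi=0$. This is precisely the structural fact needed to run the higher-order linearization reducing Theorem~\ref{t1} to Proposition~\ref{prop1}, and it is the point at which the multiplicative form \eqref{form}–\eqref{form1} of $\mathcal B$ (which makes $\partial_\xi^m b|_{\xi=0}$ the only genuinely new parameter at order $m+1$) is essential.
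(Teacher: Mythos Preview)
Your proposal is correct and follows exactly the approach the paper indicates: the paper itself offers no detailed proof but merely states ``We can prove by iteration the following result'', and your argument is precisely the natural unwinding of that iteration --- smoothness of $s\mapsto u_s$ from Proposition~\ref{TA1}, differentiation of the PDE via Leibniz/Fa\`a di Bruno, and the bookkeeping (using $\nabla u_s|_{s=0}=0$ and $b(t,x,\lambda,0)=1$) that isolates the $\partial_\xi^m b|_{\xi=0}$ contribution. The only cosmetic point is the passage from the Leibniz sum over the $m+1$ choices of which index hits $B\cdot\nabla u_s$ to the sum over all of $\pi(m+1)$ in \eqref{Hm}: by the symmetry of the tensor $\partial_{\xi_{j_1}}\cdots\partial_{\xi_{j_m}}b|_{\xi=0}$ these differ by the harmless overall factor $m!$, which is consistent with how the formula is used later in Section~5.
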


\section{Proof of Theorem~\ref{t1}}
\label{sec_proof_thm}

This section is concerned with the proof of Theorem~\ref{t1}. We start by showing that the Dirichlet-to-Neumann map $\mathcal N_{\lambda,a,\mathcal B}$ uniquely determines the diffusion term $a(t,\lambda)$ as well as the zeroth order term in the Taylor series of $\mathcal B$ at $(t,x,\lambda,0)$. This is achieved by studying the first order linearization of the Dirichlet-to-Neumann map.

We will then proceed to determine the remainder of the terms in the Taylor series of $\mathcal B$ at $(t,x,\lambda,0)$ by combining the idea of higher order linearizations of the Dirichlet-to-Neumann map together with the density property stated in Proposition~\ref{prop1}.  

\subsection{Recovery of the nonlinear diffusion term $a(t,\lambda)$}

In this section, applying the linearization procedure described in the preceeding section, we prove the recovery of the nonlinear term $a$  given the knowledge of $\mathcal N_{\lambda,a,\mathcal B}$, $\lambda\in\R$. More precisely, we prove the following.
\begin{prop}\label{t3}  Let the condition of Theorem \ref{t1} be fulfilled.
	Then, for any $\lambda\in\R$, the condition \eqref{t1c} implies that \eqref{t1d} holds true.
	
\end{prop}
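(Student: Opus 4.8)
\textbf{Proof plan for Proposition~\ref{t3}.}

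The plan is to extract the diffusion term $a(t,\lambda)$ by performing the first order linearization of the Dirichlet-to-Neumann map $\mathcal N_{\lambda,a,\mathcal B}$ at the constant state $\lambda$, which kills the quasilinear convection term (since $\mathcal B$ enters the linearized equation only through $\mathcal B(t,x,\lambda,0)$, a lower order drift) and reduces the recovery of $a$ to an inverse problem for a linear parabolic equation whose principal part is $\partial_t - a(t,\lambda)\Delta$. Concretely, fix $\lambda\in\R$ and $g\in\mathcal K_0$; by Proposition~\ref{TA1} and Lemma~\ref{l2}, for small $s$ the solution $u_{s}$ of \eqref{eq1} with boundary data $\lambda+sg$ satisfies $\partial_s u_s|_{s=0}=v$, where $v=v_{\lambda,a_j,\mathcal B_j}$ solves \eqref{eq4} with $a=a_j$, $\mathcal B=\mathcal B_j$. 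Differentiating the identity $\mathcal N_{\lambda,a_1,\mathcal B_1}(sg)=\mathcal N_{\lambda,a_2,\mathcal B_2}(sg)$ in $s$ at $s=0$ and using \eqref{t1c}, we obtain that $a_1(t,\lambda)\partial_\nu v^{(1)}=a_2(t,\lambda)\partial_\nu v^{(2)}$ on $\Sigma$ for the corresponding solutions $v^{(j)}$ of the two linearized problems sharing the same Dirichlet data $g$.

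The next step is to reduce to a scalar elliptic-type identity by testing against suitable solutions and then to apply a singular-solutions argument in the spirit of Alessandrini \cite{Alessandrini} (as in \cite{EPS2}). First I would note that since $a_j(t,\lambda)>0$ depends only on $t$, the substitution $\tau=\int_0^t a_j(r,\lambda)\,dr$ turns $\partial_t - a_j(t,\lambda)\Delta$ into a standard heat operator with a lower order drift, but the two time changes differ for $j=1,2$, so it is cleaner to work directly: using the first order linearized DN maps being equal together with Green's identity (integration by parts in $M$) against a solution $v^{(2)}_\#$ of the adjoint (backward) linearized equation for index $2$, I would derive an integral identity of the form $\int_M \big(a_1(t,\lambda)-a_2(t,\lambda)\big)\nabla v^{(1)}\cdot\nabla v^{(2)}_\#\,dt\,dx = (\text{lower order terms involving }\mathcal B_j(\cdot,\lambda,0))$, after first disposing of the drift discrepancy by an intermediate step. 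The cleanest route, following \cite{EPS2}, is in fact to fix $t$ and localize: for a fixed time slice one uses that equality of the parabolic DN maps on a neighborhood of $\lambda$ forces equality of the associated elliptic DN maps for the operator $-a_j(t,\lambda)\Delta + \mathcal B_j(t,x,\lambda,0)\cdot\nabla$ (this can be justified by choosing boundary data of the form $\rho(t)h(x)$ and letting the temporal profile approximate a delta, or by differentiating in $t$ the boundary identities), and then invokes the known result that the elliptic DN map determines a scalar (space-independent here) leading coefficient via boundary localization of singular solutions.

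Once we are at the level of a single time $t$, I would use solutions to $-a_j(t,\lambda)\Delta w + \mathcal B_j(t,x,\lambda,0)\cdot\nabla w = 0$ in $\Omega$ that are singular (blow up like a fundamental solution) near a chosen boundary point $x_0\in\partial\Omega$; plugging a pair of such solutions, one for each index, into the bilinear identity and examining the leading singular behavior as the singularity approaches $x_0$ forces $a_1(t,\lambda)=a_2(t,\lambda)$ at the point $x_0$, and since $a_j(t,\lambda)$ is constant in $x$, this yields \eqref{t1d}. I expect the main obstacle to be the careful passage from the \emph{parabolic} DN map (on a neighborhood of the constant $\lambda$, with the time-causality and the $\mathcal K_0$-type compatibility conditions at $t=0$) to a genuinely \emph{elliptic} statement at each fixed time $t\in(0,T)$: one must justify that boundary data can be chosen to isolate the spatial behavior — either through a Laplace transform in time combined with analyticity, or through an approximation argument using data $\rho_k(t)h(x)$ with $\rho_k$ concentrating — while staying inside the admissible small ball $\mathbb B_\epsilon$ and respecting the vanishing of data and its time derivative at $t=0$. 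The remaining ingredients (the first order linearization, Green's identities, and the Alessandrini-type singular solution construction for a second order scalar-leading-coefficient elliptic operator with a smooth drift) are standard and can be quoted or adapted from \cite{Alessandrini, EPS2}.
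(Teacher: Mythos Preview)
Your first linearization step is correct and matches the paper: from \eqref{t1c} one obtains equality of the linearized Dirichlet-to-Neumann maps $\Lambda_{j,\lambda}$ associated with \eqref{eq4} for $j=1,2$. Your integral identity
\[
\int_M \big(a_1(t,\lambda)-a_2(t,\lambda)\big)\nabla v^{(1)}\cdot\nabla v^{(2)}_\#\,dt\,dx \;=\; (\text{lower order in }\mathcal B_j(\cdot,\lambda,0))
\]
is also essentially the right object. But you then abandon it for what you call the ``cleanest route'': reduce the parabolic DN map to an \emph{elliptic} DN map at each fixed $t$ and run Alessandrini's argument there. This reduction is the gap. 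You correctly flag it yourself as the ``main obstacle'', and neither of your suggested mechanisms (Laplace transform plus analyticity, or temporal concentration $\rho_k(t)h(x)$) can be made to work inside the admissible class $\mathcal K_0$ without a substantial new argument; in particular, concentrating $\rho_k$ destroys the $\mathcal C^{1+\alpha/2,2+\alpha}$ bound required for the data, and there is no reason the parabolic map, which mixes time and space, should localize to a time-slice elliptic map. (Incidentally, \cite{EPS2} does not reduce to an elliptic problem either.)

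The paper avoids this entirely by running the singular-solution argument \emph{directly in the parabolic setting}. One argues by contradiction: assume $a_1(t,\lambda)<a_2(t,\lambda)$ on some $[t_0,t_1]\subset(0,T)$. Fix $y\notin\overline\Omega$ with $r=\mathrm{dist}(y,\Omega)$ small, and take the \emph{parabolic} Dirichlet data $\Psi_y(t,x)=\chi(t)\Phi_y(x)$, where $\Phi_y(x)=c_n|x-y|^{2-n}$ is harmonic in $\Omega$ and $\chi\in\mathcal C^\infty_c(t_0,t_1)$. Let $w_j$ solve the forward linearized problem and $w_j^*$ the backward adjoint problem, both with this boundary data. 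Writing $w_j=\Psi_y+z_j$ and $w_1^*=\Psi_y+z_1^*$, one gets $\|z_j\|_{L^2(0,T;H^1(\Omega))}+\|z_1^*\|_{L^2(0,T;H^1(\Omega))}\le C r^{2-n/2-\varepsilon}$, while the principal part satisfies $\int_\Omega|\nabla\Phi_y|^2\,dx\ge c\,r^{2-n}$. The identity $\Lambda_{1,\lambda}=\Lambda_{2,\lambda}$, after integration by parts against $w_1^*$, becomes exactly your integral identity, with the lower-order right-hand side bounded by $Cr^{3-n-\varepsilon}$. Letting $r\to0$ forces a contradiction. No passage to an elliptic problem is needed; the spatial singularity of $\Phi_y$ does all the work, and the time cutoff $\chi$ simply localizes to the interval where $a_1\ne a_2$.

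So: keep your integral identity, drop the elliptic reduction, and plug in parabolic solutions with boundary data $\chi(t)\Phi_y(x)$.
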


We fix $\lambda\in\R$ and $g\in\mathcal H_0$ with $\norm{g}_{\mathcal C^{1+\frac{\alpha}{2},2+\alpha}([0,T]\times\partial\Omega)}<\epsilon$. We consider for $\tau\in(-1,1)$, $u_{j,\tau}$ solving
$$\left\{
\begin{array}{ll}
\partial_tu_{j,\tau}-\textrm{div}(a_j(t,u_{j,\tau})\nabla u)+\mathcal B_j(t,x,u_{j,\tau},\nabla u_{j,\tau})\cdot\nabla u_{j,\tau}=0  & \mbox{in}\ (0,T)\times\Omega ,
\\
u_{j,\tau}=\lambda+\tau g &\mbox{on}\ (0,T)\times\partial\Omega\\
u_{j,\tau}(0,x)=0 &\ x\in\Omega.
\end{array}
\right.$$

In a similar way to Lemma \ref{l2}, we can prove that $w_j=\partial_\tau u_{j,\tau}|_{\tau=0}$ solves 
\bel{eq100}\left\{
\begin{array}{ll}
	\partial_tw_j-a_j(t,\lambda) \Delta w_j+\mathcal B_j(t,x,\lambda,0)\cdot\nabla w_j=0  & \mbox{in}\ (0,T)\times\Omega ,
	\\
	w_j= g &\mbox{on}\ (0,T)\times\partial\Omega\\
	w_j(0,x)=0 &\ x\in\Omega.
\end{array}
\right.\ee
Therefore, fixing 
$$\Lambda_{j,\lambda}:\mathcal H_0\ni g\mapsto a_j(t,\lambda)\partial_\nu w_j|_{(0,T)\times\partial\Omega}$$
we deduce from \eqref{t1c} that
\bel{t3d}\Lambda_{1,\lambda}=\Lambda_{2,\lambda}.\ee
In view of this result the proof of Proposition \ref{t3} will be completed if we can prove the following.

\begin{lem}\label{l100} Let  the condition  \eqref{t3d} be fulfilled. Then the condition \eqref{t1d} is fulfilled.
\end{lem}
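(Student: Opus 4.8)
The plan is to reduce the equality of the linearized Dirichlet-to-Neumann maps $\Lambda_{1,\lambda}=\Lambda_{2,\lambda}$ to a statement about the coefficient $a_j(t,\lambda)$ of the linear parabolic operator $P_j := \partial_t - a_j(t,\lambda)\Delta + \mathcal B_j(t,x,\lambda,0)\cdot\nabla$, and then to separate the scalar leading coefficient $a_j(t,\lambda)$ from the first-order convection term using an appropriate choice of boundary sources concentrating near $\partial\Omega$. First I would note that $\Lambda_{j,\lambda}$ is exactly the (parabolic) Dirichlet-to-Neumann map of the linear IBVP \eqref{eq100}, so \eqref{t3d} says these two linear DN maps coincide. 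Since $a_j(t,\lambda)$ is independent of $x$, it suffices to identify its value at each fixed $t_0\in(0,T)$; the key feature exploited below is that a localized, high-frequency boundary excitation produces a solution concentrated near $\partial\Omega$, for which the flux $a_j(t,\lambda)\partial_\nu w_j$ on the boundary is dominated, to leading order, by the diffusion coefficient and not by the zeroth/first order terms.

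The core step is the construction of singular solutions in the spirit of Alessandrini (as announced in the introduction). Concretely, for a fixed boundary point $x_0\in\partial\Omega$ and fixed $t_0\in(0,T)$, I would build a family of boundary data $g_\rho\in\mathcal H_0$ — suitably cut off in $t$ near $t_0$ and concentrating in $x$ near $x_0$ at scale $\rho\to 0^+$ — and track the asymptotics as $\rho\to 0$ of the bilinear form
\[
\int_{(0,T)\times\partial\Omega} \left(\Lambda_{j,\lambda}g_\rho\right)\,\overline{h_\rho}\,dt\,dS(x),
\]
for a second family $h_\rho$ of the same type. Using the interior regularity and the fact that such solutions decay away from $x_0$ on the scale $\rho$, the leading-order term of this quantity can be computed explicitly and is a positive multiple of $a_j(t_0,\lambda)$ (times a universal geometric constant coming from the half-space model problem near $x_0$), while the contributions of $\mathcal B_j(t,x,\lambda,0)\cdot\nabla(\cdot)$ are of lower order in $\rho$. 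Hence the equality of the two DN maps forces $a_1(t_0,\lambda)=a_2(t_0,\lambda)$. Since $x_0\in\partial\Omega$ and $t_0\in(0,T)$ were arbitrary and $a_j(\cdot,\lambda)$ does not depend on $x$, this gives \eqref{t1d}.

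I expect the main obstacle to be the rigorous construction and control of these concentrating boundary solutions in the parabolic (rather than elliptic) setting: one must choose the time cutoff compatibly with the constraints $g(0,\cdot)=\partial_t g(0,\cdot)=0$ defining $\mathcal H_0$, verify that the parabolic scaling (anisotropic, with $t\sim x^2$) matches the spatial concentration scale $\rho$, and obtain sharp remainder estimates showing that the convection term and the curvature of $\partial\Omega$ contribute only at subleading order. This is where the parabolic analogue of Alessandrini's singular solutions must be set up carefully; once the leading asymptotics are isolated, the extraction of $a_j(t,\lambda)$ is straightforward. An alternative, if one prefers to avoid delicate boundary layer analysis, is to appeal directly to the result of \cite{EPS2} recovering the nonlinear diffusion from (a possibly larger portion of) the parabolic DN map, after checking that the data $\Lambda_{j,\lambda}$ available here — i.e. the linearization of $\mathcal N_{\lambda,a_j,\mathcal B_j}$ at constant states for all $\lambda\in\R$ — suffices to run their argument.
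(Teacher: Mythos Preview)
Your plan captures the essential idea---use Alessandrini-type singular solutions concentrating near $\partial\Omega$ so that the boundary pairing is dominated by the diffusion coefficient while the convection terms are subleading---and this is indeed what the paper does. The implementation, however, is more concrete than the boundary-layer/half-space asymptotics you outline, and sidesteps the parabolic scaling issue you flag as the main obstacle.

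Rather than building concentrating boundary data via a half-space model, the paper fixes a time cutoff $\chi\in\mathcal C^\infty_c(t_0,t_1)$ and takes as Dirichlet datum the \emph{harmonic} function $\Psi_y(t,x)=\chi(t)\Phi_y(x)$, where $\Phi_y(x)=c_n|x-y|^{2-n}$ is the Newtonian potential with pole $y\notin\overline\Omega$ at distance $r=\mathrm{dist}(y,\Omega)$. Because $\Delta\Phi_y=0$ in $\Omega$, the solutions $w_j$ and the adjoint solution $w_1^*$ split as $\Psi_y$ plus remainders whose $L^2(0,T;H^1(\Omega))$ norms are bounded by $C\|\Phi_y\|_{L^2(\Omega)}\leq Cr^{2-n/2-1/16}$. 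The argument proceeds by contradiction: assuming $a_1<a_2$ on $[t_0,t_1]$, one evaluates $\langle(\Lambda_{1,\lambda}-\Lambda_{2,\lambda})\Psi_y,\Psi_y\rangle_{L^2(\Sigma)}=0$, integrates by parts using the adjoint equation to eliminate the cross terms, and arrives at
\[
\int_{t_0}^{t_1}(a_1-a_2)\int_\Omega\nabla w_2\cdot\nabla w_1^*\,dx\,dt=-\int_{t_0}^{t_1}\int_\Omega[(\mathcal B_2-\mathcal B_1)(\cdot,\lambda,0)\cdot\nabla w_2]\,w_1^*\,dx\,dt.
\]
The left side is bounded below by $c\,r^{2-n}$ (from $\int_\Omega|\nabla\Phi_y|^2\,dx\gtrsim r^{2-n}$), while the right side is $O(r^{3-n-1/8})$; letting $r\to0$ gives the contradiction. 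For $n=2$ one uses the logarithmic potential instead.

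The advantage of the paper's route is that the singular part is purely elliptic and explicitly harmonic, so no parabolic boundary-layer expansion is needed: the time variable enters only through the smooth cutoff $\chi$, which automatically handles the compatibility constraints in $\mathcal K_0$. Your approach should also succeed in principle, but making the half-space asymptotics rigorous in the parabolic setting is genuinely more work. The fallback of invoking \cite{EPS2} would require verifying that their argument tolerates the unknown first-order terms $\mathcal B_j$ and runs with the linearized data $\Lambda_{j,\lambda}$ only, which is precisely what the paper's self-contained argument accomplishes.
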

\begin{proof} We prove this result by applying an approach based on singular solutions inspired by  \cite{Alessandrini} in our specific context with solutions of parabolic equations and any dimension of space $n\geq2$. We start by proving this result for $n\geq3$. 
	We will prove  that \eqref{t3d} implies that
	\bel{t3e}a_1(t,\lambda)=a_2(t,\lambda),\quad t\in(0,T).\ee
	For this purpose, we will proceed by contradiction. Let us assume that \eqref{t3d} is fulfilled but \eqref{t3e} is not fulfilled. Then, without loss of generality, we may assume that there exists $0<t_0<t_1<T$, such that 
	\bel{t3f}a_1(t,\lambda)<a_2(t,\lambda),\quad t\in[t_0,t_1].\ee
	Fix $r>0$ and $y\in\R^n\setminus\overline{\Omega}$ such that dist$(y,\Omega)=r$. Consider also $\Phi_y\in\mathcal C^\infty(\overline{\Omega})$ defined by 
	$$\Phi_y(x)=\frac{1}{n(2-n)d_n}|x-y|^{2-n},\quad x\in\overline{\Omega},$$
	where $d_n$ denotes the volume of the unit ball in $\R^n$.
	Fix also $\delta \in(0,(t_1-t_0)/4)$, $\chi\in\mathcal C^\infty_0(t_0,t_1)$, satisfying $0\leq\chi\leq 1$ and $\chi=1$ on $[t_0+\delta,t_1-\delta]$. Then, we set
	$$\Psi_y(t,x)=\chi(t)\Phi_y(x),\quad x\in\overline{\Omega},\ t\in[0,T].$$
	For $j=1,2$, let $w_j$ be the solution of
	$$\left\{
	\begin{array}{ll}
	\partial_tw_j-a_j(t,\lambda) \Delta w_j+\mathcal B_j(t,x,\lambda,0)\cdot\nabla w_j=0  & \mbox{in}\ (0,T)\times\Omega ,
	\\
	w_j= \Psi_y &\mbox{on}\ (0,T)\times\partial\Omega\\
	w_j(0,x)=0 &\ x\in\Omega
	\end{array}
	\right.$$
	and $w_j^*$, $j=1,2$, the solution of the adjoint system
	$$\left\{
	\begin{array}{ll}
	-\partial_tw_j^*-a_j(t,\lambda) \Delta w_j^*-\textrm{div}\left(\mathcal B_j(x,\lambda,0) w_j^*\right)=0  & \mbox{in}\ (0,T)\times\Omega ,
	\\
	w_j^*= \Psi_y &\mbox{on}\ (0,T)\times\partial\Omega\\
	w_j^*(T,x)=0 &\ x\in\Omega.
	\end{array}
	\right.$$
	Recall that $\Delta \Phi_y=0$ on $\Omega$. Using this property, we can split $w_j$ into $w_j=\Psi_y+z_j$ with $z_j$ solving
	$$\left\{
	\begin{array}{ll}
	\partial_tz_j-a_j(t,\lambda) \Delta z_j+\mathcal B_j(t,x,\lambda,0)\cdot\nabla z_j=G_j  & \mbox{in}\ (0,T)\times\Omega ,
	\\
	z_j= 0 &\mbox{on}\ (0,T)\times\partial\Omega\\
	z_j(0,x)=0 &\ x\in\Omega,
	\end{array}
	\right.$$
	with 
	$$G_j(t,x)=-\chi'(t)\Phi_y-\chi(t) \mathcal B_j(t,x,\lambda,0)\cdot\nabla \Phi_y(x).$$
	Applying \cite[Theorem 4.1., Chapter 3]{LM1} and \cite[Theorem 5.3., Chapter 4]{LM2}, we deduce that this problem admits a unique solution $z_j\in L^2(0,T;H^2(\Omega))\cap H^1(0,T;L^2(\Omega))$ satisfying the estimate
	\bel{fifi1}\begin{aligned}\norm{z_j}_{L^2(0,T;H^1(\Omega))}&\leq C\norm{G_j}_{L^2(0,T;H^{-1}(\Omega))}\\
		\ &\leq C\norm{\Phi_y}_{L^2(\Omega)}+ C\norm{\mathcal B_j(t,x,\lambda,0)\cdot\nabla \Phi_y}_{L^2(0,T;H^{-1}(\Omega))}\\
		\ &\leq C(1+\norm{\mathcal B_j(\cdot,\lambda,0)}_{L^\infty(0,T;W^{1,\infty}(\Omega))})\norm{\Phi_y}_{L^2(\Omega)}\\
		\ &\leq C\norm{\Phi_y}_{L^2(\Omega)}\end{aligned} \ee
	where $C$ is independent of $r$. On the other hand, fixing $R>0$ such that $\Omega$ is contained into $\mathbb B(y,R)=\{x\in\R^n:\ |x-y|<R\}$ and using the fact that dist$(y,\Omega)=r$, we get
	\bel{fifi}\norm{\Phi_y}_{L^2(\Omega)}^2=C\int_\Omega |x-y|^{4-2n}dx\leq Cr^{4-n-\frac{1}{8}}\int_{\mathbb B(y,R)} |x-y|^{\frac{1}{8}-n}dx=Cr^{4-n-\frac{1}{8}}\ee
	with $C>0$ independent of $r$. Combining this with \eqref{fifi1}, we find
	\bel{l100c} \norm{z_j}_{L^2(0,T;H^1(\Omega))}\leq Cr^{2-\frac{n}{2}-\frac{1}{16}},\ee
	with $C$ independent of $r$. In the same way, we can split $w_1^*$ into $w_1^*=\Psi_y+z_1^*$ with 
	\bel{l100d} \norm{z_1^*}_{L^2(0,T;H^1(\Omega))}\leq Cr^{2-\frac{n}{2}-\frac{1}{16}},\ee
	where $C$ is independent of $r$. We fix $w=w_1-w_2$ and we remark that $w$ solves
	$$\left\{
	\begin{array}{ll}
	\partial_tw-a_1(t,\lambda) \Delta w+\mathcal B_1(t,x,\lambda,0)\cdot\nabla w=K  & \mbox{in}\ (0,T)\times\Omega ,
	\\
	w= 0 &\mbox{on}\ (0,T)\times\partial\Omega\\
	w(0,x)=0 &\ x\in\Omega,
	\end{array}
	\right.$$
	with
	$$K(t,x)=(a_1(t,\lambda)-a_2(t,\lambda))\Delta w_2+\left[\mathcal B_2(t,x,\lambda,0)-\mathcal B_1(t,x,\lambda,0)\right]\cdot\nabla w_2.$$
	Applying condition $\Lambda_{1,\lambda}=\Lambda_{2,\lambda}$ and integrating by parts, we obtain
	\begin{multline}
	0=\left\langle \Lambda_{1,\lambda}\Psi_y-\Lambda_{2,\lambda}\Psi_y,\Psi_y\right\rangle_{L^2((0,T)\times\partial\Omega)}\\
	=\int_0^T\int_\Omega [a_1(t,\lambda)\Delta w_1w_1^*+a_1(t,\lambda)\nabla w_1\cdot\nabla w_1^*-a_2(t,\lambda)\Delta w_2w_1^*-a_2(t,\lambda)\nabla w_2\cdot\nabla w_1^*]dxdt\\
	=\int_0^T\int_\Omega [a_1(t,\lambda)\Delta ww_1^*+a_1(t,\lambda)\nabla w\cdot\nabla w_1^*+(a_1(t,\lambda)-a_2(t,\lambda))(\Delta w_2w_1^*+\nabla w_2\cdot\nabla w_1^*)]dxdt.
	\end{multline}
	Using the fact that supp$(\chi)\subset(t_0,t_1)$ and applying the uniqueness of solutions of parabolic IBVP, we deduce that, for $j=1,2$, $w_j=0$ on $(0,t_0)\times\Omega$ and $w_j^*=0$ on $(t_1,T)\times\Omega$. Thus, we obtain
	\[
	\int_{t_0}^{t_1}\int_\Omega [a_1(t,\lambda)\Delta ww_1^*+a_1(t,\lambda)\nabla w\cdot\nabla w_1^*+(a_1(t,\lambda)-a_2(t,\lambda))(\Delta w_2w_1^*+\nabla w_2\cdot\nabla w_1^*)]dxdt=0.
	\]
	On the other hand, we have
	\begin{multline*}a_1(t,\lambda)\Delta w=\partial_tw+\mathcal B_1(t,x,\lambda,0)\cdot\nabla w-(a_1(t,\lambda)-a_2(t,\lambda))\Delta w_2\\
	-[\mathcal B_2(t,x,\lambda,0)-\mathcal B_1(t,x,\lambda,0)]\cdot\nabla w_2
	\end{multline*}
	and it follows
	\begin{multline*}\int_{t_0}^{t_1}\int_\Omega \partial_t w\,w_1^*+[([\mathcal B_2(t,x,\lambda,0)-\mathcal B_1(t,x,\lambda,0)]\cdot\nabla w_2) w_1^*dxdt\\
	+\int_{t_0}^{t_1}\int_\Omega(\mathcal B_1(t,x,\lambda,0)\cdot\nabla w) w_1^*+a_1(t,\lambda)\nabla w\cdot\nabla w_1^*)\,dx\,dt\\
	+\int_{t_0}^{t_1}\int_\Omega (a_1(t,\lambda)-a_2(t,\lambda))\nabla w_2\cdot\nabla w_1^*]dxdt=0.\end{multline*}
	Finally, using the fact that $w_{|(0,T)\times\partial\Omega}\equiv0$, $w_{|(0,t_0)\times\Omega}\equiv0$, $w_1^*=0$ on $(t_1,T)\times\Omega$ and integrating by parts, we get
	$$\begin{aligned}&\int_{t_0}^{t_1}\int_\Omega \partial_t w\,w_1^*+\left(\mathcal B_1(t,x,\lambda,0)\cdot\nabla w\right) w_1^*+a_1(t,\lambda)\nabla w\cdot\nabla w_1^*dxdt\\
	&=\int_0^T\int_\Omega [-\partial_tw_1^*-a_1(t,\lambda) \Delta w_1^*-\textrm{div}\left(\mathcal B_1(t,x,\lambda,0) w_1^*\right)]wdxdt=0.\end{aligned}$$
	Therefore, we obtain
	$$\int_{t_0}^{t_1}(a_1(t,\lambda)-a_2(t,\lambda))\int_\Omega \nabla w_2\cdot\nabla w_1^*dx+\int_{t_0}^{t_1}\int_\Omega [\mathcal B(t,x)\cdot\nabla w_2]w_1^*dxdt=0,$$
	with 
	$$\mathcal B(t,x,\lambda)=\mathcal B_2(t,x,\lambda,0)-\mathcal B_1(t,x,\lambda,0).$$
	From this last identity, we deduce that
	\bel{l100e} \int_{t_0}^{t_1}(a_1(t,\lambda)-a_2(t,\lambda))\int_\Omega \nabla w_2\cdot\nabla w_1^*dx=-\int_{t_0}^{t_1}\int_\Omega [\mathcal B(t,x,\lambda)\cdot\nabla w_2]w_1^*dxdt.\ee
	Recall that
	$$\nabla \Phi_y(x)=\frac{1}{nd_n}\frac{x-y}{|x-y|^n},\quad x\in\Omega.$$
	Choosing $r>0$ sufficiently small, we can find $x_0\in\Omega$ such that $|x_0-y|=3r$ and $$\mathbb B(x_0,r):=\{x\in\R^n:\ |x-x_0|<r\}\subset \Omega.$$ 
	Therefore, we get
	$$\begin{aligned}\int_\Omega |\nabla \Phi_y(x)|^2dx&\geq \frac{1}{nd_n}\int_{\mathbb B(x_0,r)} |x-y|^{2-2n}dx\\
	\ &\geq \frac{1}{nd_n}\int_{\mathbb B(x_0,r)} (|x_0-y|-|x-x_0|)^{2-2n}dx\\
	\ &\geq \frac{1}{nd_n}\int_{\mathbb B(x_0,r)} (2r)^{2-2n}dx=\frac{2^{2-2n}}{n}r^nr^{2-2n}=\frac{2^{2-2n}r^{2-n}}{n}.\end{aligned}$$
	In the same way, we find
	$$\norm{\nabla \Phi_y}_{L^2(\Omega)}^2=C\int_\Omega |x-y|^{2-2n}dx\leq Cr^{2-n-\frac{1}{8}}\int_{\mathbb B(y,R)} |x-y|^{\frac{1}{8}-n}dx=Cr^{2-n-\frac{1}{8}},$$
	with $C>0$ independent of $r$.
	Combining these two estimates, we get
	\bel{l100r}  cr^{2-n}\leq\int_\Omega |\nabla \Phi_y(x)|^2dx\leq Cr^{2-n-\frac{1}{8}},\ee
	with $C,c>0$ independent of $r$. Moreover, we have
	\begin{multline*}\int_{t_0}^{t_1}\int_\Omega (a_2(t,\lambda)-a_1(t,\lambda))|\nabla \Psi_y(t,x)|^2dxdt\\
	=\left(\int_{t_0}^{t_1}(a_2(t,\lambda)-a_1(t,\lambda))|\chi(t)|^2dt\right)\left(\int_\Omega |\nabla \Phi_y(x)|^2dx\right)
	\end{multline*}
	and, since $\chi\not\equiv0$, we deduce that
	$$  c\inf_{t\in[t_0,t_1]}[a_2(t,\lambda)-a_1(t,\lambda)]r^{2-n}\leq\int_{t_0}^{t_1}\int_\Omega (a_1(t,\lambda)-a_2(t,\lambda))|\nabla \Psi_y(t,x)|^2dxdt,$$
	with $c>0$ independent of $r$. Combining this with \eqref{l100c}-\eqref{l100d} and the fact that $w_j=\Psi_y+z_j$, $j=1,2$, and $w_1^*=\Psi_y+z_1^*$, we obtain that for $r>0$ sufficiently small, we have
	$$c\inf_{t\in[t_0,t_1]}[a_2(t,\lambda)-a_1(t,\lambda)]r^{2-n}\leq\int_{t_0}^{t_1}(a_2(t,\lambda)-a_1(t,\lambda))\int_\Omega \nabla w_2\cdot\nabla w_1^*dx.$$
	In the same way, applying \eqref{fifi}, \eqref{l100c}-\eqref{l100d} and \eqref{l100r}, we obtain 
	$$\abs{\int_{t_0}^{t_1}\int_\Omega [\mathcal B(t,x,\lambda)\cdot\nabla w_2]w_1^*dxdt}\leq Cr^{3-n-\frac{1}{8}},$$
	with $C>0$ independent of $r$.
	From these two last estimates and the identity \eqref{l100e}, we deduce that
	$$c\inf_{t\in[t_0,t_1]}[a_2(t,\lambda)-a_1(t,\lambda)]r^{2-n}\leq Cr^{3-n-\frac{1}{8}},\quad r\in(0,1),$$
	with $c,C>0$ independent of $r$. This last identity clearly contradicts \eqref{t3f}. Therefore, \eqref{t3d} implies \eqref{t3e}. For $n=2$, we can prove the same result by choosing 
	$$\Phi_y(x)=\frac{1}{2\pi}\ln(|x-y|),\quad x\in\overline{\Omega}.$$
\end{proof}

\subsection{Recovery of the nonlinear convection term at $\xi=0$}
In all this section we fix $\lambda\in\R$. Our goal is to show that under the hypothesis of Theorem~\ref{t1} there holds,
\bel{t1f} \mathcal B_1(t,x,\lambda,0)=\mathcal B_2(t,x,\lambda,0),\quad (t,x)\in(0,T)\times\Omega.\ee
For this purpose, following the analysis of the preceding section and applying Proposition \ref{t3}, we can reduce this problem to an inverse problem for the IBVP
$$\left\{
\begin{array}{ll}
\partial_tw_j-a(t,\lambda) \Delta w_j+\mathcal B_j(t,x,\lambda,0)\cdot\nabla w_j=0  & \mbox{in}\ (0,T)\times\Omega ,
\\
w_j= h &\mbox{on}\ (0,T)\times\partial\Omega\\
w_j(0,x)=0 &\ x\in\Omega,
\end{array}
\right.$$
where the term $a(t,\lambda)$ is defined by
$$a(t,\lambda)=a_1(t,\lambda)=a_2(t,\lambda).$$
We associate with this problem the boundary map
$$\Lambda_{j,\lambda}:\mathcal H_0\ni h\mapsto \partial_\nu w_j|_{(0,T)\times\partial\Omega}$$
and, applying \eqref{t1c}, we obtain \eqref{t3d} and we want to prove that \eqref{t1f} holds true. This result can be deduced from  an extension of the analysis of \cite{CK1}. Namely, due to the presence of the time dependent second order coefficient $a(t,\lambda)$, we need to consider new class of geometric optics (GO in short) solutions. Following, the argumentation of \cite{CK1}, defining
$$B_j(t,x,\lambda)=\mathcal B_j(t,x,\lambda,0),\quad j=1,2,$$
we consider some class of solutions $w_j\in H^1(0,T;H^{-1}(\Omega))\cap L^2(0,T;H^{1}(\Omega))$ of the problems
\bel{eqGO1}\left\{
\begin{array}{ll}
	\partial_tw_1-a(t,\lambda) \Delta w_1 +B_1(t,x,\lambda)\cdot\nabla w_1=0  & \mbox{in}\ (0,T)\times\Omega ,
	\\
	w_1(0,x)=0 &\ x\in\Omega,
\end{array}
\right.\ee
\bel{eqGO2}\left\{
\begin{array}{ll}
	-\partial_tw_2-a(t,\lambda) \Delta w_2 -\textrm{div}(B_2(t,x,\lambda) w_2)=0  & \mbox{in}\ (0,T)\times\Omega ,
	\\
	w_2(T,x)=0 &\ x\in\Omega.
\end{array}
\right.\ee
Following \cite{CK1}, we fix $\rho>1$,  $\omega\in\mathbb S^{n-1}$, $\tau\in\R$, $\xi\in\omega^\bot$, $\delta\in(0,1)$ and we consider solutions of the form 
\[w_1=e^{\rho^2t+\frac{\rho}{\sqrt{a(t,\lambda)}}x\cdot\omega}\left[\left(1-e^{-\delta t}\right)b_{1,\rho}(t,x)\exp\left(-\frac{a'(t,\lambda)(x\cdot\omega)^2}{8a(t,\lambda)^2}\right)e^{-it\tau-ix\cdot\xi}+z_{1,\rho}(t,x)\right],\]
\[w_2=e^{-\rho^2t-\frac{\rho}{\sqrt{a(t,\lambda)}}x\cdot\omega}\left[\left(1-e^{-\delta (T-t)}\right)b_{2,\rho}(t,x)\exp\left(\frac{a'(t,\lambda)(x\cdot\omega)^2}{8a(t,\lambda)^2}\right)+z_{1,\rho}(t,x)\right]\]
of the problems \eqref{eqGO1}-\eqref{eqGO2}. 
Here, following \cite{CK1}, we define the functions $b_{j,\rho}$, $j=1,2$, in such a way that they satisfy
$$-2\sqrt{a}\,\omega\cdot \nabla b_{1,\rho}+(B_{1,\rho}\cdot \omega)b_{1,\rho}=0,\quad 2\sqrt{a}\,\omega\cdot \nabla b_{2,\rho}+(B_{2,\rho}\cdot \omega)b_{2,\rho}=0,$$
with $B_{j,\rho}$, some suitable smooth approximation of the function $B_j$. Finally, we choose the functions $$z_{j,\rho}\in H^1(0,T;H^{-1}(\Omega))\cap L^2(0,T;H^{1}(\Omega))$$ satisfying the following conditions
\bel{GO5}z_{1,\rho}(0,x)=0,\quad z_{2,\rho}(T,x)=0,\quad x\in\Omega\ee
as well as the decay estimate
$$\lim_{\rho\to+\infty}(\rho^{-1}\norm{z_{j,\rho}}_{L^2(0,T;H^{1}(\Omega))}+\norm{z_{j,\rho}}_{L^2((0,T)\times\Omega))})=0.$$
The construction of the GO solutions satisfying the above properties can be deduced by combining the arguments used in the proof of \cite{CK1} with a Carleman estimate similar to \cite[Proposition 3.1]{CK1} (see Proposition \ref{pp1}). Then, following \cite[Corollary 1.1]{CK1}, we deduce that \eqref{t1f} holds true.

\subsection{Recovery of the Taylor series of the nonlinear convection term at $\xi=0$}
Thus far we have shown that under the hypotheses of Theorem~\ref{t1} there holds:
\bel{diff_equal}
a_1=a_2=a \quad \text{on $(0,T)\times \R$}
\ee
and
\bel{conv_equal}
\mathcal B_1=\mathcal B_2=B\quad \text{on $(0,T)\times \Omega\times\R\times \{0\}.$}
\ee
Therefore, to conclude the proof of Theorem~\ref{t1} it suffices to show that 
\bel{b_taylor_eq}
\partial_\xi^\beta \mathcal B_1(t,x,\lambda,0)= \partial_\xi^\beta \mathcal B_2(t,x,\lambda,0),\quad (t,x,\lambda)\in (0,T)\times \Omega\times \R,\quad \beta \in (\N \cup\{0\})^n. 
\ee

Throughout the remainder of this section we will fix $\lambda \in \R$ and use an induction argument on the size of the multi-index
$$|\beta|=\beta_1+\ldots+\beta_n,$$ 
to show that under the hypotheses of Theorem~\ref{t1}, equation \eqref{b_taylor_eq} is satisfied. Observe that for $|\beta|=0$, there is nothing to prove as both sides of \eqref{b_taylor_eq} are equal, thanks to \eqref{conv_equal}. Next, let $m\in \N$ and let us assume for the hypothesis of our induction that \eqref{b_taylor_eq} is satisfied for all $|\beta|=0,\ldots,m-1$. We would like to prove that \eqref{b_taylor_eq} also holds for all multi-indices $\beta$ with $|\beta|=m$.

To this end, let us begin by noting that
\bel{B_j_form}
\mathcal B_j(t,x,\tau,\xi)=b_j(t,x,\tau,\xi)\,B(t,x,\tau),\quad \text{for $j=1,2$},
\ee
for some $B\in \mathcal C^{\infty}([0,T]\times \overline\Omega\times \R)^n$, and some $b_j\in \mathcal C^\infty([0,T]\times\overline{\Omega}\times\R\times\R^n) $ that satisfies \eqref{form1} with $b=b_j$. For $k=1,2,\ldots,m+1$, let $v_{k}\in H^1(0,T;\mathcal C^2(\overline\Omega))),$ be solutions to the equation
$$ \partial_t v_k -a(t,\lambda)\Delta v_k +B(t,x,\lambda)\cdot\nabla v_k=0,\quad \text{on $(0,T)\times \Omega$},$$
that additionally satisfy $v_k=0$ on $\{0\}\times\Omega$. Let $v_{m+2} \in H^1(0,T;\mathcal C^2(\overline\Omega)))$ be a solution to the adjoint equation
 $$ \partial_t v_{m+2} +a(t,\lambda)\Delta v_{m+2} +\textrm{div}\,(B(t,x,\lambda)\,v_{m+2})=0,\quad \text{on $(0,T)\times \Omega$},$$
 that additionally satisfies $v_{m+2}=0$ on $\{T\}\times\Omega$. Finally, for $k=1,\dots,m+2$, we define
 $$ g_k=v_k|_{(0,T)\times\partial\Omega}.$$
Next, let $s=(s_1,\dots,s_{m+1})$ be in a small neighborhood of the origin in $\R^{m+1}$ and for $j=1,2$, define $u_{j,s}$ to be the unique small solution to the equation

\[
\left\{
\begin{array}{ll}
	\partial_tu_{j,s}-\textrm{div}(a(t,u_{j,s}) \nabla u_{j,s})+\mathcal B_j(t,x,u_{j,s},\nabla u_{j,s})\cdot\nabla u_{j,s}=0  & \mbox{in}\ (0,T)\times\Omega:=M ,
	\\
	u_{j,s}=\lambda+s_1\,g_1+\ldots+s_{m+1}\,g_{m+1} &\mbox{on}\ (0,T)\times\partial\Omega:=\Sigma,\\
	u_{j,s}(0,x)=\lambda & \text{on $\Omega$}.
\end{array}
\right.
\]

Let 
$$ w^{(m+1)}_j= \frac{\partial^{(m+1)}\,u_{j,s}}{\partial s_1\ldots\partial s_{m+1}}|_{s=0}, \quad \text{for $j=1,2$}.$$
In view of Lemma~\ref{l5}, $w^{(m+1)}_j$ solves the boundary value problem \eqref{eq6} with $H^{(m+1)}$ replaced by $H_j^{(m+1)}$ where $H_j^{(m+1)}$ is given analogously to \eqref{Hm} with $b$ replaced by $b_j$ and $K^{(m+1)}$ replaced by $K_j^{(m+1)}$. Note also that in view of our induction assumption for $|\beta|\leq m-1$ and Lemma~\ref{l5} there holds:
\bel{K_iden} 
K^{(m+1)}_1=K^{(m+1)}_2.
\ee
Applying the condition \eqref{t1c} together with Lemma~\ref{l5} and our induction assumption for all $|\beta|\leq m-1$, it also follows that 
\bel{w_normal}
\partial_\nu w^{(m+1)}_1= \partial_\nu w^{(m+1)}_2 \quad \text{on $\Sigma$}.
\ee 
Next, recalling \eqref{K_iden} it follows that the function
$$w=w^{(m+1)}_1-w^{(m+2)}_2$$ 
satisfies
\begin{multline*}
	0=\partial_t w-a(t,\lambda)\Delta w+B(t,x,\lambda)\cdot\nabla w\\
	+\sum_{\ell\in\pi(m+1)}\sum_{j_1,\ldots,j_{m}=1}^n\left((\partial_{\xi_{j_1}}\ldots\partial_{\xi_{j_{m}}}(b_1-b_2)|_{\xi=0})\partial_{j_1}v_{\ell_1}\ldots\partial_{j_{m}} v_{\ell_{m}}\right)B\cdot\nabla v_{\ell_{m+1}}
\end{multline*}
Multiplying the latter equation with $v_{m+2}$ and integrating by parts on $(0,T)\times \Omega$ together with the fact that 
$$ w|_{t=0}=0,\quad \text{and}\quad w|_{\Sigma}=\partial_\nu w|_{\Sigma}=0,$$
it follows that 
\[\sum_{\ell_1,\ldots,\ell_{m+1}\in \pi(m+1)}\int_{(0,T)\times \Omega}\left(\sum_{j_1,\ldots,j_m=1}^nQ^{j_1,\ldots,j_m}\partial_{j_1}v_{\ell_1}\ldots \partial_{j_m}v_{\ell_m}\right) (B\cdot \nabla v_{\ell_{m+1}})\,v_{m+2}\,dt\,dx=0, 
\]
where the symmetric tensor $Q$ with elements $Q^{j_1,\ldots,j_m}$ is given by 
$$ Q^{j_1,\ldots,j_m}= \partial_{\xi_{j_1}}\ldots\partial_{\xi_{j_{m}}}(b_1-b_2)|_{\xi=0},\quad \text{on $(0,T)\times \Omega\times \R$,}$$
for all $j_1,\ldots,j_m=1,\ldots,n$. Finally, applying Proposition~\ref{prop1} with $B_0(\cdot)=B(\cdot,\lambda)$ and $a_0(\cdot)=a(\cdot,\lambda)$, we conclude that
\bel{}
(\partial_{\xi_{j_1}}\ldots\partial_{\xi_{j_{m}}}(b_1-b_2)|_{\xi=0})\, B =0 \quad \text{on $(0,T)\times\Omega\times \R$,}
\ee
for all $j_1,\ldots,j_m=1,\ldots,n$. Recalling \eqref{B_j_form}, this yields the desired claim \eqref{b_taylor_eq} for $|\beta|=m$. This concludes the induction and completes the proof of Theorem~\ref{t1}.

\section{Geometric Optic solutions with higher regularity}
\label{sec_smoothgo}
\subsection{Principal part of smoother GO}

Our proof of Proposition~\ref{prop1} will partly rely on the construction of Geometric Optics solutions with higher regularity. More precisely, we will consider GO solutions to the equation
\bel{eqGO1sm}\left\{
\begin{array}{ll}
	\partial_tw_1-a_0(t) \Delta w_1 +B_0(t,x)\cdot\nabla w_1=0  & \mbox{in}\ (0,T)\times\Omega ,
	\\
	w_1(0,x)=0 &\ x\in\Omega,
\end{array}
\right.\ee
as well as GO solutions for 
\bel{eqGO2sm}\left\{
\begin{array}{ll}
	-\partial_tw_2-a_0(t) \Delta w_2 -\textrm{div}(B_0(t,x) w_2)=0  & \mbox{in}\ (0,T)\times\Omega ,
	\\
	w_2(T,x)=0 &\ x\in\Omega,
\end{array}
\right.\ee
that lie in the energy space  $H^1(0,T;\mathcal C^2(\overline{\Omega}))$.
We present in this section a canonical construction of these GO solutions that will depend on a large asymptotic parameter $\rho$ with $|\rho|\gg1$ and formally concentrates on a ray in $\Omega$ that passes through a point $x_0\in\Omega$ in a fixed direction $\omega\in \mathbb S^{n-1}$. 

We fix $N_1=[\frac{n}{2}]+5$ and consider solutions of the form
\bel{GO7}w_1(t,x)=U_{+,\rho}(t,x)=e^{\rho^2t+\frac{\rho}{\sqrt{a_0(t)}}x\cdot\omega}\left[\underbrace{\sum_{\ell=0}^{N_1}c_{+,\ell}(t,x)\,\rho^{-\ell}}_{V_{+,\rho}}+R_{+,\rho}(t,x)\right],\ee
and
\bel{GO8}w_2(t,x)=U_{-,\rho}(t,x)=e^{-\rho^2t-\frac{\rho}{\sqrt{a_0(t)}}x\cdot\omega}\left[\underbrace{\sum_{\ell=0}^{N_1}c_{-,\ell}(t,x)\,\rho^{-\ell}}_{V_{-,\rho}}+R_{-,\rho}(t,x)\right],\ee
to equations \eqref{eqGO1sm} and \eqref{eqGO2sm} respectively. The principal terms $V_{\pm,\rho}$ will be constructed canonically and will be localized near the ray passing through $x_0$ in the direction $\omega$ while the correction terms $R_{\pm,\rho}$ will converge to zero as $|\rho|$ approaches infinity.
  
Let  $\tilde{B}_0$ be a smooth  compactly supported extension of the function $B_0$ to $\R\times\R^n$. We define $L_\pm$, $P_{\rho,\pm}$ the differential operators given by
\bel{l}L_{+}v=\partial_tv-a_0(t)\Delta v +\tilde{B}_0\cdot\nabla v,\quad L_{-}v=-\partial_tv-a_0(t)\Delta v -\textrm{div}(\tilde{B}_0v),\ee
and
\bel{P_pm}
P_{\rho,\pm}v= e^{\mp(\rho^2t+\frac{\rho}{\sqrt{a_0(t)}}x\cdot\omega)}L_{\pm}(e^{\pm(\rho^2t+\frac{\rho}{\sqrt{a_0(t)}}x\cdot\omega)}v) 
\ee
The GO solutions will be constructed by applying the WKB method to the conjugated operator $P_{\rho,\pm}$. It is straightforward to see that
$$P_{\rho,\pm}v=\rho\, J_{\pm}v + L_{\pm}v,$$
where  
\bel{j1}J_+v:=-2\sqrt{a_0(t)}\omega\cdot \nabla v+\left[\frac{\tilde{B}_0(t,x)}{\sqrt{a_0(t)}}\cdot \omega-\frac{a'_0(t)}{2a_0(t)^{\frac{3}{2}}}x\cdot\omega\right]v,\ee
\bel{j2}J_-v:=2\sqrt{a_0(t)}\omega\cdot \nabla v+\left[\frac{\tilde{B}_0(t,x)}{\sqrt{a_0(t)}}\cdot \omega-\frac{a'_0(t)}{2a_0(t)^{\frac{3}{2}}}x\cdot\omega\right]v.\ee
We choose $c_{\pm,\ell}$, $\ell=1,\ldots N_1$, in such a way that 
\bel{GO9}J_+c_{+,0}=0,\quad J_-c_{-,0}=0\ee
and, for $\ell=1,\ldots N_1$,
\bel{GO10}J_+c_{+,\ell}=-L_+c_{+,\ell-1},\quad J_-c_{-,\ell}=-L_-c_{-,\ell-1}.\ee
Let $\zeta\in\mathcal C^\infty_c((0,T))$. Define the functions 
\bel{GO13} e_{\pm}(t,x)=\exp\left(\frac{\mp a_0'(t)(x\cdot\omega)^2}{8a_0^2(t)}\right)\exp\left(\frac{\mp\int_0^{+\infty} \tilde{B}_0(x+s\omega,t)\cdot\omega \,ds}{2\sqrt{a_0(t)}}\right),\ee
Then, for any smooth function $d$ solving the transport equation
\bel{GO14}\omega\cdot \nabla d(t,x)=0,\quad (t,x)\in \R^{1+n}\ee
we can define
\bel{GO15}
c_{\pm,0}(t,x)=\zeta(t)\,e_\pm(t,x)\,d(t,x),\quad \forall \, (t,x)\in (0,T)\times \Omega. \ee
In fact as we would like our GO solutions to concentrate near a fixed ray passing through a point $x_0\in \Omega$ in the direction of $\omega$, we will make a canonical choice for the function $d$ as follows. Let $\delta \in (0,1)$ and let $\alpha_1,\ldots,\alpha_{n-1}$ be unit vectors such that the set 
$$\{\omega,\alpha_1,\ldots,\alpha_{n-1}\}$$
forms an orthonormal basis in $\R^n$. We set
\bel{d_canon} 
d(t,x)=\prod_{j=1}^{n-1} \chi_0\left(\frac{(x-x_0)\cdot\alpha_j}{\delta}\right),
\ee
where $\chi_0:\R\to [0,1]$ is a smooth function with $\chi_0(t)=1$ for $|t|\leq \frac{1}{2}$ and $\chi(t)=0$ for $|t|\geq 1$. It is clear that $d$ solves \eqref{GO14}. 

Using the fact that $c_{\pm,0}\in \mathcal C^\infty(\R\times\R^n)$, $a_0\in\mathcal C^\infty([0,T])$ and $\tilde{B}_0\in\mathcal C^\infty(\R\times\R^n)^n$, we can choose
the solution $c_{\pm,\ell}$, $\ell=1,\ldots N_1$ to the equations \eqref{GO9}-\eqref{GO10} to be lying in    $\mathcal C^\infty(\R\times\R^n)$. Also, using the fact that the functions $c_{\pm,0}(t,x)$ are supported away from $t=0$ and $t=T$ and the fact that the transport equations in \eqref{GO10} are independent of the time variable $t$ (in terms of the derivatives), we can prove, by using cut-off functions in time, that the solutions of these equations can be chosen in such a way that
\bel{GO16}c_{+,\ell}(0,x)=c_{-,\ell}(T,x)=0,\quad x\in\Omega.\ee
In order to complete our construction of Geometric Optic solutions, we need to show that it is possible to construct the remainder terms 
$$R_{\pm,\rho}\in H^1(0,T;\mathcal C^2(\overline{\Omega}))$$ 
satisfying the decay property
\bel{GO17}\norm{R_{\pm,\rho}}_{L^\infty(0,T;\mathcal C^2(\overline{\Omega}))}\leq C\,|\rho|^{-1}\ee
with $C>0$ independent of $\rho$ as well as the final and initial condition
\bel{GO18}R_{+,\rho}(0,x)=R_{-,\rho}(T,x)=0,\quad x\in\Omega.\ee
\subsection{Remainder terms}
In this subsection, we will complete the construction of GO solving \eqref{eqGO1sm}-\eqref{eqGO2sm} of the form \eqref{GO7}-\eqref{GO8}  lying in $H^1(0,T;\mathcal C^2(\overline{\Omega}))$ with remainder terms $R_{\pm,\rho}$ satisfying the decay properties \eqref{GO17}-\eqref{GO18}. For this purpose, following \cite{CK1,CK} we will use Carleman estimates in negative order Sobolev space. Let us consider two parameters $s$,$\rho$ with $|\rho|>s>1$, and define the perturbed weight
\bel{phi}\phi_{\pm,s}(t,x):=\pm \left(\rho^2t+\frac{\rho}{\sqrt{a_0(t)}}\omega\cdot x\right)-\frac{s}{\sqrt{a_0(t)}}{((x+x_1)\cdot\omega)^2\over 2}.\ee
We set
$$P_{\rho,\pm,s}:=e^{-\phi_{\pm,s}}L_{\pm}e^{\phi_{\pm,s}}.$$
Here $x_1\in\R^n$ is chosen in such a way that 
\bel{x0}x_1\cdot\omega=2+\sup_{x\in\Omega}|x|.\ee
Following \cite[Proposition 3.1]{CK1}, and recalling the notation $M=(0,T)\times\Omega$ and $\Sigma=(0,T)\times\partial\Omega$, we can prove the following Carleman estimate.
\begin{prop}\label{pp1} There exist $s_1>1$ and, for $s>s_1$,  $\rho_1(s)$ such that for any $v\in\mathcal C^2(\overline{M})$ satisfying the condition  
	\begin{equation}\label{t2a}v_{\vert \Sigma}=0,\quad v_{\vert t=0}=0,\end{equation}
	the estimate
	\bel{lll1a} \begin{aligned}&|\rho|\int_{\Sigma_{+,\omega}} |\partial_\nu v|^2|\omega\cdot\nu| d\sigma(x)dt+s|\rho| \int_\Omega|v|^2(T,x)dx+s^{-1}\int_M|\Delta v|^2dxdt+s\rho^2\int_M|v|^2dxdt\\
		&\leq C\left[\norm{P_{\rho,+,s}v}^2_{L^2(M)}+|\rho|\int_{\Sigma_{-,\omega}} |\partial_\nu v|^2|\omega\cdot\nu| d\sigma(x)dt\right]\end{aligned}\ee
	holds true for $s>s_1$, $|\rho|\geq \rho_1(s)$  with $C$  depending only on  $\Omega$, $T$, $k$, $a_0$ and $B_0$.
	Moreover, there exist  $s_2>1$ and, for $s>s_2$,  $\rho_2(s)$ such that for all $v\in\mathcal C^2(\overline{M})$ satisfying the condition \begin{equation}\label{t2c}v_{\vert \Sigma}=0,\quad v_{\vert t=T}=0,\end{equation}
	the estimate
	\bel{lll1b} \begin{aligned}&|\rho|\int_{\Sigma_{-,\omega}} |\partial_\nu v|^2|\omega\cdot\nu| d\sigma(x)dt+s|\rho| \int_\Omega|v|^2(x,0)dx+s^{-1}\int_M|\Delta v|^2dxdt+s\rho^2\int_M|v|^2dxdt\\
		&\leq C\left[\norm{P_{\rho,-,s}v}^2_{L^2(M)}+|\rho|\int_{\Sigma_{+,\omega}} |\partial_\nu v|^2|\omega\cdot\nu| d\sigma(x)dt\right]\end{aligned}\ee
	holds true for $s>s_2$, $|\rho|\geq \rho_2(s)$. Here $s_1$, $\rho_1$, $s_2$ and $\rho_2$ depend only on  $\Omega$, $T$, $k$, $a_0$ and $B_0$. 
\end{prop}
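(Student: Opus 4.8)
The plan is to follow the scheme of \cite[Proposition~3.1]{CK1}, the only genuinely new feature being the $t$-dependence of the principal coefficient $a_0(t)$. I will prove \eqref{lll1a} in detail; \eqref{lll1b} then follows by the same argument applied to $L_-$, or equivalently by the substitution $t\mapsto T-t$, which interchanges $\{t=0\}$ with $\{t=T\}$ and $\Sigma_{+,\omega}$ with $\Sigma_{-,\omega}$ (here $\Sigma_{\pm,\omega}:=\{(t,x)\in\Sigma:\ \pm\,\omega\cdot\nu(x)>0\}$). The first step is to conjugate: from \eqref{phi} one gets $\nabla_x\phi_{+,s}=\frac{\rho-s\,(x+x_1)\cdot\omega}{\sqrt{a_0(t)}}\,\omega$ and $\Delta_x\phi_{+,s}=-\frac{s}{\sqrt{a_0(t)}}$, together with a formula for $\partial_t\phi_{+,s}$ whose leading term is $\rho^2$ and whose other terms carry a factor $a_0'(t)$. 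Expanding $P_{\rho,+,s}v=e^{-\phi_{+,s}}L_+(e^{\phi_{+,s}}v)$, the two $\rho^2$-contributions to the coefficient of $v$—one from $\partial_t\phi_{+,s}$, one from $-a_0(t)|\nabla_x\phi_{+,s}|^2$—cancel identically, which is precisely the purpose of the linear part of the phase.

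Next I would split the remaining operator as $P_{\rho,+,s}v=S_{+,s}v+A_{+,s}v$, with
\[
S_{+,s}v=-a_0(t)\Delta v+q_{+,s}(t,x)\,v,\qquad A_{+,s}v=\partial_tv-2a_0(t)\,\nabla_x\phi_{+,s}\cdot\nabla v+r_{+,s}(t,x)\,v,
\]
where $S_{+,s}$ is formally self-adjoint on $L^2(M)$ (recall $a_0$ depends on $t$ only), $A_{+,s}$ has skew-adjoint principal part, and $q_{+,s}=2\rho s\,(x+x_1)\cdot\omega+\mathcal O(s^2)+\mathcal O(|\rho|)$, $r_{+,s}=\mathcal O(s)$. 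All terms arising when $\partial_t$ hits $a_0(t)^{-1/2}$ are absorbed into the $\mathcal O(\cdot)$'s and, being functions of $(t,x)$ only, do not disturb this decomposition. Note that by \eqref{x0} one has $(x+x_1)\cdot\omega\geq 2$ on $\overline M$, so $|q_{+,s}|\geq c\,s|\rho|$ there once $s$ and $|\rho|$ are large.

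Then, for $v\in\mathcal C^2(\overline M)$ satisfying \eqref{t2a}, I would expand
\[
\norm{P_{\rho,+,s}v}_{L^2(M)}^2=\norm{S_{+,s}v}_{L^2(M)}^2+\norm{A_{+,s}v}_{L^2(M)}^2+2\,\Re\!\int_M S_{+,s}v\,\overline{A_{+,s}v}\,dx\,dt
\]
and integrate by parts in the last term to rewrite it as $\int_M\bigl([S_{+,s},A_{+,s}]v\bigr)\bar v\,dx\,dt$ plus boundary contributions over $\{t=0\}\cup\{t=T\}$ and over $\Sigma$. The conditions \eqref{t2a} annihilate all of these except $s|\rho|\int_\Omega|v(T,x)|^2dx$ (produced by integrating by parts in $t$ the pairing of $\partial_tv$ with the zeroth order part $q_{+,s}v$ of $S_{+,s}v$) and $|\rho|\int_\Sigma|\partial_\nu v|^2(\omega\cdot\nu)\,d\sigma\,dt$ (produced by $\nabla_x\phi_{+,s}\cdot\nu$); with the weight \eqref{phi} these carry exactly the signs displayed in \eqref{lll1a}, the lateral term splitting into a $\Sigma_{+,\omega}$-part (kept on the left) and a $\Sigma_{-,\omega}$-part (moved to the right). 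Computing $[S_{+,s},A_{+,s}]$ and combining it with the $q_{+,s}$- and $r_{+,s}$-contributions from the integrations by parts, one finds that—after the $\rho^2$-cancellation—the leading positive term is $c\,s\rho^2\int_M|v|^2dx\,dt$, generated by the interplay of the quadratic part of $\phi_{+,s}$ (whose second derivative in the direction $\omega$ has absolute value $s/\sqrt{a_0(t)}$) with the linear phase, while the elliptic inequality $\int_M|\nabla v|^2\leq\eps\int_M|\Delta v|^2+C_\eps\int_M|v|^2$ supplies in addition $c\,s^{-1}\int_M|\Delta v|^2dx\,dt$. Every remaining term—those with a smaller power of $s$ or $\rho$, the new commutator term $a_0'(t)\Delta v$ coming from $[-a_0(t)\Delta,\partial_t]$, and the $\tilde B_0$-terms—would be estimated by a small multiple of $s\rho^2\int_M|v|^2+s^{-1}\int_M|\Delta v|^2$ plus lower order, hence absorbed by choosing first $s>s_1$ and then $|\rho|\geq\rho_1(s)$ large enough; collecting what survives yields \eqref{lll1a}.

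I expect the main obstacle to be the bookkeeping forced by the $t$-dependence of $a_0$: one must verify that after the exact $\rho^2$-cancellation all terms it introduces—the $a_0'(t)$-terms in $\partial_t\phi_{+,s}$ and in $J_\pm$ (cf.\ \eqref{j1}--\eqref{j2}), and the commutator term $a_0'(t)\Delta v$—are genuinely of lower order, so that the absorption step goes through; once this is checked, the argument is a direct adaptation of \cite[Proposition~3.1]{CK1}.
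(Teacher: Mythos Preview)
Your proposal is correct and follows exactly the approach the paper itself indicates: the paper does not give a detailed proof of this proposition but simply writes ``Following \cite[Proposition 3.1]{CK1}, we can prove the following Carleman estimate,'' and your sketch carries out precisely that adaptation, with the only new ingredient being the bookkeeping of the $a_0'(t)$-terms, which you have correctly identified as lower order. The decomposition you propose (symmetric plus skew-adjoint plus $\tilde B_0$-perturbation) is the same one the paper writes out explicitly a few lines later in the proof of the subsequent Proposition~\ref{p1}, namely $P_{\rho,-,s}=P_{1,-}+P_{2,-}+P_{3,-}$, so you are on exactly the intended track.
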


We will now apply Proposition \ref{pp1} for deriving two Carleman estimates in Sobolev space of negative order.  In a similar way to \cite{CK1,Ki3}, for all $m\in\R$, we introduce the space $H^m_\rho(\R^{n})$ defined by
\[H^m_\rho(\R^{n})=\{u\in\mathcal S'(\R^{n}):\ (|\xi|^2+\rho^2)^{m\over 2}\hat{u}\in L^2(\R^{n})\},\]
with the norm
\[\norm{u}_{H^m_\rho(\R^{n})}^2=\int_{\R^n}(|\xi|^2+\rho^2)^{m}|\hat{u}(\xi)|^2 d\xi .\]
For all tempered distributions $u\in \mathcal S'(\R^{n})$, we denote here  by $\hat{u}$ the Fourier transform of $u$ which, for $u\in L^1(\R^{n})$, is defined by
$$\hat{u}(\xi):=\mathcal Fu(\xi):= (2\pi)^{-{n\over2}}\int_{\R^{n}}e^{-ix\cdot \xi}u(x)dx.$$
From now on, for $m\in\R$ and $\xi\in \R^n$,  we set $$\left\langle \xi,\rho\right\rangle=(|\xi|^2+\rho^2)^{1\over2}$$
and $\left\langle D_x,\rho\right\rangle^m u$ defined by
\[\left\langle D_x,\rho\right\rangle^m u=\mathcal F^{-1}(\left\langle \xi,\rho\right\rangle^m \mathcal Fu).\]
For $m\in\R$ we define also the class of symbols
\[S^m_\rho=\{c_\rho\in\mathcal C^\infty(\R\times\R^{n}\times\R^{n}):\ |\pd_t^k\pd_x^\alpha\pd_\xi^\beta c_\rho(t,x,\xi)|\leq C_{k,\alpha,\beta}\left\langle \xi,\rho\right\rangle^{m-|\beta|},\  \alpha,\beta\in\mathbb N^n,\ k\in\mathbb N\}.\]
Following \cite[Theorem 18.1.6]{Ho3}, for any $m\in\R$ and $c_\rho\in S^m_\rho$, we define $c_\rho(t,x,D_x)$, with  $D_x=-i\nabla_x$, by
\[c_\rho(t,x,D_x)z(x)=(2\pi)^{-{n\over 2}}\int_{\R^{n}}c_\rho(t,x,\xi)\hat{z}(\xi)e^{ix\cdot \xi} d\xi,\quad z\in\mathcal S(\R^n).\]
For all $m\in\R$, we set also $OpS^m_\rho:=\{c_\rho(t,x,D_x):\ c_\rho\in S^m_\rho\}$.
We fix
$$P_{\rho,\pm}\cdot:=e^{\mp (\rho^2t+\frac{\rho}{\sqrt{a_0(t)}} x\cdot\omega)}L_{\pm}(e^{\pm (\rho^2t+\frac{\rho}{\sqrt{a_0(t)}} x\cdot\omega)}\cdot)$$
and we consider the following Carleman estimate. 

\begin{prop}\label{p1} There exists $\rho_2'>\rho_2$, depending only on $\Omega$, $T$ $k$, $a_0$ and $B_0$,  such that  for all $v\in \mathcal C^1([0,T];\mathcal C^\infty_0(\Omega))$ satisfying $v_{|t=T}=0$ we have 
	\bel{p1a}\norm{v}_{L^2(0,T; H^{1-N_1}_\rho(\R^{n}))}\leq C\norm{P_{\rho,-}v}_{L^2(0,T;H^{-N_1}_\rho(\R^{n}))},\quad |\rho|>\rho_2',\ee
	with $C>0$ depending on $\Omega$, $T$, $k$, $a_0$ and $B_0$. In the same way, for all $y\in \mathcal C^1([0,T];\mathcal C^\infty_0(\Omega))$ satisfying $y_{|t=0}=0$ we have 
	\bel{p1b}\norm{y}_{L^2(0,T; H^{1-N_1}_\rho(\R^{n}))}\leq C\norm{P_{\rho,+}y}_{L^2(0,T;H^{-N_1}_\rho(\R^{n}))},\quad |\rho|>\rho_2',\ee
	with $C>0$ depending on $\Omega$, $T$, $a_0$ and $B_0$.
\end{prop}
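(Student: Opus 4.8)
The plan is to derive the negative-order Sobolev Carleman estimates \eqref{p1a}--\eqref{p1b} from the $L^2$ Carleman estimate of Proposition~\ref{pp1} by a standard conjugation-and-regularization argument using the pseudodifferential calculus set up just above. I will focus on \eqref{p1b} (the argument for \eqref{p1a} is symmetric, using \eqref{lll1b} in place of \eqref{lll1a}, with the roles of $t=0$ and $t=T$ and of $\Sigma_{\pm,\omega}$ interchanged). Given $y\in\mathcal C^1([0,T];\mathcal C^\infty_0(\Omega))$ with $y_{|t=0}=0$, the idea is to apply \eqref{lll1a} not to $y$ itself but to the smoothed function $v:=\langle D_x,\rho\rangle^{-N_1}y$, which is again supported in a fixed neighborhood; since $\mathcal C^\infty_0(\Omega)$ is not preserved by $\langle D_x,\rho\rangle^{-N_1}$ one first extends $y$ by zero to $\R^n$, applies the smoothing operator on $\R^n$, and controls the resulting tails using that the symbol is in $S^{-N_1}_\rho$ and that on the support the weight and its derivatives are bounded — alternatively one inserts a fixed cutoff $\psi\equiv1$ near $\overline\Omega$ and absorbs the commutator. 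Discarding the nonnegative boundary and final-time terms on the left of \eqref{lll1a} and keeping only $s\rho^2\int_M|v|^2$, and absorbing the $|\rho|\int_{\Sigma_{-,\omega}}|\partial_\nu v|^2$ term on the right (it is supported where, after the zero extension, $v$ and its normal derivative are $O(\langle\xi,\rho\rangle^{-\infty})$ in a suitable sense, hence negligible), one obtains an inequality of the form $\rho\,\norm{v}_{L^2(M)}\lesssim \norm{P_{\rho,+,0}v}_{L^2(M)}$, i.e. the $L^2$ estimate with the unperturbed weight ($s$ fixed, or $s=0$; since $P_{\rho,+,s}-P_{\rho,+,0}$ is of lower order in $\rho$ this is harmless for $|\rho|$ large).

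The next step is to translate this $L^2$-in-$x$ bound on $v=\langle D_x,\rho\rangle^{-N_1}y$ into the claimed $H^{1-N_1}_\rho$-bound on $y$. By definition of the norms, $\norm{y}_{L^2(0,T;H^{1-N_1}_\rho(\R^n))}=\norm{\langle D_x,\rho\rangle^{1-N_1}y}_{L^2(M)}$; writing $\langle D_x,\rho\rangle^{1-N_1}y=\langle D_x,\rho\rangle\,v$, I want $\norm{\langle D_x,\rho\rangle v}_{L^2(M)}\lesssim \norm{P_{\rho,+}y}_{L^2(0,T;H^{-N_1}_\rho)}$. The point is that the $L^2$ Carleman estimate in fact controls not just $\rho\norm{v}_{L^2}$ but also, via the $s^{-1}\int_M|\Delta v|^2$ term on the left of \eqref{lll1a} (combined with the $s\rho^2\int_M |v|^2$ term), a full $\langle D_x,\rho\rangle$-elliptic quantity $\norm{\langle D_x,\rho\rangle v}_{L^2(M)}$ — indeed $\norm{\langle D_x,\rho\rangle v}^2_{L^2}\lesssim \norm{\Delta v}^2_{L^2}+\rho^2\norm{v}^2_{L^2}+\norm{\nabla v}_{L^2}^2$ and the gradient term is interpolated between the other two. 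So \eqref{lll1a} already yields $\norm{\langle D_x,\rho\rangle v}_{L^2(M)}\lesssim \norm{P_{\rho,+,0}v}_{L^2(M)}$ (plus negligible boundary contributions). On the right, write $P_{\rho,+,0}v=P_{\rho,+}\langle D_x,\rho\rangle^{-N_1}y=\langle D_x,\rho\rangle^{-N_1}P_{\rho,+}y+[P_{\rho,+},\langle D_x,\rho\rangle^{-N_1}]y$. The first piece has $L^2(M)$ norm equal to $\norm{P_{\rho,+}y}_{L^2(0,T;H^{-N_1}_\rho)}$, which is the desired right-hand side. The commutator is the crux: $P_{\rho,+}=\rho J_+ + L_+$ has $x$-principal symbol of order $2$ but its top-order-in-$\rho$ part $-a_0(t)\Delta$ has constant-in-$x$ coefficients, so $[-a_0\Delta,\langle D_x,\rho\rangle^{-N_1}]=0$; the remaining terms of $P_{\rho,+}$ are $\rho J_+ + (\text{first order in }\partial_x) + \partial_t$, whose commutator with $\langle D_x,\rho\rangle^{-N_1}\in OpS^{-N_1}_\rho$ lies in $\rho\,OpS^{-N_1-1}_\rho + OpS^{-N_1}_\rho$ acting between the relevant spaces, hence maps $y$ into a term bounded by $C(\norm{y}_{L^2(0,T;H^{-N_1}_\rho)}+\rho\norm{y}_{L^2(0,T;H^{-N_1-1}_\rho)})=C\norm{v}_{L^2(M)}\cdot o(1)$ as $|\rho|\to\infty$, which is absorbed into the left-hand side for $|\rho|$ larger than some $\rho_2'$.

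The main obstacle is this last absorption: one must verify carefully that every term produced by conjugating $P_{\rho,+}$ past $\langle D_x,\rho\rangle^{-N_1}$ and by the zero-extension of $y$ is genuinely lower order in the semiclassical parameter $\rho$ than the gain $\langle D_x,\rho\rangle v$ on the left — in particular that the first-order $x$-derivative terms and the coefficient $\rho J_+$ (which is $O(\rho)$ but only first order in $\partial_x$, hence $O(\rho\langle\xi,\rho\rangle^{-N_1})$ against $\langle\xi,\rho\rangle^{1-N_1}$, i.e. a true $o(1)$ relative gain) and the $\partial_t$ term do not spoil the estimate, and that the boundary terms arising because $y$ is only extended by zero (not genuinely in $\mathcal C^\infty_0$ of a neighborhood) are controlled — the latter uses the trace and interior regularity bundled into \eqref{lll1a} together with the Sobolev embedding $H^{N_1}_\rho\hookrightarrow \mathcal C^2$ guaranteed by the choice $N_1=[\tfrac n2]+5$. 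Once these symbol-calculus bookkeeping steps are in place, fixing $s$ at a value $>\max(s_1,s_2)$ and choosing $\rho_2'$ large enough to absorb all the $o(1)$ terms completes the proof of both \eqref{p1a} and \eqline.
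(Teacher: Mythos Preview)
Your overall strategy---smooth by $\langle D_x,\rho\rangle^{-N_1}$, apply the $L^2$ Carleman estimate of Proposition~\ref{pp1}, and control the commutator with the symbol calculus---is exactly the paper's approach. There is, however, one genuine gap in your plan.

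You assert that passing from $P_{\rho,+,s}$ to $P_{\rho,+}=P_{\rho,+,0}$ is harmless ``since $P_{\rho,+,s}-P_{\rho,+,0}$ is of lower order in $\rho$''. This is false: expanding $-a_0|\nabla\phi_{+,s}|^2+\partial_t\phi_{+,s}$ produces the zeroth-order term $2\rho s\,((x+x_1)\cdot\omega)$, which for any fixed $s>0$ is of size $O(\rho)$---exactly the same order as the gain $s^{1/2}\rho\norm{v}_{L^2}$ on the left of \eqref{lll1a}. It therefore cannot be absorbed as a perturbation. The correct way to remove the convexification (and what the paper does, via the reference to \cite{CK1} at the very end) is \emph{conjugation}: one has $P_{\rho,+,s}=e^{g}P_{\rho,+}e^{-g}$ with $g(t,x)=\frac{s}{\sqrt{a_0(t)}}\frac{((x+x_1)\cdot\omega)^2}{2}$, and once $s$ is frozen, multiplication by $e^{\pm g}$ is bounded on every $H^m_\rho$ uniformly in $\rho$, so the estimate transfers directly. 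The paper in fact keeps $s$ live throughout the commutator analysis---splitting $P_{\rho,-,s}=P_{1,-}+P_{2,-}+P_{3,-}$, using the pseudodifferential commutator bound $\norm{K_\rho w}_{L^2}\leq Cs^2\norm{w}_{L^2}$ for the principal part, and then choosing $s$ large to absorb the first-order $\tilde B_0$-contribution $P_{3,-}$---before fixing $s$ and invoking conjugation.

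A secondary point: your first suggestion for the support problem (apply \eqref{lll1a} directly to $v=\langle D_x,\rho\rangle^{-N_1}y$ and argue the boundary traces are negligible) does not work as stated, because Proposition~\ref{pp1} requires $v_{|\Sigma}=0$, which $v$ fails. Your alternative---insert a cutoff $\psi_0\equiv1$ near $\overline\Omega$ and work on a larger domain $\tilde\Omega$---is the right fix and is precisely what the paper does: it proves the intermediate estimate \eqref{p1l} for $w\in\mathcal C^1([0,T];\mathcal C^\infty_0(\tilde\Omega))$ and then sets $w=\psi_0\langle D_x,\rho\rangle^{-N_1}v$.
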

\begin{proof} 
	We will only give the proof of \eqref{p1a}, the proof of \eqref{p1b} being similar.
	We fix $\phi_{\rho,s}$, given by \eqref{phi}, and  we  consider
	$$P_{\rho,-,s}:=e^{-\phi_{-,s}}L_{-}e^{\phi_{-,s}}$$
	and we decompose $P_{\rho,-,s}$ into three terms
	\[P_{\rho,-,s}=P_{1,-}+P_{2,-}+P_{3,-},\]
	with
	$$\begin{aligned}P_{1,-}=&-a_0(t)\Delta+2\rho s((x+x_1)\cdot\omega+s^2((x+x_1)\cdot\omega)^2-\sqrt{a_0(t)}s\\
	&-\frac{a'_0(t)}{2a_0(t)^{\frac{3}{2}}}[\rho\omega\cdot x)-s{((x+ x_1)\cdot\omega)^2\over 2}],\end{aligned}$$
	$$ P_{2,-}=-\pd_t-2\sqrt{a_0(t)}[\rho-s((x+x_1)\cdot\omega)]\omega\cdot\nabla+2s,$$
	\[ P_{3,-}=\tilde{B}_0\cdot\nabla-\frac{(\rho+s((x+x_1)\cdot\omega))}{\sqrt{a_0(t)}}\tilde{B}_0\cdot\omega.\]
	We pick $ \tilde{\Omega}$  a bounded open and smooth set of $\R^n$ such that $\overline{\Omega}\subset\tilde{\Omega}$. In order to prove \eqref{p1a},  we fix $w\in \mathcal C^1([0,T];\mathcal C^\infty_0(\tilde{\Omega}))$ satisfying $w_{|t=T}=0$ and we  consider the quantity
	\[\left\langle D_x,\rho\right\rangle^{-N_1}(P_{1,-}+P_{2,-})\left\langle D_x,\rho\right\rangle^{N_1} w.\]
	In this formula, for any $z\in \mathcal C^1([0,T];\mathcal C^\infty_0(\tilde{\Omega}))$ we define 
	\[\left\langle D_x,\rho\right\rangle^m z(t,x)=\mathcal F^{-1}_x(\left\langle \xi,\rho\right\rangle^m \mathcal F_xz(t,\cdot))(x).\]
	with the partial Fourier transform $\mathcal F_x$  defined by
	$$\mathcal F_xz(t,\xi):= (2\pi)^{-{n\over2}}\int_{\R^{n}}e^{-ix\cdot \xi}z(t,x)dx.$$
	From now on,  $C>0$ denotes a generic constant depending on $\Omega$, $T$, $k$, $a_0$ and $B_0$.
	The properties of composition of pseudo-differential operators (e.g. \cite[Theorem 18.1.8]{Ho3}) implies that
$$\left\langle D_x,\rho\right\rangle^{-N_1}(P_{1,-}+P_{2,-})\left\langle D_x,\rho\right\rangle^{N_1}=P_{1,-}+P_{2,-}+K_\rho(t,x,D_x),$$
	where $K_\rho$ is given by
	\[K_\rho(t,x,\xi)=\nabla_\xi\left\langle \xi,\rho\right\rangle^{-N_1}\cdot D_x(p_{1,-}(t,x,\xi)+p_{2,-}(t,x,\xi))\left\langle \xi,\rho\right\rangle^{N_1}+\underset{\left\langle \xi,\rho\right\rangle\to+\infty}{ o}(1),\]
	with
	$$\begin{aligned}p_{1,-}(t,x,\xi)=&-a_0(t)|\xi|^2+2\rho s((x+x_1)\cdot\omega+s^2((x+x_1)\cdot\omega)^2-\sqrt{a_0(t)}s\\
	&-\frac{a_0'(t)}{2a_0(t)^{\frac{3}{2}}}[\rho\omega\cdot x-s{((x+ x_1)\cdot\omega)^2\over 2}],\end{aligned}$$
	$$ p_{2,-}(t,x,\xi)=-i2\sqrt{a_0(t)}[\rho-s(((x+x_1)\cdot\omega)]\omega\cdot\xi+2s.$$
	Thus, one can check that
	\bel{l2c} \norm{K_\rho(t,x,D_x)w}_{L^2((0,T)\times \R^n)}\leq Cs^2\norm{w}_{L^2((0,T)\times \R^n)}.\ee
	On the other hand,  applying \eqref{lll1b} to $w$ with $M$ replaced by  $\tilde{M}=(0,T)\times\tilde{\Omega}$,  we get
	$$\norm{P_{1,-}w+P_{2,-}w}_{L^2((0,T)\times\R^{n})}\geq C\left(s^{-1/2}\norm{\Delta w}_{L^2((0,T)\times\R^{n})}+s^{1/2}|\rho|\norm{ w}_{L^2((0,T)\times\R^{n})}\right)$$
	and, choosing ${|\rho|\over s^2}$ sufficiently large, it follows
	\bel{l2c'}
	\norm{P_{1,-}w+P_{2,-}w}_{L^2((0,T)\times\R^{n})}\geq Cs^{1/2}\norm{ w}_{L^2(0,T;H^1_\rho(\R^{n}))}.
	\ee
	Combining this estimate with \eqref{l2c}, for ${|\rho|\over s^2}$ sufficiently large, we obtain
	\bel{l2d}\begin{array}{l} \norm{(P_{1,-}+P_{2,-})\left\langle D_x,\rho\right\rangle^{N_1} w}_{L^2(0,T;H^{-N_1}_\rho(\R^{n}))}\\
		=\norm{\left\langle D_x,\rho\right\rangle^{-N_1}(P_{1,-}+P_{2,-})\left\langle D_x,\rho\right\rangle^{N_1} w}_{L^2((0,T)\times \R^n)}\\ \geq  Cs^{1/2}\norm{ w}_{L^2(0,T;H^1_\rho(\R^{n}))}.\end{array}\ee
	Moreover, we have
	\begin{equation}\label{l2f}\begin{aligned}&\norm{P_{3,-}\left\langle D_x,\rho\right\rangle^{N_1} w}_{L^2(0,T;H^{-N_1}_\rho(\R^{n}))}\leq\\
		&\norm{\tilde{B}_0\cdot\nabla \left\langle D_x,\rho\right\rangle^{N_1} w}_{L^2(0,T;H^{-N_1}_\rho(\R^{n}))}+\norm{\frac{(\rho+s((x+x_0)\cdot\omega))\tilde{B}_0\cdot\omega}{\sqrt{a_0(t)}}\left\langle D_x,\rho\right\rangle^{N_1} w}_{L^2(0,T;H^{-N_1}_\rho(\R^{n}))}\\
		&\leq C\norm{\tilde{B}_0}_{W^{1+N_1,\infty}((0,T)\times\R^n)^n}\left(\norm{\nabla \left\langle D_x,\rho\right\rangle^{N_1} w}_{L^2(0,T;H^{-N_1}_\rho(\R^{n}))}+\norm{\left\langle D_x,\rho\right\rangle^{N_1} w}_{L^2(0,T;H^{-N_1}_\rho(\R^{n}))}\right)\\
		&\leq C\norm{\left\langle D_x,\rho\right\rangle^{N_1+1} w}_{L^2(0,T;H^{-N_1}_\rho(\R^{n}))}\\
		&\leq C\norm{ w}_{L^2(0,T;H^{1}_\rho(\R^{n}))}.\end{aligned}
		\end{equation}
	In view of \eqref{l2d}-\eqref{l2f}, we deduce that, fixing $s>1$ sufficiently large, we can find $C>0$ independent of $\rho$ such that
	\bel{p1l}\norm{P_{\rho,-,s}\left\langle D_x,\rho\right\rangle^{N_1} w}_{L^2(0,T;H^{-N_1}_\rho(\R^{n}))}\geq C\norm{w}_{L^2(0,T;H^1_\rho(\R^{n}))}.\ee
	Let $\psi_0\in\mathcal C^\infty_0(\tilde{\Omega})$ be such that $\psi_0=1$ on $\overline{\Omega_1}$, with  $\Omega_1$ an open neighborhood of $\overline{\Omega}$ such that $\overline{\Omega_1}\subset\tilde{\Omega}$. We fix $w(t,x)=\psi_0(x) \left\langle D_x,\rho\right\rangle^{-N_1} v(t,x)$ and  repeating the arguments used at the end of the proof of \cite[Proposition 4.1.]{CK1}, we deduce that \eqref{p1l} implies \eqref{p1a}.\end{proof}

Applying the estimate \eqref{p1a}-\eqref{p1b}, we are in position to complete the construction of the remainder terms $R_{\pm,\rho}$, satisfying the decay property \eqref{GO17}. For this purpose, we recall that $P_{\rho,\pm}=L_{\pm}+\rho J_\pm$ with $L_\pm$ and $J_\pm$ defined by \eqref{l}-\eqref{j2}. Then,  according to \eqref{GO9}-\eqref{GO10}, we have 
$$J_+c_{+,0}=0,\quad J_+c_{+,\ell+1}=-L_+c_{1,\ell},\quad \ell=1,\ldots N_1-1.$$
It follows
$$\begin{aligned}&L_+\left[e^{\rho^2t+\frac{\rho}{\sqrt{a_0(t)}}x\cdot\omega}\left(\sum_{\ell=0}^{N_1}c_{+,\ell}\rho^{-\ell}\right)\right]\\
&=e^{\rho^2t+\frac{\rho}{\sqrt{a_0(t)}}x\cdot\omega}P_{\rho,+}\left(\sum_{\ell=0}^{N_1}c_{+,\ell}\rho^{-\ell}\right)=e^{\rho^2t+\frac{\rho}{\sqrt{a_0(t)}}x\cdot\omega}\rho^{-N_1}L_{+}c_{+,N_1}.\end{aligned}$$
Therefore, the condition $L_{+}w_1=0$ is  fulfilled if and only if $R_{+,\rho}$
solves
$$P_{\rho,+}R_{+,\rho}(t,x)=-\rho^{-N_1}L_+c_{+,N_1}(t,x),\quad (t,x)\in(0,T)\times\Omega.$$
Thus, fixing $\phi_1\in\mathcal C^\infty_0(\R^n)$, such that $\phi_1=1$ on $\overline{\Omega}$, and
$$F(t,x)=-\phi_1(x)L_+c_{+,N_1}(t,x)$$
we can consider $R_{+,\rho}$ as a solution of 
\bel{CGO12}P_{\rho,+}R_{+,\rho}(t,x)=\rho^{-N_1}F(t,x),\quad (t,x)\in(0,T)\times\Omega.\ee
We fix $\tilde{\Omega}$ a smooth bounded open set  of $\R^n$ such that $\overline{\Omega}\subset \tilde{\Omega}$.
Applying the Carleman estimate \eqref{p1a},  we define the linear form $\mathcal G_\rho$ on $\{P_{\rho,-}z:\ z\in\mathcal C^1([0,T];\mathcal C^\infty_0(\tilde{\Omega})),\ z_{|t=T}=0\}$, considered as a subspace of $L^2(0,T;H^{-N_1}_\rho(\R^{n}))$ by
\[\mathcal K_\rho(P_{\rho,-}z)=\rho^{-N_1}\left\langle F,z\right\rangle_{L^2((0,T)\times\R^n))},\quad z\in\mathcal C^1([0,T];\mathcal C^\infty_0(\tilde{\Omega})),\ z_{|t=T}=0.\]
Then, \eqref{p1a} implies that, for all $z\in\mathcal C^1([0,T];\mathcal C^\infty_0(\tilde{\Omega}))$ satisfying $z_{|t=T}=0$, we have
\[\begin{aligned}|\mathcal K_\rho(P_{\rho,-}z)|&\leq \rho^{-N_1}\norm{F}_{L^2(0,T;H^{N_1-1}_\rho(\R^n))}\norm{z}_{L^2(0,T;H^{1-N_1}_\rho(\R^n))}\\
\ &\leq C\rho^{-1}\norm{c_{+,N_1}}_{H^1(0,T;H^{N_1+1}(\tilde{\Omega}))}\norm{P_{\rho,-}z}_{L^2(0,T;H^{-N_1}_\rho(\R^{n}))}.\end{aligned}\]
Thus, by the Hahn--Banach theorem we can extend $\mathcal K_\rho$ to a continuous linear form on $L^2(0,T;H^{-N_1}_\rho(\R^{n}))$ still denoted by $\mathcal K_\rho$ and satisfying 
$$\norm{\mathcal K_\rho}\leq C\norm{c_{+,N_1}}_{H^1(0,T;H^{N_1+1}(\tilde{\Omega}))}|\rho|^{-1}.$$ 
Therefore, there exists $R_{+,\rho}\in L^2(0,T;H^{N_1}_\rho(\R^{n}))$ such that 
\[\left\langle h,R_{+,\rho}\right\rangle_{L^2(0,T;H^{-N_1}_\rho(\R^{n})),L^2(0,T;H^{N_1}_\rho(\R^{n}))}=\mathcal K_\rho(h),\quad h\in L^2(0,T;H^{-N_1}_\rho(\R^{n})).\]
Fixing $h=P_{\rho,-}z$ with $z\in \mathcal C^\infty_0(M)$, we deduce that $R_{+,\rho}$ satisfying $P_{\rho,+}R_{+,\rho}=\rho^{-N_1}F$ in $M$. In addition, using the fact that
$$\partial_tR_{+,\rho}=-(P_{\rho,+}R_{+,\rho}-\partial_tR_{+,\rho})+\rho^{-N_1}F\in L^2(0,T;H^{N_1-2}(\Omega)),$$
we obtain $R_{+,\rho}\in H^1(0,T;H^{N_1-2}(\Omega))$. Moreover, fixing $h=P_{\rho,-}z$ with $z\in\mathcal C^\infty([0,T];\mathcal C^\infty_0(\tilde{\Omega}))$, $z_{|t=T}=0$ and allowing $z_{|t=0}$ to be arbitrary proves that $R_{+,\rho}(0,x)=0$ for  $x\in\Omega$. Therefore, $R_{+,\rho}$ fulfills condition \eqref{GO17} and, combining this with \eqref{GO16}, we deduce that $w_1$ given by \eqref{GO7} is lying in $H^1(0,T;H^{N_1-2}(\Omega))$ and it satisfies the condition \eqref{eqGO1sm}. In order to complete the construction of $w_1$, we only need to prove that $R_{+,\rho}$ satisfies the decay property \eqref{GO17}. For this purpose, applying the Sobolev embedding theorem we get
$$\begin{aligned}\norm{R_{+,\rho}}_{L^\infty(0,T;C^2(\overline{\Omega}))}&\leq C\norm{R_{+,\rho}}_{H^1(0,T;H^{N_1-2}(\Omega))}\\
&\leq C(\norm{R_{+,\rho}}_{L^2(0,T;H^{N_1}(\Omega))}+\norm{\rho^{-N_1}F}_{L^2(0,T;H^{N_1-2}(\Omega))})\\
\ &\leq C(\norm{R_{+,\rho}}_{L^2(0,T;H^{N_1}_\rho(\R^n))}+\rho^{-1})\leq C(\norm{\mathcal K_\rho}+|\rho|^{-1})=C|\rho|^{-1}.\end{aligned}$$
This proves that $R_{+,\rho}$ fulfills the decay \eqref{GO17}. Using similar arguments we can build $w_2$ given by \eqref{GO8} with $R_{-,\rho}$ satisfies the decay property \eqref{GO17}.

\section{Proof of Proposition~\ref{prop1}}

\subsection{Proof of Proposition~\ref{prop1} in the case $m=1$}
In the case $m=1$, the tensor $Q=(Q^1,\dots,Q^n)$ is a vector. In this case the integral identity in the statement of the proposition reduces to 
\bel{m1_eq}
\int_{M} \left[(Q\cdot \nabla v_1)(B_0\cdot \nabla v_2)+(Q\cdot \nabla v_2)(B_0\cdot \nabla v_1) \right]\,v_3\,dt\,dx=0,
\ee
where $M=(0,T)\times \Omega$ and $v_{1},v_2\in H^1(0,T;\mathcal C^2(\overline\Omega))$ are any solutions to \eqref{eqGO1sm} and $v_{3}\in H^1(0,T;\mathcal C^2(\overline\Omega))$ is any solution to \eqref{eqGO2sm}. 

Let us fix $t_0\in (0,T)$ and $x_0\in \Omega$. We will prove the proposition by showing that $Q\otimes B_0$ vanishes at $(t_0,x_0)$. Let $\omega_1,\omega_2 \in \mathbb S^{n-1}$ be unit vectors that satisfy 
\bel{omega_ortho}
\omega_1\cdot\omega_2=0.
\ee
We start by defining, for any $\rho>1$, the functions 
$$v_1(t,x)=U^{(1)}_{+,\rho}(t,x),\quad \text{and}\quad v_2(t,x)=U^{(2)}_{+,\rho}(t,x),\quad \text{for all $(t,x)\in M$},$$
where, following the construction in Section~\ref{sec_smoothgo}, $U^{(j)}_{+,\rho}$, $j=1,2$ is the canonical GO solution to \eqref{eqGO1sm}, given by \eqref{GO7} with $\omega=\omega_j$, that concentrates on the ray passing through the point $x_0$ in the direction $\omega_j$.
Next, we define the unit vector
\bel{omega_3}
\omega_3 = \frac{1}{\sqrt{2}}(\omega_1+\omega_2)\in \mathbb S^{n-1},
\ee
and set
$$ v_3(t,x)= U^{(3)}_{-,\sqrt{2}\rho}(t,x),\quad \text{for all $(t,x)\in M$},$$
to be the canonical GO solution to \eqref{eqGO2sm}, given by \eqref{GO8} with $\omega=\omega_3$, that concentrates on the ray passing through the point $x_0$ in the direction $\omega_3$. Recall that for $j=1,2$,
$$ v_j=U^{(j)}_{+,\rho}= e^{\rho^2t+\frac{\rho}{\sqrt{a_0(t)}}x\cdot\omega_j}(V^{(j)}_{+,\rho}+R_{+,\rho}^{(j)}),$$
and that
$$v_3=U^{(3)}_{-,\sqrt{2}\rho}= e^{-2\rho^2t-\frac{\sqrt{2}\rho}{\sqrt{a_0(t)}}x\cdot\omega_3}(V^{(3)}_{-,\sqrt{2}\rho}+R^{(3)}_{-,{\sqrt{2}\rho}}).
$$
Next, let us write
$$S_{\rho}= \int_{M} \left[(Q\cdot \nabla v_1)(B_0\cdot \nabla v_2)+(Q\cdot \nabla v_2)(B_0\cdot \nabla v_1) \right]\,v_3\,dt\,dx,$$
with $v_1$, $v_2$ and $v_3$ as chosen above and note that $S_\rho=0$ by \eqref{m1_eq}. On the other hand, observe by applying \eqref{omega_3} that the exponential terms in the expression for the product $v_1v_2v_3$ cancel out. The same principle also holds for products
$$(Q\cdot \nabla v_1)(B_0\cdot \nabla v_2)v_3 \quad\text{and}\quad (Q\cdot \nabla v_2)(B_0\cdot \nabla v_1)v_3.$$ 
Using this observation together with the expressions \eqref{GO7}--\eqref{GO8} and the error estimate \eqref{GO17} it follows that
\bel{S_eq_lim}
0=\lim_{\rho\to\infty} \rho^{-2}S_\rho=\int_M a_0^{-1}\left[(\omega_1\cdot Q)(\omega_2\cdot B_0)+(\omega_2\cdot Q)(\omega_1\cdot B_0)\right]\,c_{+,0}^{(1)}\,c_{+,0}^{(2)}\,c_{-,0}^{(3)}\,dt\,dx.
\ee
We recall that $c^{(j)}_{\pm,0}$, $j=1,2,3,$ are as given by
$$c^{(j)}_{\pm,0}(t,x)=\zeta(t)\,e_\pm^{(j)}(t,x)\,d^{(j)}(t,x),\quad \forall \, (t,x)\in M,$$
where $e_{\pm}^{(j)}$ are strictly positive functions defined by \eqref{GO13} with $\omega=\omega_{j}$ and $d^{(j)}$ are as given by
$$ 
d^{(j)}(t,x)=\prod_{k=1}^{n-1} \chi_0\left(\frac{(x-x_0)\cdot\alpha_k^{(j)}}{\delta}\right),
$$
 where $\alpha_1^{(j)},\ldots,\alpha_{n-1}^{(j)}$ are unit vectors such that $\{\omega_{j},\alpha_1^{(j)},\ldots,\alpha_{n-1}^{(j)}\}$ forms an orthonormal basis of $\R^n$. Next, we set
 \bel{zeta_def_chi}
 \zeta(t) =\chi_0\left(\frac{t-t_0}{\delta}\right),
 \ee
 and assume that $\delta\in (0,1)$ is sufficiently small so that $\delta<\max\{t_0,T-t_0\}$.
 As the unit vectors $\omega_1$ and $\omega_2$ are orthogonal, it is straightforward to see that  the product 
 $$c_{+,0}^{(1)}(t,x)\,c_{+,0}^{(2)}(t,x)\,c_{-,0}^{(3)}(t,x)$$
 is supported in a $\sqrt{3}\,\delta$ neighborhood of the point $(t_0,x_0)$. As the functions $e_{\pm}^{(j)}$, $j=1,2,3$ are positive, it follows that given any continuous function $f$ on $M$ there holds
 $$ \lim_{\delta\to 0} \delta^{-(n+1)}\int_{M}f(t,x)\,c_{+,0}^{(1)}(t,x)\,c_{+,0}^{(2)}(t,x)\,c_{-,0}^{(3)}(t,x)\,dt\,dx=C_0f(t_0,x_0),$$
 for some non-zero constant $C_0$ that depends only on $M$, $a_0$ and $B_0$. Thus, by multiplying the right hand side of equation \eqref{S_eq_lim} with $\delta^{-(n+1)}$ and taking the limit as $\delta$ approaches zero we deduce that
\bel{m_1_den}
(\omega_1\cdot Q(t_0,x_0))\,(\omega_2\cdot B_0(t_0,x_0))+(\omega_2\cdot Q(t_0,x_0))\,(\omega_1\cdot B_0(t_0,x_0))=0,
\ee
for any pair of orthogonal unit vectors $\omega_1$ and $\omega_2$ in $\R^n$.

Note that if $B_0(t_0,x_0)=0$, there is nothing to prove. So we assume that $B_0(t_0,x_0)$ is a non-zero vector. Let
\bel{xi_def}
\xi = |B_0(t_0,x_0)|^{-1}\,B_0(t_0,x_0)\in \mathbb S^{n-1},
\ee
and let $\omega \in \mathbb S^{n-1}$ be orthogonal to $\xi$. Setting $\omega_1=\xi$ and $\omega_2=\omega$, it follows from \eqref{m_1_den} that
$$ \omega\cdot Q(t_0,x_0)=0\quad \text{for any $\omega \in \mathbb S^{n-1}$ with $\omega\cdot\xi=0$}.$$
Therefore the vectors $Q(t_0,x_0)$ and $B(t_0,x_0)$ are co-linear. Thus, the two terms in equation \eqref{m_1_den} are identical implying that 
$$ (\omega_1\cdot Q(t_0,x_0))\,(\omega_2\cdot B_0(t_0,x_0))=0,$$
for any pair of orthogonal unit vectors $\omega_1$ and $\omega_2$ in $\R^n$. As $B_0(t_0,x_0)\neq 0$, it is straightforward to conclude that $Q(t_0,x_0)=0.$ This concludes the proof in the case $m=1$.

\subsection{Proof of Proposition~\ref{prop1} in the case $m\geq 2$}
 
Let us fix $(t_0,x_0)\in M$ and let $\omega_1,\omega_2 \in \mathbb S^{n-1}$ satisfy
\bel{omega_cond}
\omega_1\cdot \omega_2 \notin \{-1,0,1\}.
\ee
Next, let 
$$\kappa= -\frac{m-1}{2\,\omega_1\cdot\omega_2},\quad \text{and}\quad \tilde\kappa= \sqrt{m+\kappa^2},$$
and define the unit vector $\omega_3 \in \mathbb S^{n-1}$ via
$$\omega_3=\frac{m\,\omega_1+\kappa\,\omega_2}{\tilde\kappa}.$$
Let us define for each $\rho>1$, the functions
$$ v_1=\ldots=v_m=U_{+,\rho}^{(1)}=e^{\rho^2t+\frac{\rho}{\sqrt{a_0(t)}}x\cdot\omega_1}(V^{(1)}_{+,\rho}+R_{+,\rho}^{(1)}),$$
to be the canonical GO solutions to \eqref{eqGO1sm} constructed in Section~\ref{sec_smoothgo} that concentrate along the ray in $\Omega$ passing through the point $x_0$ in the direction $\omega_1$. We set
$$v_{m+1}=U_{+,\kappa\rho}^{(2)}=e^{\kappa^2\rho^2t+\frac{\kappa\rho}{\sqrt{a_0(t)}}x\cdot\omega_2}(V^{(2)}_{+,\kappa\rho}+R_{+,\kappa\rho}^{(2)}),$$
to be the canonical GO solution to \eqref{eqGO1sm} constructed in Section~\ref{sec_smoothgo} that concentrates along the ray in $\Omega$ passing through the point $x_0$ in the direction $\omega_2$ and finally
$$v_{m+2}=U^{(3)}_{-,\tilde\kappa\rho}=e^{-\tilde\kappa^2\rho^2t-\frac{\tilde\kappa\rho}{\sqrt{a_0(t)}}x\cdot\omega_3}(V^{(3)}_{-,\tilde\kappa\rho}+R_{-,\tilde\kappa\rho}^{(3)}),$$
to be the canonical GO solution to \eqref{eqGO2sm} constructed in Section~\ref{sec_smoothgo} that concentrates along the ray in $\Omega$ passing through the point $x_0$ in the direction $\omega_3$.  Let
\bel{S_rho}
S_{\rho}=\sum_{\ell \in \pi(m+1)}\int_{(0,T)\times \Omega}\left(\sum_{j_1,\ldots,j_m=1}^nQ^{j_1,\ldots,j_m}\partial_{j_1}v_{\ell_1}\ldots \partial_{j_m}v_{\ell_m}\right) (B_0\cdot \nabla v_{\ell_{m+1}})\,v_{m+2}\,dt\,dx,
\ee
with $v_1,\ldots, v_{m+2}$ as chosen above and note that $S_\rho=0$ by the hypothesis of the proposition. On the other hand, in view of the definitions of $\kappa$, $\tilde \kappa$ and $\omega_3$, it follows that the exponential terms in the expression for the product $v_1\,v_2\ldots v_{m+2}$ cancel out. The same principle also holds for products
$$ Q^{j_1\ldots j_m} \partial_{j_1} v_{\ell_1}\, \partial_{j_2}v_{\ell_2}\ldots \partial_{j_{m}}v_{\ell_m} (B_0\cdot \nabla v_{\ell_{m+1}})v_{m+2},$$
for any $\ell \in \pi(m+1)$ and any $j_1,\ldots,j_m=1,\ldots,n$. Thus, by recalling the expressions \eqref{GO7}--\eqref{GO8} and the error estimate \eqref{GO17} it also follows that
\bel{S_eq_lim_final}
0=\lim_{\rho\to\infty} \rho^{-(m+1)}S_\rho=m!\,\kappa\int_M a_0(t)^{-\frac{m+1}{2}} K_{\omega_1,\omega_2}(t,x)\,F(t,x)\,dt\,dx,
\ee
where the scalar function $K_{\omega_1,\omega_2}\in \mathcal C(\overline M)$ is given by the expression
\[
K_{\omega_1,\omega_2} = m\, Q(\underbrace{\omega_1,\ldots,\omega_1}_{\text{$m-1$ times}},\omega_2)(B_0\cdot \omega_1)+Q(\omega_1,\ldots,\omega_1)(B_0\cdot \omega_2).
\]
and the smooth function $F\in \mathcal C^{\infty}(\overline M)$ is defined by
$$ F(t,x) = (c^{(1)}_{+,0}(t,x))^{m}\, c^{(2)}_{+,0}(t,x)\,c_{-,0}^{(3)}(t,x).$$
Recall that $c^{(j)}_{\pm,0}$, $j=1,2,3$ are given by
$$c^{(j)}_{\pm,0}(t,x)=\zeta(t)\,e_\pm^{(j)}(t,x)\,d^{(j)}(t,x),\quad \forall \, (t,x)\in M,$$
where $e_{\pm}^{(j)}$ are strictly positive functions defined by \eqref{GO13} with $\omega=\omega_{j}$ and the functions $d^{(j)}$, $j=1,2,3$ are given by
$$ 
d^{(j)}(t,x)=\prod_{k=1}^{n-1} \chi_0\left(\frac{(x-x_0)\cdot\alpha_k^{(j)}}{\delta}\right),
$$
 where $\alpha_1^{(j)},\ldots,\alpha_{n-1}^{(j)}$ are unit vectors such that $\{\omega_{j},\alpha_1^{(j)},\ldots,\alpha_{n-1}^{(j)}\}$ forms an orthonormal basis of $\R^n$. Next and analogously to the previous section, we define $\zeta(t)$ via \eqref{zeta_def_chi}. Recall from \eqref{omega_cond} that $\omega_1\neq \pm\omega_2$. Therefore, we have $$\textrm{Span}\,\{\alpha_1^{(1)},\ldots,\alpha_{n-1}^{(1)},\alpha_1^{(2)},\ldots,\alpha_{n-1}^{(2)}\}=\R^n$$ 
 and thus the function $F$ is supported in a $2\sqrt{n}\,\delta$ neighborhood of the point $(t_0,x_0)$. As the functions $e_{\pm}^{(j)}$, $j=1,2,3$ are positive, it follows analogously to the previous section that given any $f\in \mathcal C(\overline M)$, there holds
 $$\lim_{\delta\to 0}\delta^{-(n+1)}\int_M f(t,x) \,F(t,x)\,dt\,dx=C_0\,f(t_0,x_0),$$
 for some non-zero $C_0$ only depending on $M$, $a_0$ and $B_0$. Hence, by multiplying the right hand side of equation \eqref{S_eq_lim_final} with $\delta^{-(n+1)}$ and taking the limit as $\delta$ approaches zero we deduce that $K_{\omega_1,\omega_2}(t_0,x_0)=0$ for any pair of $\omega_1,\omega_2$ that satisfy \eqref{omega_cond}. In fact since $K_{\omega_1,\omega_2}$ depends continuously on $\omega_1$ and $\omega_2$ we can deduce, by continuity, that $K_{\omega_1,\omega_2}(t_0,x_0)=0$ for all unit vectors $\omega_1,\omega_2$ (that is to say, including the cases $\omega_1\cdot\omega_2\in\{\pm 1,0\}$).  In other words, given any $(t_0,x_0)\in M$, and any pair of unit vectors $\omega_1$, $\omega_2$, there holds
\bel{m_2_den}
 m\, Q(\underbrace{\omega_1,\ldots,\omega_1}_{\text{$m-1$ times}},\omega_2)(B_0\cdot \omega_1)+Q(\omega_1,\ldots,\omega_1)(B_0\cdot \omega_2)=0,
\ee
where the left hand side expression is evaluated at the point $(t_0,x_0)\in M$. 

\subsubsection{The case $m=2$}
When $m=2$, equation \eqref{m_2_den} is sufficient to deduce that $Q\otimes B_0$ must vanish at the point $(t_0,x_0)$. To show this, we may assume without loss of generality that $B_0(t_0,x_0)\neq 0$. Let the unit vector $\xi$ be defined by \eqref{xi_def}. Applying \eqref{m_2_den} with $\omega_1=\omega_2=\xi$, it follows that 
$$Q(\xi,\xi)=0 \quad \text{at $(t_0,x_0)$.}$$
Applying \eqref{m_2_den} with $\omega_1=\xi$ and $\omega_2=\omega$ any unit vector orthogonal to $\xi$ it follows that
$$Q(\xi,\omega)=0\quad \text{at $(t_0,x_0)$}.$$
Finally, applying \eqref{m_2_den} with $\omega_1=\omega$ any unit vector orthogonal to $\xi$ and $\omega_2=\xi$, it follows that
$$Q(\omega,\omega)=0\quad \text{at $(t_0,x_0)$}.$$
Together with the symmetry of $Q$, it follows immediately from the last three identities that $Q(t_0,x_0)=0$. This completes the proof of the proposition in the case $m=2$ since $(t_0,x_0)\in M$ is arbitrary.

\subsubsection{The case $m\geq 3$}
We keep the identity \eqref{m_2_den} for now and return to the statement of the proposition and define an alternative choice for the test functions $v_1,\ldots,v_{m+2}$. To this end, let us fix $s \in \{2,\ldots,m-1\}$ and define the positive number $\hat{\kappa}$ (depending on the value of $s$) by
\bel{kappa_def}
\hat{\kappa}=\sqrt{\frac{(s-1)^2+(s-1)}{(m+1-s)^2-(m+1-s)}}.
\ee
We let $\rho>1$ and set
$$ v_1=v_2=\ldots=v_{s-1} = U_{+,\rho}^{(1)}=e^{\rho^2t+\frac{\rho}{\sqrt{a_0(t)}}x\cdot\omega_1}(V^{(1)}_{+,\rho}+R_{+,\rho}^{(1)}),$$
and 
$$ v_s=U_{+,-(s-1)\rho}^{(1)}=e^{(s-1)^2\rho^2t-\frac{(s-1)\rho}{\sqrt{a_0(t)}}x\cdot\omega_1}(V^{(1)}_{+,-(s-1)\rho}+R_{+,-(s-1)\rho}^{(1)})$$
to be the canonical GO solutions to \eqref{eqGO1sm} constructed in Section~\ref{sec_smoothgo} that concentrate along the ray in $\Omega$ passing through the point $x_0$ in the direction $\omega_1$. Next, we define
$$ v_{s+1}=\ldots=v_{m+1} = U_{+,\hat\kappa\rho}^{(2)}=e^{\hat\kappa^2\rho^2t+\frac{\hat\kappa\rho}{\sqrt{a_0(t)}}x\cdot\omega_2}(V^{(2)}_{+,\hat\kappa\rho}+R_{+,\hat\kappa\rho}^{(2)}),$$
to be the canonical GO solutions to \eqref{eqGO1sm} constructed in Section~\ref{sec_smoothgo} that concentrate along the ray in $\Omega$ passing through the point $x_0$ in the direction $\omega_2$. Finally, we define
$$v_{m+2}=U_{-,\hat\kappa(m-s+1)\rho}^{(2)}=e^{-\hat\kappa^2(m-s+1)^2\rho^2t-\frac{\hat\kappa(m-s+1)\rho}{\sqrt{a_0(t)}}x\cdot\omega_2}(V^{(2)}_{-,\hat\kappa(m-s+1)\rho}+R_{-,\hat\kappa(m-s+1)\rho}^{(2)}).$$
to be the canonical GO solution to \eqref{eqGO2sm} constructed in Section~\ref{sec_smoothgo} that concentrates along the ray in $\Omega$ passing through the point $x_0$ in the direction $\omega_2$. 

In view of \eqref{kappa_def} it follows that the exponential terms in the expression for the product $v_1\,v_2\ldots v_{m+2}$ cancel out. The same principle also holds for products
$$ Q^{j_1\ldots j_m} \partial_{j_1} v_{\ell_1}\, \partial_{j_2}v_{\ell_2}\ldots \partial_{j_{m}}v_{\ell_m} (B_0\cdot \nabla v_{\ell_{m+1}})v_{m+2},$$
for any $\ell \in \pi(m+1)$ and any $j_1,\ldots,j_m=1,\ldots,n$. Thus, defining $S_\rho$ analogously to \eqref{S_rho} corresponding to the current choice of the test functions $v_1,\ldots,v_{m+2}$, it follows from \eqref{GO7}--\eqref{GO8} and remainder estimates \eqref{GO17} that 
\bel{S_limit_1}
0=\lim_{\rho\to\infty}\rho^{-(m+1)}S_\rho=
-m!\,(s-1)\hat{\kappa}^{m-s+1}\,\int_M a_0(t)^{-\frac{m+1}{2}}\,K_{s,\omega_1,\omega_2}(t,x)F(t,x)\,dt\,dx,
\ee
where 
$$K_{s,\omega_1,\omega_2}= s\, Q(\underbrace{\omega_1,\ldots,\omega_1}_{\text{$s-1$ times}},\underbrace{\omega_2,\ldots,\omega_2}_{\text{$m+1-s$ times}})(B_0\cdot \omega_1)+(m-s+1)Q(\underbrace{\omega_1,\ldots,\omega_1}_{\text{$s$ times}},\underbrace{\omega_2,\ldots,\omega_2}_{\text{$m-s$ times}})(B_0\cdot \omega_2),$$
and 
$$F(t,x)=(c^{(1)}_{+,0}(t,x))^{s}\,(c^{(2)}_{+,0}(t,x))^{m-s+1}\,c^{(2)}_{-,0}(t,x).$$
Analogously to the previous section we set $\zeta$ as in \eqref{zeta_def_chi} and multiply the right hand side of \eqref{S_limit_1} with $\delta^{-(n+1)}$ and take the limit $\delta\to 0$ to deduce that given any $s=2,\ldots,m-1$, any $(t_0,x_0)\in M$, and any pair of unit vectors $\omega_1$, $\omega_2$, there holds
\bel{m_3_den}
s\, Q(\underbrace{\omega_1,\ldots,\omega_1}_{\text{$s-1$ times}},\underbrace{\omega_2,\ldots,\omega_2}_{\text{$m+1-s$ times}})(B_0\cdot \omega_1)+(m-s+1)Q(\underbrace{\omega_1,\ldots,\omega_1}_{\text{$s$ times}},\underbrace{\omega_2,\ldots,\omega_2}_{\text{$m-s$ times}})(B_0\cdot \omega_2)=0,
\ee
where the left hand side expression is evaluated at the point $(t_0,x_0)$. Combining \eqref{m_3_den} with \eqref{m_2_den} we deduce that \eqref{m_3_den} actually holds for all $s=2,3,\ldots,m$. 

In order to conclude the proof of the proposition when $m\geq 3$, we begin by fixing $(t_0,x_0)\in M$ and proceed to prove that $Q\otimes B_0$ vanishes at $(t_0,x_0)$. Observe that if $B_0(t_0,x_0)$ is zero then the claim is trivial, so we will make the standing assumption that $B_0(t_0,x_0)$ is a non-zero vector and aim to prove that $Q(t_0,x_0)$ is the zero tensor. Let us define the unit vector $\xi$ by \eqref{xi_def} and return to the identity \eqref{m_3_den} evaluated at the point $(t_0,x_0)$. Setting $s=2$, $\omega_1=\omega_2=\xi$ in \eqref{m_3_den} it follows that
 \bel{final_0}
 Q(\xi,\xi,\ldots,\xi)=0,\quad \text{at $(t_0,x_0)$}.
 \ee
Next, setting $s=2,\ldots,m$ (Recall that we can also set $s=m$, thanks to \eqref{m_2_den}), $\omega_1=\xi$ and $\omega_2=\omega$ any unit vector orthogonal to $\xi$, it follows from \eqref{m_3_den} that
\bel{final_1}
Q(\underbrace{\xi,\ldots,\xi}_{\text{$s-1$ times}},\underbrace{\omega,\ldots,\omega}_{\text{$m+1-s$ times}})=0,\quad \text{at $(t_0,x_0)$},
\ee
for all $\omega\in \mathbb S^{n-1}$ that satisfies $\omega\cdot \xi=0$ and any $s=2,\ldots,m$. Finally, returning to \eqref{m_3_den} again and plugging $s=m$, $\omega_1=\omega$ any unit vector orthogonal to $\xi$ and $\omega_2=\xi$ it follows that
\bel{final_2}
Q(\omega,\omega,\ldots,\omega)=0,\quad \text{at $(t_0,x_0)$},
\ee
for all $\omega\in \mathbb S^{n-1}$ that satisfies $\omega\cdot \xi=0$. It is clear from \eqref{final_0} and \eqref{final_1}--\eqref{final_2} together with symmetry of $Q$ that the tensor $Q$ must vanish at the point $(t_0,x_0)$, thus concluding the proof.

\section*{Acknowledgments}
A.F acknowledges support from the Fields institute for research in mathematical sciences. The work of Y.K. is partially supported by  the French National Research Agency ANR (project MultiOnde) grant ANR-17-CE40-0029.  The research of G.U. is partially supported by NSF, a Walker Professorship at UW and a Si-Yuan Professorship at IAS, HKUST.

\bigskip
\vskip 1cm

\end{document}